\documentclass[11 pt, reqno]{amsart}
\usepackage{amssymb,amsmath}
\usepackage{hyperref, cite}
\usepackage{epsfig}
\newtheorem{theorem}{Theorem}[section]
\newtheorem{corollary}[theorem]{Corollary}

\newtheorem{proposition}[theorem]{Proposition}
\theoremstyle{definition}

\newtheorem{question}[theorem]{Question}
\newtheorem{problem}[theorem]{Problem}
\newtheorem{definition}[theorem]{Definition}
\newtheorem{example}[theorem]{Example}
\newtheorem{remark}[theorem]{Remark}
\newtheorem{notation}[theorem]{Notation}
\numberwithin{equation}{subsection}
\newtheorem*{ack}{Acknowledgement}
\usepackage[all,cmtip]{xy}
\usepackage{float}
\usepackage{multicol}

\setlength\parindent{0pt}

\newcommand{\Aut}{\operatorname{Aut}}

\newcommand{\RNum}[1]{\uppercase\expandafter{\romannumeral #1\relax}}
\setlength\oddsidemargin{.8mm}
\setlength\evensidemargin{.8mm}
\setlength\textheight{21cm}
\setlength\textwidth{16cm}
\usepackage[autostyle]{csquotes}
\usepackage{color}

\begin{document}
\title{Virtually symmetric representations and marked Gauss diagrams}

\author{Valeriy G. Bardakov}
\address{Sobolev Institute of Mathematics, 4 Acad. Koptyug avenue, 630090, Novosibirsk, Russia.}
\address{Novosibirsk State  University, 2 Pirogova Street, 630090, Novosibirsk, Russia.}
\address{Novosibirsk State Agrarian University, Dobrolyubova street, 160, Novosibirsk, 630039, Russia.}
\address{Regional Scientific and Educational Mathematical Center of Tomsk State University, 36 Lenin Ave., Tomsk, Russia.}
\email{bardakov@math.nsc.ru}

\author{Mikhail V. Neshchadim}
\address{Sobolev Institute of Mathematics, 4 Acad. Koptyug avenue, 630090, Novosibirsk, Russia.}
\address{Novosibirsk State  University, 2 Pirogova Street, 630090, Novosibirsk, Russia.}
\address{Regional Scientific and Educational Mathematical Center of Tomsk State University, 36 Lenin Ave., Tomsk, Russia}
\email{neshch@math.nsc.ru}

\author{Manpreet Singh}
\address{Department of Mathematical Sciences, Indian Institute of Science Education and Research (IISER) Mohali, Sector 81,  S. A. S. Nagar, P. O. Manauli, Punjab 140306, India.}
\email{manpreet.math23@gmail.com}

\subjclass[2020]{Primary 57K12, 20F36; Secondary 05C10}
\keywords{Virtual knot, Virtual knot group, Virtual spatial graph diagram, Marked Gauss diagram, Marked link diagram, Peripheral subgroup}

\begin{abstract}
In this paper, we define the notion of a virtually symmetric representation of representations of virtual braid groups and prove that many known representations are equivalent to virtually symmetric. Using one such representation, we define the notion of virtual link groups which is an extension of virtual link groups defined by Kauffman. Moreover, we introduce the concept of marked Gauss diagrams as a generalisation of Gauss diagrams and their interpretation in terms of knot-like diagrams. We extend the definition of virtual link groups to marked Gauss diagrams and define their peripheral structure. We define $C_m$-groups and prove that every group presented by a $1$-irreducible $C_1$-presentation of deficiency $1$ or $2$ can be realized as the group of a marked Gauss diagram.
\end{abstract}
\maketitle

\section{Introduction}

A generic projection of a link onto a plane with the information of over and under crossing arcs at double points is called its link diagram. Two link diagrams represent the same link if and only if they are related by a finite sequence of Reidemeister moves and planar isotopies. Kauffman \cite{Kauffman-1} introduced virtual links as a generalisation of classical links. A virtual link diagram is a generic immersion of a finite number of oriented circles into a plane having finitely many real and virtual crossings, where a virtual crossing is depicted by a small circle on the transversal intersection of two arcs. Two virtual link diagrams are equivalent if one can be obtained from the other by a finite sequence of generalised Reidemeister moves and isotopies of the plane. Virtual links can also be realized as link diagrams on oriented surfaces known as abstract link diagrams \cite{KK-1}, and that every virtual link can be uniquely represented by a link in a thickened, compact and oriented surface with a condition on the link complement \cite{Kuperberg-1}. The role of classical braid groups in virtual knot theory is played by virtual braid groups \cite{Kauffman-1}. S. Kamada \cite{Kamada-1} proved an analogue of Alexander and Markov theorems for oriented virtual links. An independent approach to Alexander and Markov theorems was given by Kauffman and Lambropoulou \cite{KL-1}. 
\medskip

Gauss diagrams are another way of studying links from a combinatorial point of view. It is well-known that for every oriented link diagram there is a corresponding Gauss diagram. However, there are Gauss diagrams which do not represent any link diagram. Similar to the classical setting, to every virtual link diagram, one can associate a Gauss diagram by ignoring the virtual crossings. Goussarov, Polyak and Viro \cite{GPV-1} considered Gauss diagrams up to a finite sequence of abstract Reidemeister moves for Gauss diagrams. Contrary to the classical setting, there is a one-to-one correspondence between the set of equivalence classes of virtual link diagrams and the set of equivalence classes of Gauss diagrams. By considering Gauss diagrams, finite-type invariants of virtual links can be described (see \cite{GPV-1} for more details).
\medskip

One of the fundamental problems in knot theory is the classification of knots. It has been established that the knot group of a classical knot and its peripheral subgroup along with the meridian is a complete knot invariant up to the orientation of the knot and the ambient space. Kauffman \cite{Kauffman-1} extended the notion of knot group and knot quandle to the virtual setting via diagrams, whose topological interpretation is given in \cite{FRS-1, KK-1}. Since then, various definitions of virtual link groups have been introduced in the literature \cite{Bar-1,BB-1, BB-2, BMN-1, BMN-2, BN-2, BN-3, BDGGHN-1, CSW-1, Kauffman-1, Man, SW-1}, some of which use representations of virtual braid groups into the automorphisms of appropriate groups or modules.
\medskip 

In this paper, we have introduced the notion of a virtually symmetric representation of representations of virtual braid groups and prove that most of the known representations are virtually symmetric. The key advantage of a representation being virtually symmetric is that once the virtual link group is defined, it can be described using Gauss diagrams. We have also constructed a linear, local, non-homogeneous representation of virtual braid groups and proved that it is also equivalent to a virtually symmetric representation. 
\medskip

The definition of virtual link groups introduced in this paper is constructed from a representation of virtual braid groups defined in \cite{BMN-1} which is equivalent to a virtually symmetric representation. Thus we describe these groups using Gauss diagrams too. 
\medskip

On observing the presentations of virtual link groups constructed using Gauss diagrams we introduce the notion of $C_m$-groups, which are a particular type of $C$-groups defined by Kulikov \cite{Kulikov-1}. Furthermore, we introduce marked Gauss diagrams as a generalisation of Gauss diagrams upon which we define an equivalence relation generated by a sequence of moves called marked Reidemeister moves. The set of equivalence classes of Gauss diagrams canonically injects into the set of equivalence classes of marked Gauss diagrams.
\medskip

Beineke and Harary \cite{BH-1} introduced marked digraphs as directed graphs with an information of positive or negative signs on nodes $($vertices$)$. In this paper, we consider marked cycles and define a marked virtual link diagram as a generic immersion of these cycles into a plane with virtual and classical crossing information at double points. There is a one-to-one correspondence between marked Gauss diagrams and marked virtual link diagrams under the equivalence relation generated by moves in the corresponding sets.
\medskip

Virtual link diagrams having node points (without signs) are also introduced in \cite{Dye-1}. There node points are named as (unoriented) cut points and are used to extend the notion of checkerboard colorings to all virtual link diagrams. Lately, N. Kamada \cite{Naoko-1} generalised the notion of cut points to oriented cut points and used it to construct a map from the set of virtual links to the set of (mod $m$) almost classical virtual links. Oriented cut points were further divided into coherent and incoherent cut points, and thus one can think of oriented cut points as signed node points. For more on cut points, we refer the reader to \cite{Dye-1, Dye-2, Naoko-1, Naoko-2}.
\medskip

To each marked Gauss diagram, we assign a group, which is isomorphic to the virtual link group if the considered marked Gauss diagram is a Gauss diagram. Moreover, we define its peripheral subgroups and consequently the peripheral structure of a marked Gauss diagram. We study these groups based on the methods and results developed by Kim \cite{Kim-1}. Kim \cite{Kim-1} had studied virtual link groups and their peripheral structures, which were defined by Kauffman \cite{Kauffman-1}. The main result we have proved is that every group with a $1$-irreducible $C_1$-presentation of deficiency $1$ or $2$ can be realized as a group of some marked Gauss diagram.
\medskip

The paper is organised as follows. In Section \ref{S: prelim}, we recall some definitions and fix some notations. In Section \ref{S: Virtually symmetric representations}, we define the notion of a virtually symmetric representation for representations of virtual braid groups and prove that many known representations are equivalent to virtually symmetric, including the one we have constructed in the section. In Section \ref{S: Virtual link groups}, we define virtual link groups using a particular virtually symmetric representation. In Section \ref{S: marked Gauss Diagrams}, we introduce marked Gauss diagrams and extend the notion of virtual link groups to them. In Section \ref{S: Marked virtual link diagrams}, we give an interpretation of marked Gauss diagrams in terms of generic immersion of marked cycles into a plane and call such diagrams as marked virtual link diagrams. In Section \ref{S: Realization-of-irreducible $C_1$-group}, we prove that every $1$-irreducible $C_1$-group presenting a $1$-irreducible $C_1$-presentation of deficiency $1$ or $2$ can be realized as a group of some marked Gauss diagram. In Section \ref{S: Peripheral subgroup and peripheral structure}, we define the notion of meridian, longitude, peripheral subgroup and peripheral structure for marked Gauss diagrams and study their algebraic properties for $1$-circle marked Gauss diagrams. In Section \ref{S: Peripherally specified homomorphs}, we study the peripherally specified homomorphic image of groups associated to marked Gauss diagrams. In Section \ref{S: Problems}, we summarize the paper and conclude with some questions for future research. The results in sections \ref{S: Realization-of-irreducible $C_1$-group}, \ref{S: Peripheral subgroup and peripheral structure} and \ref{S: Peripherally specified homomorphs} are inspired and modelled on the work of Kim \cite{Kim-1}.
\medskip

\section{Preliminaries}\label{S: prelim}

The \textit{$n$-strand virtual braid group} $VB _n$ is the group
with a presentation having generators $\sigma_1, \sigma_2, \ldots, \sigma_{n-1},\rho_1, \ldots, \rho_{n-1}$ and following set of relations:

\begin{itemize}
\item relations of the braid group on $n$ strands:
\begin{align*}
\sigma_i \sigma_{i+1} \sigma_i &= \sigma_{i+1} \sigma_i \sigma_{i+1} ~~\textrm{for}~~i \in \{1,2,\ldots,n-2\},\\
\sigma_i \sigma_j &= \sigma_j \sigma_i ~~\textrm{for}~~ |i-j| \geq 2~~ \textrm{and}~~i,j \in \{1,2,\ldots, n-1\},
\end{align*}

\item relations of the symmetric group:
\begin{align*}
\rho_i ^2 &=1 ~~\textrm{for}~~i \in \{1,2,\ldots,n-1\},\\
\rho_i \rho_j&= \rho_j \rho_i ~~\textrm{for}~~ |i-j| \geq 2 ~~\textrm{and}~~i,j \in \{1,2,\ldots, n-1\},\\
\rho_i \rho_{i+1} \rho_{i} &= \rho_{i+1} \rho_{i} \rho_{i+1} ~~\textrm{for}~~ i \in \{1,2,\ldots, n-2\},
\end{align*}

\item mixed relations:
\begin{align*}
\sigma_i \rho_j &= \rho_j \sigma_i~~\textrm{for}~~ |i-j| \geq 2~~\textrm{and}~~i,j \in \{1,2,\ldots,n-1\},\\
\rho_i \rho_{i+1} \sigma_i &= \sigma_{i+1} \rho_i \rho_{i+1}~~\textrm{for}~~ i \in \{1,2,\ldots,n-2\}.
\end{align*}

\end{itemize}

\begin{notation}
Let $G$ be a group and $a,b \in G$. Then $a^b$ denotes the element $b^{-1}a b$ in $G$.
\end{notation}

\begin{notation}
Let $G_1$ and $G_2$ be two groups. Then $G_1 * G_2$ denotes the free product of $G_1$ and $G_2$.
\end{notation}

\begin{notation}
Throughout the paper $\epsilon$ and $\eta$ are either $+1$ or $-1$.
\end{notation}

\section{Virtually symmetric representations}\label{S: Virtually symmetric representations}

In this section, we define the notion of a virtually symmetric representation for representations of virtual braid groups, and prove that some known representations are equivalent to virtually symmetric representations.

\begin{definition}
A representation $\varphi : VB_n \to \Aut(H)$ of the virtual braid group $VB_n$ into the automorphism group of some group (or module) $H=\langle h_1, h_2, \ldots, h_m~~|~~\mathcal{R}\rangle$ is called {\it virtually symmetric} if for any generator $\rho_i$, $i=1, 2, \ldots, n-1$, its image $\varphi(\rho_i)$ is a permutation of the generators  $h_1, h_2, \ldots, h_m$.
\end{definition}

Let $\varphi_i: VB_n \to \Aut(H)$ $(i=1,2)$ be two representations. We say that $\varphi_1$ and $\varphi_2$ are {\it equivalent} if there exists an automorphism $\phi: H \to H$ such that $\varphi_1(\beta)= \phi^{-1} \circ \varphi_2(\beta) \circ \phi$ for all $\beta \in VB_n$.
\medskip

Let $F_{n,n} = F_n \ast \mathbb{Z}^n$, where $F_n=\langle x_1, x_2, \ldots,x_n \rangle$ is the free group of rank $n$ and
$\mathbb{Z}^n = \langle v_1, v_2, \ldots, v_n \rangle$ is the free abelian group of rank $n$.
In \cite[Theorem 4.1]{BMN-1}, the extension $\varphi_M$ of Artin representation is defined for virtual braid groups, where $\varphi_M: VB_n \rightarrow \Aut(F_{n,n})$ is defined by its action on the generators as follows

\begin{align*}
	&\varphi_M(\sigma_i) :
	\left\{
	\begin{array}{l}
		x_i \mapsto  x_i x_{i+1} x_i^{-1}, \\
		x_{i+1} \mapsto x_i,  \\
		x_j \mapsto x_j, \textrm{ for } j \neq i, i+1, \\
		v_i \mapsto  v_{i+1}, \\
		v_{i+1} \mapsto v_i,  \\
		v_j \mapsto v_j , \textrm{ for } j \neq i, i+1,
	\end{array}
	\right.~~~
	&\varphi_M(\rho_i) :
	\left\{
	\begin{array}{l}
		x_i \mapsto  x_{i+1}^{v_i^{-1}}, \\
		x_{i+1} \mapsto ~~x_i^{v_{i+1}},  \\
		x_j \mapsto x_j, \textrm{ for } j \neq i, i+1, \\
		v_i \mapsto  v_{i+1}, \\
		v_{i+1} \mapsto v_i,\\
		v_j \mapsto v_j , \textrm{ for } j \neq i, i+1.
	\end{array}
	\right.
\end{align*}

In particular, if we put $v_1 = \cdots = v_n = 1$, we get the representation $\varphi_0 : VB_n \to \Aut(F_{n})$ defined by Vershinin \cite{VV-1}. Hereafter, we only write non-trivial actions on generators assuming that all other
generators are fixed.
\medskip

\begin{proposition}\label{P: main-representation-in-file}
The representation $\varphi_M: VB_n \rightarrow \Aut(F_{n,n})$
 is equivalent to a virtually symmetric representation.
\end{proposition}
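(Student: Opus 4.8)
The plan is to exhibit an explicit automorphism $\phi$ of $F_{n,n}$ that fixes every $v_k$ and conjugates the $x$-generators by suitable words in the $v$'s, so that the conjugate representation $\psi := \phi^{-1}\circ\varphi_M\circ\phi$ sends each $\rho_i$ to a genuine permutation of the generators. Observe first that $\varphi_M(\rho_i)$ already permutes the abelian generators (it transposes $v_i$ and $v_{i+1}$ and fixes the rest); the only obstruction to virtual symmetry is that $\varphi_M(\rho_i)$ carries $x_i$ and $x_{i+1}$ to the \emph{conjugates} $x_{i+1}^{v_i^{-1}}$ and $x_i^{v_{i+1}}$ rather than to bare generators. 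I will therefore search for $\phi$ of the form $\phi(x_i)=x_i^{g_i}$ and $\phi(v_i)=v_i$, with each $g_i\in\mathbb{Z}^n$ chosen to absorb these conjugating factors.

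First I would reduce the requirement ``$\psi(\rho_i)$ swaps $x_i\leftrightarrow x_{i+1}$ and fixes $x_j$ for $j\neq i,i+1$'' to a system of equations in the $g_i$. Writing $a_i=\phi(x_i)=g_i^{-1}x_i g_i$, the condition $\psi(\rho_i)(x_i)=x_{i+1}$ is equivalent to $\varphi_M(\rho_i)(a_i)=a_{i+1}$, and likewise $\psi(\rho_i)(x_{i+1})=x_i$ is equivalent to $\varphi_M(\rho_i)(a_{i+1})=a_i$. Expanding the left-hand sides using $\varphi_M(\rho_i)(x_i)=v_i x_{i+1} v_i^{-1}$ and $\varphi_M(\rho_i)(x_{i+1})=v_{i+1}^{-1}x_i v_{i+1}$, and using that $\mathbb{Z}^n$ is abelian together with the fact that the centraliser of $x_{i+1}$ in $F_{n,n}=F_n*\mathbb{Z}^n$ meets $\mathbb{Z}^n$ trivially, these two conditions collapse to
\[
\rho_i(g_i)=v_i\,g_{i+1},\qquad \rho_i(g_{i+1})=v_{i+1}^{-1}\,g_i,
\]
where $\rho_i$ now denotes the action on $\mathbb{Z}^n$ by the transposition $v_i\leftrightarrow v_{i+1}$. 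These are consistent, since applying $\rho_i$ to the first and using $\rho_i^2=1$ recovers the second.

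The crux is to solve this system, and I expect the prefix products to work: setting $g_i=(v_1v_2\cdots v_i)^{-1}$, one checks directly that $\rho_i$ fixes $v_1\cdots v_i v_{i+1}$ while sending $v_1\cdots v_i$ to $v_1\cdots v_{i-1}v_{i+1}$, whence both displayed equations follow by a short commutative computation. Equivalently, the sought automorphism is
\[
\phi(x_i)=(v_1\cdots v_i)\,x_i\,(v_1\cdots v_i)^{-1},\qquad \phi(v_i)=v_i,
\]
which is plainly invertible (its inverse conjugates $x_i$ by $(v_1\cdots v_i)^{-1}$) and hence an automorphism of $F_{n,n}$. For $j\neq i,i+1$ the element $a_j$ is fixed by $\varphi_M(\rho_i)$ because $\rho_i$ fixes both $x_j$ and $v_1\cdots v_j$, so $\psi(\rho_i)$ is exactly the permutation $x_i\leftrightarrow x_{i+1}$, $v_i\leftrightarrow v_{i+1}$ of the generators. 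This shows $\psi$ is virtually symmetric and equivalent to $\varphi_M$.

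I anticipate the main obstacle to be precisely the discovery of the conjugators $g_i$; once the recursion $\rho_i(g_i)=v_i\,g_{i+1}$ is isolated, guessing the prefix products and verifying them is routine bookkeeping. It then only remains to record $\psi(\sigma_i)=\phi^{-1}\circ\varphi_M(\sigma_i)\circ\phi$ explicitly for completeness; this is a longer word in the $x$'s and $v$'s and need not be a permutation, which is consistent with the definition of virtual symmetry.
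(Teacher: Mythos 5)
Your proof is correct and follows essentially the same approach as the paper: exhibit an automorphism of $F_{n,n}$ fixing the $v_k$ and conjugating each $x_i$ by an element of $\mathbb{Z}^n$, then check that the conjugated representation sends each $\rho_i$ to the transposition of generators. The paper uses the suffix products $\phi(x_i)=x_i^{v_iv_{i+1}\cdots v_n}$ where you use the inverse prefix products $\phi(x_i)=(v_1\cdots v_i)x_i(v_1\cdots v_i)^{-1}$; both satisfy the recursion $\rho_i(g_i)=v_ig_{i+1}$ you isolate, so the two choices are interchangeable and your verification goes through.
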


\begin{proof}
We define an automorphism $\phi: F_{n,n} \to F_{n,n}$ by setting
\begin{align*}
\phi(x_i)&=x_i ^{(v_i v_{i+1} \ldots v_n)}, \,\,\, i=1,\ldots,n,\\
\phi(v_i)&=v_i,\,\,\, i=1,\ldots,n.
\end{align*}
Thus, we have a new representation $\varphi_S: VB_n \to \Aut(F_{n,n})$ of the virtual braid group $VB_n$ into the automorphism group of $F_{n,n}$ by setting
$$
\varphi_S(\beta)=\phi^{-1} \circ \varphi_M(\beta) \circ \phi, ~~\textrm{for}~~ \beta \in VB_n.
$$
In particular,

\begin{align*}
	\varphi_S(\sigma_i) :
	\left\{
	\begin{array}{l}
		x_i \mapsto  x_i ~~x_{i+1}^{v_i}~~ x_i^{-1}, \\
		x_{i+1} \mapsto x_i^{v_{i+1}^{-1}},  \\
		v_i \mapsto  v_{i+1}, \\
		v_{i+1} \mapsto v_i,
	\end{array}
	\right.~~~
	\varphi_S(\rho_i) :
	\left\{
	\begin{array}{l}
		x_i \mapsto  x_{i+1}, \\
		x_{i+1} \mapsto x_i,  \\
		v_i \mapsto  v_{i+1}, \\
		v_{i+1} \mapsto v_i .
	\end{array}
	\right.
\end{align*}

Notice that

$$
\varphi_S(\sigma_i^{-1}) :
\left\{
\begin{array}{l}
	x_i \mapsto  x_{i+1}^{v_i}, \\
	x_{i+1} \mapsto x_{i+1} ^{-v_i v_{i+1} ^{-1}}~~ x_i^{v_{i+1} ^{-1}}~~ x_{i+1} ^{v_i v_{i+1}^{-1}},  \\
	v_i \mapsto  v_{i+1}, \\
	v_{i+1} \mapsto v_i.
\end{array}
\right.
$$

It follows that $\varphi_S$ is a virtually symmetric representation.
\end{proof}
Here we would like to mention that we shall use the representations $\varphi_M$ and $\varphi_S$ in Section \ref{S: Virtual link groups} to define virtual link groups.

\subsection{Generalised Artin representation}\label{SS: generalised Artin representation}

Let $F_{n+1} =\langle x_1, x_2, \ldots,x_n, v \rangle$ be the free group of rank $n+1$.
In \cite{Bar-1,Man}, a representation  $\varphi_A: VB_n \rightarrow \Aut(F_{n+1})$ is defined by its action on the generators as follows
\begin{align*}
&\varphi_A(\sigma_i) :
\left\{
\begin{array}{l}
  x_i \mapsto  x_i x_{i+1} x_i^{-1}, \\
  x_{i+1} \mapsto x_i,  \\
\end{array}
\right.~~~
&\varphi_A(\rho_i) :
\left\{
\begin{array}{l}
  x_i \mapsto  x_{i+1}^{v^{-1}}, \\
  x_{i+1} \mapsto x_i^{v}.  \\
\end{array}
\right.
\end{align*}
\medskip

We define an automorphism $\phi: F_{n+1} \to F_{n+1}$ by setting
\begin{align*}
\phi(x_i)&=x_i ^{v^{n-i}},\,\,\, i=1,\ldots,n,\\
\phi(v)&=v.
\end{align*}
Thus, we have a new representation $\tilde{\varphi}_A: VB_n \to \Aut(F_{n+1})$ by setting
$$
\tilde{\varphi}_A(\beta)=\phi^{-1} \circ \varphi_A(\beta) \circ \phi, ~~\textrm{where}~~ \beta \in VB_n.
$$
In particular,
\begin{align*}
\tilde{\varphi}_A(\sigma_i) :&
\left\{
\begin{array}{l}
  x_i \mapsto  x_i ~~x_{i+1}^{v}~~ x_i^{-1}, \\
  x_{i+1} \mapsto x_i^{v^{-1}},  \\
\end{array}
\right.~~~
&\tilde{\varphi}_A(\rho_i) :&
\left\{
\begin{array}{l}
  x_i \mapsto  x_{i+1}, \\
  x_{i+1} \mapsto x_i.  \\
\end{array}
\right.
\end{align*}
Therefore, $\varphi_A$ is equivalent to a virtually symmetric representation.

\medskip
\subsection{Silver-Williams representation}\label{SS: The Silver-Williams representation}
Let $F_{n,n+1} = F_n \ast \mathbb{Z}^{n+1}$, where

$F_n=\langle x_1, x_2, \ldots,x_n \rangle$ is the free group of rank $n$ and
$\mathbb{Z}^{n+1} = \langle u_1, u_2, \ldots, u_n, v \rangle$ is the free abelian group of rank $n+1$.
Using the definition of the generalised Alexander group for virtual links \cite{SW-1},
a representation $\varphi_{SW} : VB_n \longrightarrow \mathrm{Aut}(F_{n,n+1})$ is constructed in \cite{BMN-1} which is defined by its action on the generators as follows

 \begin{align*}
	&\varphi_{SW}(\sigma_i) :
	\left\{
	\begin{array}{l}
		x_i \mapsto  x_i x_{i+1}^{u_i} x_i^{-vu_{i+1}}, \\
		x_{i+1} \mapsto x_i^v,  \\
		u_i \mapsto  u_{i+1}, \\
		u_{i+1} \mapsto u_i,
	\end{array}
	\right.~~~
	&\varphi_{SW}(\rho_i) :
	\left\{
	\begin{array}{l}
		x_i \mapsto  x_{i+1}, \\
		x_{i+1} \mapsto x_i,  \\
		u_i \mapsto  u_{i+1}, \\
		u_{i+1} \mapsto u_i.
	\end{array}
	\right.
\end{align*}

This representation is virtually symmetric. 


\subsection{Boden-Dies representation}\label{SS: Boden-Dies representation}
Let $F_{n,2} = F_n \ast \mathbb{Z}^2$, where $F_n=\langle x_1, x_2, \ldots,x_n \rangle$ is the free group of rank $n$ and
$\mathbb{Z}^2 = \langle u, v \rangle$ is the free abelian group of rank $2$.
In \cite{BDGGHN-1}, a representation  $\varphi_{BD}: VB_n \rightarrow \Aut(F_{n,2})$ is defined by its action on the generators as follows
\begin{align*}
&\varphi_{BD}(\sigma_i) :
\left\{
\begin{array}{l}
  x_i \mapsto  x_i x_{i+1} x_i^{-u}, \\
  x_{i+1} \mapsto x_i^{u},  \\
\end{array}
\right.~~~
&\varphi_{BD}(\rho_i) :
\left\{
\begin{array}{l}
  x_i \mapsto  x_{i+1}^{v^{-1}}, \\
  x_{i+1} \mapsto x_i^{v}.  \\
\end{array}
\right.
\end{align*}

We define an automorphism $\phi: F_{n,2} \to F_{n,2}$ by setting
\begin{align*}
\phi(x_i)&=x_i ^{v^{n-i}},\textrm{ for } i=1,\ldots,n,\\
\phi(u)&=u,\\
\phi(v)&=v.
\end{align*}
Thus, we have a new representation $\tilde{\varphi}_{BD}: VB_n \to \Aut(F_{n,2})$ by defining
$$
\tilde{\varphi}_{BD}(\beta)=\phi^{-1} \circ \varphi_{BD}(\beta) \circ \phi, ~~\textrm{where}~~ \beta \in VB_n.
$$
We see that
\begin{align*}
\tilde{\varphi}_{BD}(\sigma_i) :&
\left\{
\begin{array}{l}
  x_i \mapsto  x_i ~~x_{i+1}^{v}~~ x_i^{-u}, \\
  x_{i+1} \mapsto x_i^{uv^{-1}},  \\
\end{array}
\right.~~~
&\tilde{\varphi}_{BD}(\rho_i) :&
\left\{
\begin{array}{l}
  x_i \mapsto  x_{i+1}, \\
  x_{i+1} \mapsto x_i.  \\
\end{array}
\right.
\end{align*}
Therefore, $\varphi_{BD}$ is equivalent to a virtually symmetric representation.


\subsection{Extended Wada representations}\label{SS: generalised Wada representation}
Let $F_{n,n} = F_n \ast \mathbb{Z}^n$, where $F_n=\langle x_1, x_2, \ldots,x_n \rangle$ is the free group of rank $n$ and
$\mathbb{Z}^n = \langle v_1, v_2, \ldots, v_n \rangle$ is the free abelian group of rank $n$.
Wada \cite{Wada-1} defined representations $w_{1,r}, w_2$,  $ w_3$, where  $r \in \mathbb{Z}$, of the braid group $B_n$ into $\Aut(F_{n})$. We define extensions of Wada representations
$w: VB_n \rightarrow \Aut(F_{n,n})$, where $w=w_{1,r}, w_2$ or $ w_3$ to the virtual braid group $VB _n$ by its action on the generators as follows

\begin{align*}
	&w_{1,r}(\sigma_i) :
	\left\{
	\begin{array}{l}
		x_i \mapsto  x_i^r x_{i+1} x_i^{-r}, \\
		x_{i+1} \mapsto x_i,  \\
		v_i \mapsto  v_{i+1}, \\
		v_{i+1} \mapsto v_i,
	\end{array}
	\right.~~~
	&w_{1,r}(\rho_i) :
	\left\{
	\begin{array}{l}
		x_i \mapsto  x_{i+1}^{v_i^{-1}}, \\
		x_{i+1} \mapsto x_i^{v_{i+1}},  \\
		v_i \mapsto  v_{i+1}, \\
		v_{i+1} \mapsto v_i ,
	\end{array}
	\right.
\end{align*}

\begin{align*}
	&w_2(\sigma_i) :
	\left\{
	\begin{array}{l}
		x_i \mapsto  x_i x_{i+1}^{-1} x_i, \\
		x_{i+1} \mapsto x_i,  \\
		v_i \mapsto  v_{i+1}, \\
		v_{i+1} \mapsto v_i,
	\end{array}
	\right.~~~
	&w_2(\rho_i) :
	\left\{
	\begin{array}{l}
		x_i \mapsto  x_{i+1}^{v_i^{-1}}, \\
		x_{i+1} \mapsto x_i^{v_{i+1}},  \\
		v_i \mapsto  v_{i+1}, \\
		v_{i+1} \mapsto v_i ,
	\end{array}
	\right.
\end{align*}

\begin{align*}
	&w_3(\sigma_i) :
	\left\{
	\begin{array}{l}
		x_i \mapsto  x_i^2 x_{i+1}, \\
		x_{i+1} \mapsto x_{i+1}^{-1}x_i^{-1}x_{i+1},  \\
		v_i \mapsto  v_{i+1}, \\
		v_{i+1} \mapsto v_i,
	\end{array}
	\right.~~~
	&w_3(\rho_i) :
	\left\{
	\begin{array}{l}
		x_i \mapsto  x_{i+1}^{v_i^{-1}}, \\
		x_{i+1} \mapsto x_i^{v_{i+1}},  \\
		v_i \mapsto  v_{i+1}, \\
		v_{i+1} \mapsto v_i.
	\end{array}
\right.
\end{align*}

The case $v_1 = v_2 = \cdots = v_n$ is studied in \cite{Mih}. We define an automorphism $\phi$ of $F_{n,n}$ by setting
\begin{align*}
\phi(x_i)&=x_i ^{(v_i v_{i+1} \ldots v_n)}\\
\phi(v_i)&=v_i,\,\,\, i=1,\ldots,n.
\end{align*}
Thus, we have new representations $\tilde{w}: VB_n \to \Aut(F_{n+1})$ by defining
$$
\tilde{w}(\beta)=\phi^{-1} \circ w(\beta) \circ \phi, ~~\textrm{where}~~ \beta \in VB_n, ~~~ w=w_{1,r}, w_2~~or~~ w_3.
$$
One can check that

\begin{align*}
	&\tilde{w}_{1,r}(\sigma_i) :
	\left\{
	\begin{array}{l}
		x_i \mapsto  x_i^r x_{i+1}^{v_i} x_i^{-r}, \\
		x_{i+1} \mapsto x_i^{v_{i+1}^{-1}},  \\
		v_i \mapsto  v_{i+1}, \\
		v_{i+1} \mapsto v_i,
	\end{array}
	\right.~~~
	&\tilde{w}_{1,r}(\rho_i) :
	\left\{
	\begin{array}{l}
		x_i \mapsto  x_{i+1}, \\
		x_{i+1} \mapsto x_i,  \\
		v_i \mapsto  v_{i+1}, \\
		v_{i+1} \mapsto v_i , \\
	\end{array}
	\right.
\end{align*}

\begin{align*}
	&\tilde{w}_2(\sigma_i) :
	\left\{
	\begin{array}{l}
		x_i \mapsto  x_i x_{i+1}^{-v_i} x_i, \\
		x_{i+1} \mapsto x_i^{v_{i+1}^{-1}},  \\
		v_i \mapsto  v_{i+1}, \\
		v_{i+1} \mapsto v_i,
	\end{array}
	\right.~~~
	&\tilde{w}_2(\rho_i) :
	\left\{
	\begin{array}{l}
		x_i \mapsto  x_{i+1}, \\
		x_{i+1} \mapsto x_i,  \\
		v_i \mapsto  v_{i+1}, \\
		v_{i+1} \mapsto v_i , \\
	\end{array}
	\right.
\end{align*}

\begin{align*}
	&\tilde{w}_3(\sigma_i) :
	\left\{
	\begin{array}{l}
		x_i \mapsto  x_i^2 x_{i+1}^{v_i}, \\
		x_{i+1} \mapsto x_{i+1}^{-v_iv_{i+1}^{-1}}x_i^{-v_{i+1}^{-1}}x_{i+1}^{v_iv_{i+1}^{-1}},  \\
		v_i \mapsto  v_{i+1}, \\
		v_{i+1} \mapsto v_i,
	\end{array}
	\right.~~~
	&\tilde{w}_3(\rho_i) :
	\left\{
	\begin{array}{l}
		x_i \mapsto  x_{i+1}, \\
		x_{i+1} \mapsto x_i,  \\
			v_i \mapsto  v_{i+1}, \\
		v_{i+1} \mapsto v_i.
	\end{array}
	\right.
\end{align*}

Therefore, the extended Wada representations are equivalent to virtually symmetric representations.

\subsection{Linear representations of braid groups}
In this section, we construct a linear, local and non-homogeneous representation of $B_n$ and prove that it is equivalent to the well-known Burau representation.
A linear representation $\varphi: B_n \to GL_n(R)$ is called {\it local} if
$$
\varphi(\sigma_i)= I^{i-1} \oplus M_i \oplus I^{n-i-1},\textrm{ for } i \in \lbrace 1,2, \ldots, n-1 \rbrace,
$$
where $I^k$ is the $k \times k$ identity matrix and $M_i$ is a $2 \times 2$ matrix, $i=1,2, \ldots, n-1$, with entries from an integral domain $R$. If $M_1=M_2= \cdots=M_{n-1}$, then $\varphi: B_n \to GL_n(R)$ is called a {\it homogeneous} representation. A linear, local and homogeneous representation of
$S_n$ is defined similarly, where $S_n$ is the symmetric group of degree $n$. A linear representation $\varphi : VB_n \to GL_n(R)$ is called {\it local $($respectively homogeneous$)$} if its restrictions to $B_n$ and $S_n$ are
local (respectively homogeneous).
\medskip

\begin{proposition}\label{P: Proposition}
The map $\varphi : B_n \to GL_n(\mathbb{Z}[t^{\pm 1}, t_1^{\pm 1}, t_2^{\pm 1}, \ldots, t_{n-1}^{\pm 1}])$ defined on the generators by
$$
\varphi(\sigma_i) = I^{i-1} \oplus \left(
\begin{array}{cc}
  1-t &  t t_i\\
t_i^{-1} & 0 \\
\end{array}
\right) \oplus I^{n-i-1}, \text{ for } i = 1, 2, \ldots, n-1,
$$
is a representation of $B_n$. In particular, if $t_i = 1$ for every $i = 1, 2, \ldots, n-1$, then it is the Burau representation. Moreover, $\varphi$ is equivalent to the Burau representation. 
\end{proposition}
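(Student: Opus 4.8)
The plan is to prove the equivalence with the Burau representation first and then to harvest the representation property as a free consequence, rather than checking the braid and far-commutativity relations by hand. The point is that conjugation by a fixed invertible matrix carries any family of matrices satisfying the braid relations to another such family; so once I exhibit an explicit conjugator taking $\varphi(\sigma_i)$ to the Burau generator, the fact that $\varphi$ is a representation is automatic.

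Write $R = \mathbb{Z}[t^{\pm 1}, t_1^{\pm 1}, \ldots, t_{n-1}^{\pm 1}]$ and recall the (unreduced) Burau representation $\beta\colon B_n \to GL_n(R)$ given by $\beta(\sigma_i) = I^{i-1} \oplus \left(\begin{smallmatrix} 1-t & t \\ 1 & 0 \end{smallmatrix}\right) \oplus I^{n-i-1}$. I would introduce the diagonal matrix $D = \operatorname{diag}(d_1, \ldots, d_n) \in GL_n(R)$ with $d_1 = 1$ and $d_j = t_1 t_2 \cdots t_{j-1}$ for $j \geq 2$; since its diagonal entries are monomials in the $t_k$, hence units, $D$ is invertible. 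Because each $\varphi(\sigma_i)$ differs from the identity only in the $2 \times 2$ block occupying rows and columns $i, i+1$, conjugation by the diagonal $D$ leaves the identity part untouched and only rescales the two off-diagonal entries of that block by the ratios $d_i/d_{i+1}$ and $d_{i+1}/d_i$. The defining relation $d_{i+1}/d_i = t_i$ then yields
$$
D\,\varphi(\sigma_i)\,D^{-1} = I^{i-1} \oplus \begin{pmatrix} 1-t & (d_i/d_{i+1})\, t t_i \\ (d_{i+1}/d_i)\, t_i^{-1} & 0 \end{pmatrix} \oplus I^{n-i-1} = \beta(\sigma_i),
$$
which is exactly the asserted equivalence (with $D$ playing the role of the intertwining automorphism of $R^n$).

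From here the remaining assertions follow at once. Because $\beta$ is a representation of $B_n$, the matrices $\beta(\sigma_i)$ satisfy $\beta(\sigma_i)\beta(\sigma_{i+1})\beta(\sigma_i) = \beta(\sigma_{i+1})\beta(\sigma_i)\beta(\sigma_{i+1})$ and $\beta(\sigma_i)\beta(\sigma_j) = \beta(\sigma_j)\beta(\sigma_i)$ for $|i-j| \geq 2$; conjugating these identities by $D^{-1}$ shows the very same relations hold for $\varphi(\sigma_i) = D^{-1}\beta(\sigma_i)D$, so $\varphi$ is a representation. Moreover each $\varphi(\sigma_i)$ is invertible, its determinant being $\det \beta(\sigma_i) = -t$, a unit, which also confirms that $\varphi$ lands in $GL_n(R)$. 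Finally, setting $t_i = 1$ for all $i$ turns the block of $\varphi(\sigma_i)$ into the Burau block, recovering the Burau representation verbatim.

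I expect the only point requiring genuine care to be the bookkeeping that confirms conjugation by the diagonal $D$ affects precisely the $2\times 2$ block in positions $i, i+1$ and nothing else; once the relation $d_{i+1}/d_i = t_i$ is imposed, the verification reduces to a single $2\times 2$ computation and presents no real obstacle. In particular, no direct check of the braid relations is needed.
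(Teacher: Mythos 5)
Your proof is correct and uses essentially the same argument as the paper: your diagonal matrix $D=\operatorname{diag}(1,t_1,t_1t_2,\ldots,t_1\cdots t_{n-1})$ is exactly the automorphism $\phi$ the paper conjugates by, and the resulting $2\times 2$ block computation is identical. The one (minor but pleasant) difference is logical order: the paper asserts that the braid relations for $\varphi$ ``can be easily deduced'' and then proves the equivalence, whereas you establish the equivalence first and obtain the representation property for free by conjugating the known Burau relations, which saves the direct verification.
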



\begin{proof}
The fact that $\varphi$ is a representation can be easily deduced. Let us now consider $\phi$ to be the automorphism of the free module $V$ with the basis $\{e_1, e_2, \ldots, e_n\}$ over the ring $R = \mathbb{Z}[t^{\pm 1}, t_1^{\pm 1}, t_2^{\pm 1}, \ldots, t_{n-1}^{\pm 1}]$ which is defined on the basis as follows
$$
\phi :  \left\{
\begin{array}{l}
e_1 \to    e_{1},    \\
e_2 \to t_1 e_2,  \\
\vdots  \\
e_n \to t_1 t_2 \ldots t_{n-1} e_n.
\end{array}
\right.
$$
Next, we consider a representation $\tilde{\varphi}$ defined as $\tilde{\varphi}(\beta) := \phi\varphi(\beta) \phi^{-1}$, where $\beta \in B_n$. Then
$$
\tilde{\varphi} (\sigma_i) = \phi \varphi(\sigma_i) \phi^{-1}  =
 \left\{
\begin{array}{ll}
e_i \to  (1-t) e_i +  e_{i+1}, &   \\
e_{i+1} \to t   e_i, & \\
e_j \to e_j, \text{ for } j \not= i, i+1,
\end{array}
\right.
$$
Hence, $\tilde{\varphi}$ is the Burau representation.

\end{proof}

It is well known that the Burau representation is not faithful for $n>4$.  Bigelow \cite{Bigelow-1} proved the existence of non-trivial elements $b_1 \in B_5$ and $b_2 \in B_6$ in the kernel of the Burau representation (see, for example, \cite{BF}). These elements are
$$
b_1 = [c_1^{-1} \sigma_4 c_1, c_2^{-1} \sigma_4 \sigma_3 \sigma_2 \sigma_1^2 \sigma_2 \sigma_3 \sigma_4 c_2],~~~
b_2 = [d_1^{-1} \sigma_3 d_1, d_2^{-1}  \sigma_3 d_2],
$$
where
$$
c_1 = \sigma_3^{-1}   \sigma_2  \sigma_1^2  \sigma_2  \sigma_4^3  \sigma_3  \sigma_2, ~~~c_2 = \sigma_4^{-1} \sigma_3  \sigma_2  \sigma_1^{-2}  \sigma_2  \sigma_1^2  \sigma_2^2  \sigma_1  \sigma_4^5, ~~~
d_1 =  \sigma_4  \sigma_5^{-1}  \sigma_2^{-1}  \sigma_1,~~~d_2 =  \sigma_4^{-1}  \sigma_5^2  \sigma_2  \sigma_1^{-2}.
$$
This means that the Burau representation is a linear, local and homogenous representation which is not faithful. Furthermore, from the result \cite[Theorem 5.3]{BF}, it follows that there does not exist any linear, local, homogeneous and faithful representation of $B_n$ for $n > 4$. This naturally leads us to ask the following question. 

\begin{question}
Does there exist a linear, local and faithful representation of $B_n$ for $n > 4$?
\end{question}

\subsection{Linear representations of virtual braid groups}\label{SS: Linear representations of the virtual braid group}

Bartholomew and Fenn \cite[Section 7]{BF-2} considered a linear, local and homogeneous representation $\varphi:VB_n \to GL_n(\mathbb{Z}[t^{\pm 1}, \lambda^{\pm 1}])$ defined on the generators as
\begin{align*}
&\sigma_i \mapsto I^{i-1} \oplus \begin{pmatrix}
1-t & \lambda^{-1} t \\
\lambda & 0
\end{pmatrix} \oplus I^{n-i-1},\\
&\rho_i \mapsto I^{i-1} \oplus \begin{pmatrix}
0  & 1 \\
1 & 0
\end{pmatrix} \oplus I^{n-i-1}.
\end{align*}

Clearly, the representation $\varphi: VB_n \to GL_n(\mathbb{Z}[t^{\pm 1}, \lambda^{\pm 1}])$ is virtually symmetric. We now construct a linear, local and non-homogeneous representation of $VB_n$ whose proof is immediate. It suffices to check that the map satisfies the defining relations of $VB_n$.

\begin{proposition}\label{not-homogeneous-virtual}
The map $\psi : VB_n \to GL_n(\mathbb{Z}[t^{\pm 1}, \lambda^{\pm 1}, t_1^{\pm 1}, t_2^{\pm 1}, \ldots, t_{n-1}^{\pm 1}])$ defined on the generators by
$$\psi(\sigma_i) = I^{i-1} \oplus \left(
\begin{array}{cc}
  1-t &    t t_i \lambda^{-1}\\
\lambda t_i^{-1} & 0 \\
\end{array}
\right) \oplus I^{n-i-1},$$
$$
\psi(\rho_i) = I^{i-1} \oplus \left(
\begin{array}{cc}
0 &   t_i \\
 t_i ^{-1} & 0 \\
\end{array}
\right) \oplus I^{n-i-1}, \text{ for }i = 1, 2, \ldots, n-1,
$$
is a representation of $VB_n$.
\end{proposition}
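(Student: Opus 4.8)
The plan is to avoid checking the defining relations of $VB_n$ one by one, and instead to exhibit $\psi$ as a conjugate of the Bartholomew--Fenn representation $\varphi : VB_n \to GL_n(\mathbb{Z}[t^{\pm 1}, \lambda^{\pm 1}])$ recalled just above, in exactly the spirit in which Proposition \ref{P: Proposition} relates its map to the Burau representation. Since $\varphi$ is already known to be a homomorphism, and conjugation by a fixed invertible matrix is an inner automorphism of $GL_n$, it will follow at once that $\psi$ is a homomorphism.

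Concretely, I would introduce the diagonal matrix
$$
D = \operatorname{diag}(1,\, t_1,\, t_1 t_2,\, \ldots,\, t_1 t_2 \cdots t_{n-1}),
$$
which lies in $GL_n(\mathbb{Z}[t^{\pm 1}, \lambda^{\pm 1}, t_1^{\pm 1}, \ldots, t_{n-1}^{\pm 1}])$ because its diagonal entries are units. Writing $d_j$ for the $j$-th diagonal entry, so that $d_{i+1} = d_i t_i$, the conjugation $D\,\psi(g)\,D^{-1}$ leaves the identity blocks untouched and merely rescales the off-diagonal entries of the $2\times 2$ block at positions $(i,i+1)$: the $(i,i+1)$-entry is multiplied by $d_i d_{i+1}^{-1} = t_i^{-1}$ and the $(i+1,i)$-entry by $d_{i+1} d_i^{-1} = t_i$. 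Applied to $\psi(\sigma_i)$ this turns the block $\begin{pmatrix} 1-t & tt_i\lambda^{-1} \\ \lambda t_i^{-1} & 0 \end{pmatrix}$ into $\begin{pmatrix} 1-t & \lambda^{-1}t \\ \lambda & 0 \end{pmatrix}$, and applied to $\psi(\rho_i)$ it turns $\begin{pmatrix} 0 & t_i \\ t_i^{-1} & 0 \end{pmatrix}$ into $\begin{pmatrix} 0 & 1 \\ 1 & 0 \end{pmatrix}$. Thus $D\,\psi(\sigma_i)\,D^{-1} = \varphi(\sigma_i)$ and $D\,\psi(\rho_i)\,D^{-1} = \varphi(\rho_i)$ for every $i$. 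Consequently the map $g \mapsto D^{-1}\varphi(g)D$, being $\varphi$ followed by an inner automorphism, is a homomorphism, and it agrees with $\psi$ on the generators; hence $\psi$ extends to a genuine representation of $VB_n$, which is the assertion.

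As a self-contained alternative --- the route the phrase \emph{``it suffices to check that the map satisfies the defining relations''} points to --- I would verify the relations directly, exploiting that $\psi$ is local: the images of $\sigma_i$ and $\rho_i$ act as the identity outside coordinates $i, i+1$. This makes every far-commutation relation ($\sigma_i\sigma_j = \sigma_j\sigma_i$, $\rho_i\rho_j = \rho_j\rho_i$ and $\sigma_i\rho_j = \rho_j\sigma_i$ for $|i-j|\ge 2$) automatic, since the relevant blocks act on disjoint coordinates. The relation $\rho_i^2 = 1$ reduces to $\begin{pmatrix} 0 & t_i \\ t_i^{-1} & 0 \end{pmatrix}^2 = I$, which is immediate. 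The only genuine computations are the three relations between adjacent indices --- the braid relation, the symmetric-group braid relation, and the mixed relation $\rho_i\rho_{i+1}\sigma_i = \sigma_{i+1}\rho_i\rho_{i+1}$ --- each of which becomes an identity between $3\times 3$ matrices supported on coordinates $i, i+1, i+2$.

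I expect the main (and only) obstacle to be bookkeeping. In the direct approach the mixed relation yields the bulkiest $3\times 3$ products and is the most error-prone to expand by hand; in the conjugation approach the single point that must be gotten right is the choice of exponents in $D$ --- equivalently the recursion $d_{i+1} = d_i t_i$ --- which simultaneously clears the parameters $t_i$ from both the $\sigma_i$-block and the $\rho_i$-block. For this reason I would present the conjugation argument as the main proof, since it replaces all the matrix algebra with the already-established fact that $\varphi$ is a representation.
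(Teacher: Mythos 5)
Your conjugation argument is correct: with $D=\operatorname{diag}(1,t_1,t_1t_2,\ldots,t_1\cdots t_{n-1})$ one has $(DMD^{-1})_{jk}=d_jM_{jk}d_k^{-1}$, so the $(i,i+1)$ and $(i+1,i)$ entries of the local block are scaled by $t_i^{-1}$ and $t_i$ respectively, which indeed sends $\psi(\sigma_i)$ and $\psi(\rho_i)$ to the Bartholomew--Fenn matrices $\varphi(\sigma_i)$ and $\varphi(\rho_i)$; since $g\mapsto D^{-1}\varphi(g)D$ is a homomorphism agreeing with $\psi$ on generators, $\psi$ is a representation. This is, however, a genuinely different route from the paper's: the paper offers no computation at all, simply declaring the proof immediate because ``it suffices to check that the map satisfies the defining relations of $VB_n$,'' i.e.\ the direct verification you sketch as your alternative. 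What your main argument buys is the elimination of all matrix algebra and, as a bonus, it simultaneously establishes the content of the paper's Proposition~\ref{P: Proposition2}, since the conjugating matrix $D$ is exactly the automorphism $\theta$ used there. What it costs is logical independence: it derives the statement from the assertion (quoted from \cite{BF-2}) that $\varphi$ is a representation, so if one wanted a self-contained verification one would still have to check the adjacent-index relations for $\varphi$ somewhere; the paper's direct route, tedious as it is, does not have this dependency. Your accounting of which relations are automatic by locality (far commutations, $\rho_i^2=1$) and which require genuine $3\times 3$ computation (the two braid relations and the mixed relation) is accurate.
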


Let us consider the ring $R=\mathbb{Z}[t^{\pm 1}, \lambda^{\pm 1}, t_1^{\pm 1}, t_2^{\pm 1}, \ldots, t_{n-1}^{\pm 1}]$ and a free $R$-module $V$ with basis $\{e_1, e_2, \ldots, e_n\}$. Then we can rewrite the above representation $\psi: VB_n \to GL(V)$ as

\begin{align*}
\psi(\sigma_i) =&  \left\{
\begin{array}{ll}
e_i \to  (1-t) e_i + \lambda t_i^{-1} e_{i+1}, &   \\
e_{i+1} \to t t_i \lambda^{-1} e_i, & \\
e_j \to e_j, \text{ for } j \not= i, i+1,
\end{array}
\right.\\
\psi(\rho_i) =&  \left\{
\begin{array}{ll}
e_i \to   t_i^{-1} e_{i+1}, &   \\
e_{i+1} \to t_i e_i, & \\
e_j \to e_j, \text{ for } j \not= i, i+1.
\end{array}
\right.
\end{align*}

The following proposition generalises the result from \cite[Theorem 7.1, part 3]{BF-2}.

\begin{proposition}\label{P: Proposition2}
The representation $\psi$ is equivalent to a virtually symmetric representation which is local and homogeneous.
\end{proposition}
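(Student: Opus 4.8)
The plan is to mimic exactly the strategy used in the preceding subsections (generalised Artin, Boden--Dies, extended Wada): exhibit an explicit change of basis $\phi$ of the free module $V$ and conjugate $\psi$ by it, producing an equivalent representation $\tilde\psi$ whose matrices for the $\rho_i$ become the bare transpositions. Concretely, I would take $\phi$ to be the diagonal automorphism of $V$ defined on the basis by
\begin{align*}
\phi : \quad e_1 &\to e_1,\\
e_2 &\to t_1 e_2,\\
&\ \vdots\\
e_n &\to t_1 t_2 \cdots t_{n-1}\, e_n,
\end{align*}
which is the same scaling used in Proposition \ref{P: Proposition}. I then set $\tilde\psi(\beta) := \phi\,\psi(\beta)\,\phi^{-1}$ for $\beta\in VB_n$, so $\tilde\psi$ is by construction equivalent to $\psi$.

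Next I would compute the conjugated images on the two families of generators. Because $\phi$ is diagonal and $\psi(\rho_i)$ is supported on coordinates $i,i+1$ with off-diagonal entries $t_i^{-1}$ and $t_i$, the conjugation rescales precisely those two entries: the map $e_{i}\mapsto t_i^{-1}e_{i+1}$ acquires a factor $(t_1\cdots t_i)/(t_1\cdots t_{i-1})=t_i$, cancelling the $t_i^{-1}$, and symmetrically for $e_{i+1}\mapsto t_i e_i$. Hence $\tilde\psi(\rho_i)$ is exactly the permutation matrix $I^{i-1}\oplus\left(\begin{smallmatrix}0&1\\ 1&0\end{smallmatrix}\right)\oplus I^{n-i-1}$, which is a genuine permutation of the basis, so $\tilde\psi$ is virtually symmetric. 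The same bookkeeping applied to $\psi(\sigma_i)$ should clear the $t_i,t_i^{-1}$ factors from the off-diagonal entries, leaving
\begin{align*}
\tilde\psi(\sigma_i) : \quad
e_i &\to (1-t)e_i + \lambda\, e_{i+1},\\
e_{i+1} &\to t\lambda^{-1} e_i,\\
e_j &\to e_j,\quad j\neq i,i+1,
\end{align*}
which is independent of the $t_i$. I would then observe that both $\tilde\psi(\sigma_i)$ and $\tilde\psi(\rho_i)$ are now the Bartholomew--Fenn matrices, and in particular the $2\times 2$ block no longer depends on $i$; this is exactly what \emph{local} and \emph{homogeneous} mean, so $\tilde\psi$ is local and homogeneous, completing the three claimed properties.

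There is essentially no hard conceptual obstacle here: the whole statement reduces to a diagonal conjugation that was already rehearsed in Proposition \ref{P: Proposition}, and the novelty is only that we must simultaneously track the $\rho_i$ block. The one point requiring genuine care is verifying that the \emph{same} diagonal $\phi$ simultaneously trivialises the $t_i$-dependence of both $\psi(\sigma_i)$ and $\psi(\rho_i)$; I would check this by comparing the exponents $t_1\cdots t_i$ attached to consecutive basis vectors and confirming that the ratio $t_i$ cancels the $t_i^{-1}$ (and $t_i$ introduces the needed factor) in each off-diagonal position of both blocks. Since the $t_i$-weighting enters $\sigma_i$ and $\rho_i$ through the identical entries $t_i^{\pm1}\lambda^{\mp1}$ and $t_i^{\pm1}$ respectively, a single $\phi$ does clear both, and after this the homogeneity (independence of $i$ of the resulting blocks) is immediate from inspection. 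I would close by noting that equivalence of representations is preserved under this conjugation, so $\psi$ is equivalent to the virtually symmetric, local, homogeneous representation $\tilde\psi$, as required.
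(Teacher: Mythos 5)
Your proposal is correct and coincides with the paper's own proof: the paper conjugates $\psi$ by exactly the same diagonal automorphism $e_k \mapsto t_1\cdots t_{k-1}e_k$ and arrives at the same matrices for $\tilde\psi(\sigma_i)$ and $\tilde\psi(\rho_i)$. Your explicit bookkeeping of how the factors $t_i^{\pm 1}$ cancel in both blocks is a correct elaboration of what the paper leaves as a routine verification.
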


\begin{proof}
Let $\theta$ be the automorphism of $V$, which is defined on the basis by
$$
\theta =  \left\{
\begin{array}{l}
e_1 \to    e_{1},    \\
e_2 \to t_1 e_2,  \\
\vdots  \\
e_n \to t_1 t_2 \ldots t_{n-1} e_n.
\end{array}
\right.
$$
Consider the representation $\tilde{\psi} = \theta \psi \theta^{-1}$. By definition, this representation is equivalent to $\psi$ and is a virtually symmetric representation. Indeed,
$$
\tilde{\psi}(\sigma_i) =  \left\{
\begin{array}{ll}
e_i \to  (1-t) e_i + \lambda  e_{i+1}, &   \\
e_{i+1} \to t  \lambda^{-1} e_i, & \\
e_j \to e_j, \text{ for } j \not= i, i+1,
\end{array}
\right.
$$
$$
\tilde{\psi}(\rho_i) =  \left\{
\begin{array}{ll}
e_i \to    e_{i+1}, &   \\
e_{i+1} \to  e_i, & \\
e_j \to e_j, \text{ for } j \not= i, i+1.
\end{array}
\right.
$$
\end{proof}

\section{Virtual link groups}\label{S: Virtual link groups}
In the proof of Proposition \ref{P: main-representation-in-file} it is shown that the representations $\varphi_M$ and $\varphi_S$ are equivalent, and $\varphi_S$ is virtually symmetric. In this section, we use these representations to associate a group to each virtual link by various approaches.
\par

\subsection{Braid approach}\label{SS: The braid approach}

It is known \cite{BB-1, BMN-1} that for a given representation of $VB_n$ into the automorphism group of some group or module, one can assign a group to any braid $\beta \in VB_n$. Let $\varphi:VB_n \to \Aut(H)$ be a representation of $VB_n$ into the automorphism group of some group $H=\langle h_1,h_2,\ldots, h_m~~|~~\mathcal{R}\rangle$, where $\mathcal{R}$ is the set of defining relations. For a given $\beta \in VB_n$, we associate the group

$$
G_{\varphi}(\beta)=\langle h_1, h_2, \ldots, h_m~~|~~\mathcal{R}, h_i=\varphi(\beta)(h_i), i=1,2,\ldots,m \rangle.
$$
\par
\vspace*{.5cm}
For each $\beta \in VB_n$, let  $G_M(\beta)$ and $G_S(\beta)$ be the groups corresponding to the representations $\varphi_M$ and $\varphi_S$, respectively.
\medskip

The following result can be proved on similar lines as done in\cite[Section 6]{BMN-1}.
\begin{theorem}
If $\beta$ and $\beta'$ are two virtual braids such that their closures define the same virtual link, then $G_M(\beta) \cong G_M(\beta').$
\end{theorem}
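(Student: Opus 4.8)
The plan is to invoke the analogue of the Markov theorem for virtual braids due to Kamada \cite{Kamada-1}: the closures of $\beta \in VB_n$ and $\beta' \in VB_{n'}$ represent the same virtual link if and only if $\beta$ and $\beta'$ are connected by a finite sequence of virtual Markov moves, namely (i) conjugation $\gamma \leftrightarrow \alpha \gamma \alpha^{-1}$ inside a fixed $VB_n$, and (ii) the right real and virtual stabilizations $\gamma \leftrightarrow \gamma \sigma_n^{\pm 1}$ and $\gamma \leftrightarrow \gamma \rho_n$ passing between $VB_n$ and $VB_{n+1}$. Since $G_M(\beta)$ visibly depends only on $\beta$ as an element of $VB_n$ and not on a chosen word, it then suffices to prove that the isomorphism type of $G_M$ is preserved by each of these moves. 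It is convenient first to record that equivalent representations yield isomorphic groups: writing $G_\varphi(\gamma) = F_{n,n}/N_\gamma^\varphi$, where $N_\gamma^\varphi$ is the normal closure of $\{h^{-1}\varphi(\gamma)(h) : h \in F_{n,n}\}$, the automorphism $\phi$ of Proposition \ref{P: main-representation-in-file} carries $N_\gamma^{\varphi_S}$ onto $N_\gamma^{\varphi_M}$ and hence descends to $G_S(\gamma) \cong G_M(\gamma)$. Thus I may work with $G_S$ throughout, whose advantage is that $\varphi_S(\rho_i)$ is a genuine transposition of generators.

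Conjugation invariance is then a purely algebraic observation valid for any representation. If $\beta' = \alpha \beta \alpha^{-1}$ then $\varphi_S(\beta') = \varphi_S(\alpha)\,\varphi_S(\beta)\,\varphi_S(\alpha)^{-1}$, and for each $h \in F_{n,n}$ one computes $\varphi_S(\alpha)\bigl(h^{-1}\varphi_S(\beta)(h)\bigr) = g^{-1}\varphi_S(\beta')(g)$ with $g = \varphi_S(\alpha)(h)$. As $g$ ranges over all of $F_{n,n}$, this shows $\varphi_S(\alpha)(N_\beta^{\varphi_S}) = N_{\beta'}^{\varphi_S}$, so the automorphism $\varphi_S(\alpha)$ descends to the desired isomorphism $G_S(\beta) \cong G_S(\beta')$. (Here I use the standard fact that $N_\gamma^{\varphi_S}$ equals the normal closure taken over all of $F_{n,n}$, proved by an induction on word length showing $w^{-1}\varphi_S(\gamma)(w) \in N_\gamma^{\varphi_S}$ for every word $w$.)

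The substantive part, and the step I expect to be the main obstacle, is stabilization. Passing from $\gamma \in VB_n$ to $\gamma \sigma_n^{\pm 1}$ or $\gamma \rho_n$ in $VB_{n+1}$ enlarges $F_{n,n}$ to $F_{n+1,n+1}$ by the two generators $x_{n+1}, v_{n+1}$ and adjoins the two corresponding fixed-point relations. I would show that the relation coming from $v_{n+1}$ lets one eliminate $v_{n+1}$, and the relation coming from $x_{n+1}$ expresses $x_{n+1}$ as a word in the remaining generators, so that both new generators may be removed by Tietze transformations; the surviving relations should then collapse to exactly the defining relations of $G_S(\gamma)$. The delicate point is that $\varphi_S(\sigma_n)$ sends $x_{n+1} \mapsto x_n^{v_{n+1}^{-1}}$ and swaps $v_n \leftrightarrow v_{n+1}$, so after eliminating $x_{n+1}$ one must track how the $v$-conjugations thread through the relation $x_n = \varphi_S(\gamma\sigma_n)(x_n)$ and verify that they cancel to reproduce precisely $x_n = \varphi_S(\gamma)(x_n)$; this cancellation is the crux of the argument, and it proceeds exactly as in \cite[Section 6]{BMN-1}.

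Finally, the virtual stabilization $\gamma \rho_n$ is the easier case, since $\varphi_S(\rho_n)$ merely transposes $x_n \leftrightarrow x_{n+1}$ and $v_n \leftrightarrow v_{n+1}$, so the new relation forces $x_{n+1}$ and $v_{n+1}$ to coincide with the appropriate old generators and the elimination is immediate. Assembling the three move-invariances with the reduction to $G_S$ yields $G_M(\beta) \cong G_S(\beta) \cong G_S(\beta') \cong G_M(\beta')$, completing the proof.
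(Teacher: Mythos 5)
Your overall strategy --- reduce to $G_S$ via the conjugating automorphism $\phi$, then check invariance of the group move-by-move using Kamada's Markov-type theorem --- is exactly the route the paper intends (its ``proof'' is a one-line deferral to \cite[Section 6]{BMN-1}, which proceeds this way). The reduction to $G_S$, the conjugation step, and the stabilization eliminations are all sound in outline.

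However, there is a concrete gap: you have misstated Kamada's theorem. For virtual braids, conjugation and right stabilization (real and virtual) do \emph{not} suffice; Kamada's equivalence in \cite{Kamada-1} also requires the \emph{right and left virtual exchange moves}, of the form $\beta_1\sigma_n^{-1}\beta_2\sigma_n \leftrightarrow \beta_1\rho_n\beta_2\rho_n$ for $\beta_1,\beta_2 \in VB_n \subset VB_{n+1}$ (and the analogous move on the first strand). Unlike the classical exchange move, these are not consequences of conjugation and stabilization --- they are precisely what replaces the missing ``detour around the braid axis'' in the virtual closure, which is why Kamada lists them as independent moves. Your case analysis therefore omits two families of moves, and the invariance of $G_S$ under them is not automatic: the two sides of the exchange move act on $x_n, x_{n+1}, v_n, v_{n+1}$ quite differently ($\varphi_S(\rho_n)$ is a bare transposition while $\varphi_S(\sigma_n^{-1})$ introduces $v$-twisted conjugates), so one must actually verify that the two resulting presentations define isomorphic groups, as is done for $\varphi_M$ in \cite[Section 6]{BMN-1}. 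Until that verification is supplied, the proof is incomplete. A secondary, smaller point: when you eliminate $v_{n+1}$ in the stabilization step you should note explicitly that the relation $v_{n+1} = \varphi_S(\gamma\sigma_n^{\pm1})(v_{n+1})$ reads $v_{n+1} = \varphi_S(\gamma)(v_n)$ (a permutation of the $v$'s), so the elimination identifies $v_{n+1}$ with one of the surviving $v_i$ rather than deleting a free generator; this is what makes the count of commuting generators come out right.
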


The above theorem implies that the group $G_M(\beta)$ is an invariant of virtual links.

\begin{theorem}\label{isomorphism-of-G_M-and-G_S}
Let $\beta \in VB_n$. Then the group $G_M(\beta)$ is isomorphic to the group $G_S(\beta)$. In particular, $G_S(\beta)$ is a link invariant.
\end{theorem}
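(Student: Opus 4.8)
The plan is to exploit directly the equivalence $\varphi_S(\beta)=\phi^{-1}\circ\varphi_M(\beta)\circ\phi$ established in the proof of Proposition \ref{P: main-representation-in-file}, and to show that the very automorphism $\phi$ of $F_{n,n}$ descends to the desired isomorphism of quotients. First I would reinterpret the two groups intrinsically. Writing $H=F_{n,n}$, $\alpha_M=\varphi_M(\beta)$ and $\alpha_S=\varphi_S(\beta)$, each of $G_M(\beta)$ and $G_S(\beta)$ is the quotient of $H$ by the normal subgroup generated by the relators $h_i^{-1}\alpha(h_i)$ as $h_i$ ranges over the generators $x_1,\dots,x_n,v_1,\dots,v_n$; call these normal subgroups $N_M$ and $N_S$, so that $G_M(\beta)=H/N_M$ and $G_S(\beta)=H/N_S$.

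The key preliminary observation is that for any automorphism $\alpha$ of $H$, the normal closure of $\{h_i^{-1}\alpha(h_i)\}$ taken over the generators already coincides with the normal closure of the full set $\{h^{-1}\alpha(h):h\in H\}$. This rests on the cocycle-type identity $(hh')^{-1}\alpha(hh')=\bigl(h^{-1}\alpha(h)\bigr)^{h'}\,\bigl(h'^{-1}\alpha(h')\bigr)$, together with the companion identity $h\,\alpha(h)^{-1}=\bigl((h^{-1}\alpha(h))^{-1}\bigr)^{h^{-1}}$ handling inverses, both in the paper's convention $a^b=b^{-1}ab$; an induction on word length then shows every relator indexed by an arbitrary element of $H$ is a consequence of those indexed by the generators.

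With this in hand the descent is formal. Applying $\phi$ to a defining relator of $N_S$ gives $\phi\bigl(h_i^{-1}\alpha_S(h_i)\bigr)=\phi(h_i)^{-1}\,\alpha_M\bigl(\phi(h_i)\bigr)$, which is of the form $h'^{-1}\alpha_M(h')$ with $h'=\phi(h_i)$; by the observation above this element lies in $N_M$, whence $\phi(N_S)\subseteq N_M$. The symmetric computation, using $\phi^{-1}$ and the relation $\varphi_M(\beta)=\phi\circ\varphi_S(\beta)\circ\phi^{-1}$, yields $\phi^{-1}(N_M)\subseteq N_S$, i.e.\ the reverse inclusion. Hence $\phi(N_S)=N_M$, and since $\phi$ is an automorphism of $H$ carrying $N_S$ onto $N_M$, it induces an isomorphism $G_S(\beta)=H/N_S\xrightarrow{\ \sim\ }H/N_M=G_M(\beta)$.

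The main obstacle is precisely the preliminary observation of the second paragraph: because $\phi$ sends a generator to a non-generator (for instance $\phi(x_i)=x_i^{(v_i\cdots v_n)}$), the images of the defining relators of $G_S(\beta)$ are not themselves among the defining relators of $G_M(\beta)$, so one genuinely needs to know that the relators indexed by all of $H$ are already implied by those indexed by generators. Once this is granted, the remainder is bookkeeping. Finally, the \emph{in particular} clause is immediate: by the preceding theorem $G_M(\beta)$ is a virtual link invariant up to isomorphism, and since $G_S(\beta)\cong G_M(\beta)$, the group $G_S(\beta)$ is one as well.
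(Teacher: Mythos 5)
Your proof is correct, and at its core it follows the same route as the paper's: both transport the defining relators of $G_S(\beta)$ onto those of $G_M(\beta)$ via the automorphism $\phi$ of Proposition \ref{P: main-representation-in-file} and the intertwining relation $\phi\circ\varphi_S(\beta)=\varphi_M(\beta)\circ\phi$. The difference lies in how the transported relators are recognised as equivalent to the original ones. The paper stays at the level of presentations: it rewrites $\varphi_M(\beta)\bigl(\phi(x_i)\bigr)=\phi(x_i)$ as $\bigl(\varphi_M(\beta)(x_i)\bigr)^{v_i\cdots v_n}=x_i^{v_i\cdots v_n}$ (using the relations $\varphi_M(\beta)(v_j)=v_j$ and $[v_i,v_j]=1$ to move the conjugating word past $\varphi_M(\beta)$) and then cancels the conjugation outright, exploiting the specific shape $\phi(x_i)=x_i^{v_i\cdots v_n}$. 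You instead isolate the general lemma that, for any automorphism $\alpha$ of $H$, the normal closure of the relators $h_i^{-1}\alpha(h_i)$ taken over a generating set already contains $h^{-1}\alpha(h)$ for every $h\in H$, proved via the cocycle identity; this is precisely the point the paper's explicit computation sidesteps. What your version buys is generality: the same argument shows $G_{\varphi_1}(\beta)\cong G_{\varphi_2}(\beta)$ for any pair of equivalent representations, with no dependence on the particular form of $\phi$, whereas the paper's computation is tailored to $\varphi_M$ and $\varphi_S$. The \emph{in particular} clause is handled identically in both, by citing the invariance of $G_M(\beta)$ from the preceding theorem.
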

\begin{proof}
For  $\beta \in VB_n$, the group $G_S(\beta)$ has a following presentation
$$
G_S(\beta)=\langle x_1, \ldots, x_n, v_1, \ldots, v_n~~|~~[v_i, v_j]=1, \varphi_S(\beta)(x_i)=x_i, \varphi_S(\beta)(v_j)=v_j \rangle.
$$
Consider the map $\phi: F_{n,n} \to F_{n,n}$ defined in Proposition \ref{P: main-representation-in-file}.
So we have
\begin{align*}
G_S(\beta) &\cong \langle x_1, \ldots, x_n, v_1, \ldots, v_n~~|~~[v_i, v_j]=1, \phi \big(\varphi_S(\beta)(x_i)\big)=\phi(x_i), \phi\big(\varphi_S(\beta)(v_j)\big)=v_j \rangle\\
\\
&\cong \langle x_1, \ldots, x_n, v_1, \ldots, v_n~~|~~[v_i, v_j]=1, \varphi_M(\beta)\big(\phi(x_i)\big)=\phi(x_i), \varphi_M(\beta)\big(\phi(v_j)\big)=v_j \rangle\\
\\
&\cong \langle x_1, \ldots, x_n, v_1, \ldots, v_n~~|~~[v_i, v_j]=1, \big(\varphi_M(\beta)(x_i)\big)^{v_i\ldots v_n}=x_i^{v_i \ldots v_n}, \varphi_M(\beta)\big(v_j\big)=v_j \rangle\\
\\
&\cong \langle x_1, \ldots, x_n, v_1, \ldots, v_n~~|~~[v_i, v_j]=1, \varphi_M(\beta)(x_i)=x_i, \varphi_M(\beta)\big(v_j\big)=v_j \rangle\\
\\
&\cong G_M(\beta).
\end{align*}
\end{proof}

\begin{remark}
Let $D_L$ be a link diagram representing the virtual link $L$ and $\beta$ be a virtual braid whose closure is equivalent to $L$. Then one can write a presentation of the group $G_S(\beta)$ using the diagram $D_L$. For more on this approach, we refer the reader to \cite[Subsection 6.2]{BMN-1}.
\end{remark}

\begin{notation}
Henceforth, the notation $G_S(L)$ stands for the group $G_S(\beta)$, where $\beta$ is a virtual braid whose closure is the virtual link $L$.
\end{notation}

Hereafter, for a given virtual link $L$, the term {\it virtual link group of $L$} stands for the group $G_S(L)$.

\begin{remark}\label{R: one component link}
	We note that if the closure of $\beta \in VB_n$ is a virtual knot, then $G_S(\beta) \cong G_{\tilde{\varphi}_A}(\beta)$, where $\tilde{\varphi}_A$ is the generalised Artin representation considered in Subsection \ref{SS: generalised Artin representation}.
\end{remark}

\begin{remark}
	If we put the relations $v_i=1$ for all $i$ in the presentation of $G_S(L)$, then we recover the group $G_0(L)$ defined by Kauffman \cite{Kauffman-1} which corresponds to the representation $\varphi_0$ mentioned in Section \ref{S: Virtually symmetric representations}.
\end{remark}

\subsection{Gauss diagram approach}\label{GaussDiagramsAndGroups}
A \textit{Gauss diagram} consists of a finite number of circles oriented anticlockwise with a finite number of signed arrows whose heads and tails lie on the circles. If the head and tail of an arrow lie on the same circle, then it is said to be a {\it chord}. For every oriented virtual link diagram one can construct a Gauss diagram. There is a one-to-one correspondence between virtual links and Gauss diagrams considered up to the moves shown in Figure \ref{fig:GaussMoves}. For more details, see \cite{GPV-1, Kauffman-1, Petter-1}.

\begin{figure}[H]
	\centering
	\includegraphics[scale=1.4]{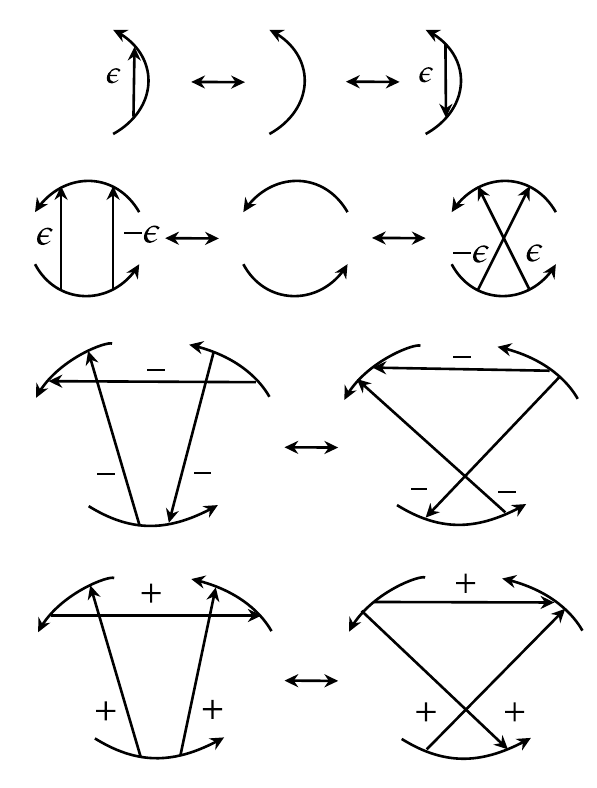}
	\caption{Moves on Gauss diagrams corresponding to the Reidemeister moves.}
	\label{fig:GaussMoves}
\end{figure}
\par

The advantage of using the virtually symmetric representation $\varphi_S:VB_n \to \Aut(F_{n,n})$ over its equivalent representation $\varphi_M: VB_n \to \Aut(F_{n,n})$ is that for a given virtual link $L$, we can write a presentation of the virtual link group $G_S(L)$ using the corresponding \textrm{Gauss diagram}. Let $D$ be a Gauss diagram with $m$ circles representing the virtual link $L$. Label the circles with symbols $v_1,\ldots,v_m$. If we cut the circles at the head and tail of each arrow, then the circles of $D$ are divided into arcs to which we assign symbols $x_1, x_2, \ldots, x_n$. Next, we define a group
$$\pi_D=\langle x_1, \ldots, x_n, v_1, \ldots, v_m~|~~\mathcal{R},v_iv_j=v_jv_i~\textrm{where} ~1 \leq i,j \leq m\rangle,$$
where $\mathcal{R}$ is the set of relations defined for each signed arrow as depicted in Figure \ref{Gauss-Diagram-Relations}.

\begin{figure}[H]
	\centering
	\includegraphics[scale=1.4]{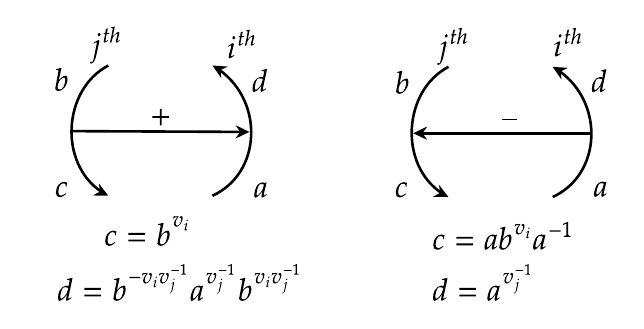}
	\caption{Relations for the group $\pi_D$.}
	\label{Gauss-Diagram-Relations}
\end{figure}

We note down the following result which is not difficult to prove.
\begin{proposition}
If $D$ is a Gauss diagram representing virtual link $L$, then $\pi_D \cong G_S(L)$.
\end{proposition}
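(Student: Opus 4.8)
The plan is to realize both groups through a common arc-labelled presentation read off from a diagram, and then to use the virtual symmetry of $\varphi_S$ to discard the contribution of the virtual crossings. First I would invoke the virtual analogue of Alexander's theorem (Kamada \cite{Kamada-1}) to write $L$ as the closure of some virtual braid $\beta \in VB_n$, so that $\pi_D$ and $G_S(L) = G_S(\beta)$ refer to the same link. Following the diagrammatic description of $G_S(\beta)$ from \cite[Subsection 6.2]{BMN-1}, I would expand the defining relations $\varphi_S(\beta)(x_i) = x_i$ and $\varphi_S(\beta)(v_j) = v_j$ into a Wirtinger-type presentation: decomposing the braid word $\beta = s_1 \cdots s_k$ into letters $s_\ell \in \{\sigma_i^{\pm 1}, \rho_i\}$, I introduce a fresh generator for each strand segment lying between two consecutive letters (together with the closure identifications at top and bottom), and for each letter a block of relations dictated by the local action of $\varphi_S(s_\ell)$ on the two strands it involves.

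The generators $v_1, \ldots, v_n$ are permuted by each letter according to the underlying symmetric-group image of $\beta$; hence the relations $\varphi_S(\beta)(v_j) = v_j$ identify the $v$'s lying on strands that belong to the same cycle of this permutation, that is, to the same component of $L$. This leaves exactly $m$ independent circle labels, matching the labels $v_1, \ldots, v_m$ attached to the $m$ circles of $D$. The decisive step is then the virtual symmetry of $\varphi_S$ established in Proposition \ref{P: main-representation-in-file}: since $\varphi_S(\rho_i)$ merely transposes $x_i \leftrightarrow x_{i+1}$ and $v_i \leftrightarrow v_{i+1}$, every letter $\rho_i$ contributes only relations of the form (segment below) $=$ (segment above), so I may Tietze-eliminate the generators created at each virtual crossing.

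After this elimination, the surviving $x$-generators are precisely the maximal strand segments bounded by classical crossings, equivalently the arcs obtained by cutting the circles of $D$ at the head and tail of every arrow, and the surviving relations are one block per classical crossing, i.e. one per arrow of $D$. It then remains to verify that each such block coincides with the relation assigned to the corresponding signed arrow in Figure \ref{Gauss-Diagram-Relations}. This is a direct comparison: at a positive crossing the two outgoing arcs are expressed through the incoming ones by $x_i\, x_{i+1}^{v_i}\, x_i^{-1}$ and $x_i^{v_{i+1}^{-1}}$, exactly as in the displayed action of $\varphi_S(\sigma_i)$, with $v_i$ the label of the circle carrying the arrow's endpoint; at a negative crossing the relation is the one produced by $\varphi_S(\sigma_i^{-1})$ recorded in the proof of Proposition \ref{P: main-representation-in-file}.

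I expect the main obstacle to be purely bookkeeping rather than conceptual: keeping the orientation and sign conventions consistent between the braid-strand picture and the Gauss-diagram figure, and checking that the $v$-label appearing in each local relation is precisely the one attached to the relevant circle of $D$. Once these conventions are aligned, the chain of Tietze transformations above yields an explicit isomorphism $\pi_D \cong G_S(\beta) = G_S(L)$, completing the argument.
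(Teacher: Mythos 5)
The paper itself offers no proof of this proposition (it is stated as ``not difficult to prove''), so there is nothing to match your argument against line by line; your route via a braid presentation and the diagrammatic description of $G_S(\beta)$ from \cite[Subsection 6.2]{BMN-1} is clearly the intended one, and the core mechanism you identify is right: because $\varphi_S$ is virtually symmetric, each letter $\rho_i$ contributes only identifications of segment generators, which Tietze-eliminate away, while each $\sigma_i^{\pm 1}$ contributes exactly the relation attached to a signed arrow, and the relations $\varphi_S(\beta)(v_j)=v_j$ collapse the strand labels $v_1,\dots,v_n$ to one label per component. That is the whole content of the statement.

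There is, however, one step you elide that needs to be filled in. Your computation establishes an isomorphism $\pi_{D'}\cong G_S(\beta)$ where $D'$ is the Gauss diagram of the \emph{closed braid diagram} $\widehat{\beta}$; the proposition asserts $\pi_D\cong G_S(L)$ for an \emph{arbitrary} Gauss diagram $D$ representing $L$. Kamada's braiding theorem only guarantees that $\widehat{\beta}$ is equivalent to a diagram realizing $D$, not that its Gauss diagram is literally $D$, so $D$ and $D'$ may differ by a sequence of the moves in Figure \ref{fig:GaussMoves}. To close the argument you must either (a) verify directly that $\pi_D$ is invariant under those moves (a short check of the relations in Figure \ref{Gauss-Diagram-Relations}, parallel to the invariance check the paper performs for $\Pi_D$ under the marked Reidemeister moves), or (b) arrange the braiding so that the closure's Gauss diagram is exactly $D$. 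Without one of these, the chain $\pi_D \cong \pi_{D'} \cong G_S(\beta) \cong G_S(L)$ is missing its first link. The remaining issues you flag (orientation and sign conventions, and which circle's label $v$ appears in each arrow relation --- it is the circle carrying the arrow's tail) are indeed only bookkeeping.
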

\section{Marked gauss diagrams}\label{S: marked Gauss Diagrams}
This is one of the main sections in the paper. In this section, we define and study the Gauss diagrams with an additional structure, and extend the notion of the virtual link group to these diagrams using the similar approach as used in Subsection \ref{GaussDiagramsAndGroups}.

\begin{definition}
A \textit{marked Gauss diagram} is a collection of a finite number of circles oriented anticlockwise having a finite number of signed arrows whose heads and tails lie on the circles along with a finite number of signed nodes on the circles which do not touch any of the arrows. If the head and tail of an arrow lie on the same circle, then it is said to be a {\it chord}.
\end{definition}

By a {\it $1$-circle marked Gauss diagram}, we mean a marked Gauss diagram having only one circle. Figures \ref{F: Example-1} and \ref{F: Example-2} illustrate some examples of marked Gauss diagrams.

\begin{figure}[H]
	\centering
	\includegraphics[scale=1.4]{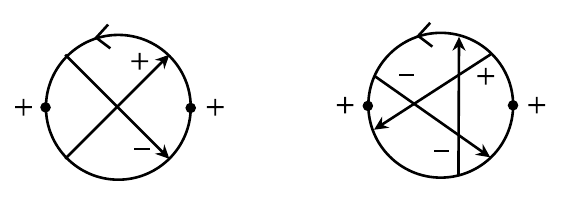}
	\caption{Examples of $1$-circle marked Gauss diagrams.}
	\label{F: Example-1}
\end{figure}

\begin{figure}[H]
	\centering
	\includegraphics[scale=1.4]{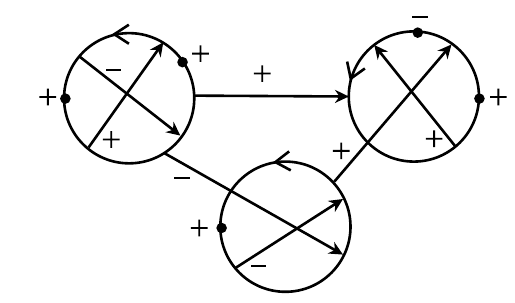}
	\caption{A marked Gauss diagram with three components.}
	\label{F: Example-2}
\end{figure}

We consider marked Gauss diagrams up to the equivalence relation generated by finite sequence of the moves shown in figures \ref{fig:GaussMoves} and \ref{F: AddGaussMoves}, and we call these moves as the \textit{marked Reidemeister moves}. Note that in Figure \ref{F: AddGaussMoves}, the moves involve only those arcs which belong to the same circle in a given marked Gauss diagram.

\begin{figure}[H]
	\includegraphics[width=9cm]{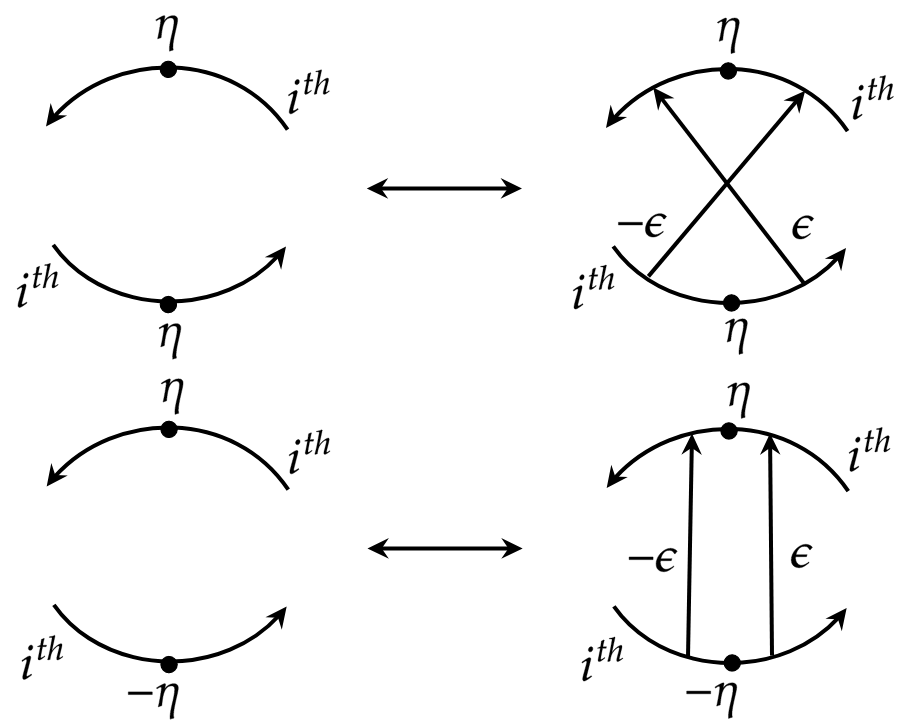}
	\caption{Additional moves on marked Gauss diagrams.}
	\label{F: AddGaussMoves}
\end{figure}

It is clear from Figure \ref{F: AddGaussMoves} that the concept of marked Gauss diagrams is a proper generalisation of Gauss diagrams, that is, there is a canonical injective map from the set of equivalence classes of Gauss diagrams to the set of equivalence classes of marked Gauss diagrams.
\par
\vspace*{.2cm}
Next, to each marked Gauss diagram we associate a group as follows. Let $D$ be a marked Gauss diagram with $m$ circles where the $i^{th}$ circle is labelled as $v_i$. Then we cut the circles at the head and tail of each arrow and at each node point dividing the circles of $D$ into arcs to which we assign symbols $x_1, x_2, \ldots, x_n$. Define a group
$$
\Pi_D:= \langle x_1, \ldots, x_n, v_1, \ldots, v_m~|~\mathcal{R},v_iv_j=v_jv_i~\textrm{for}~1 \leq i, \leq j \leq m \rangle,
$$
where $\mathcal{R}$ consists of relations for each arrow and each node in $D$ as shown in Figure \ref{Marked-Gauss-Diagram-Relations}. It is easy to check that the group $\Pi_D$ is invariant under the marked Reidemeister moves. We note that if $D$ is a Gauss diagram, then $\Pi_D \cong \pi_D$ which shows that the notion of the virtual link group can be extended to marked Gauss diagrams.

\begin{figure}[H]
    \includegraphics[width=9cm]{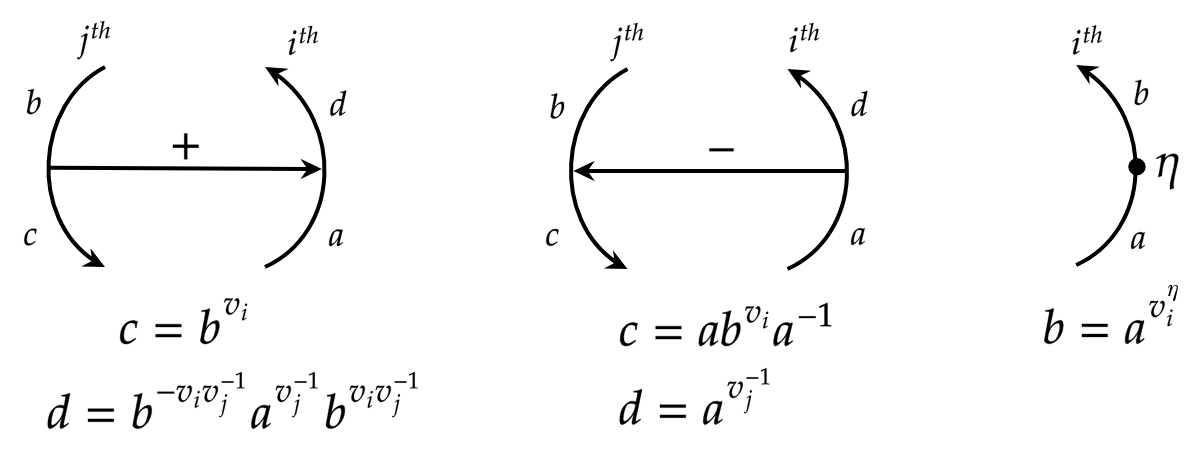}
    \caption{Relations for the group $\Pi_D$.}
    \label{Marked-Gauss-Diagram-Relations}
\end{figure}
\par

\begin{proposition}
The number of nodes, and the sum and product of sign of nodes in a given marked Gauss diagram are invariant under the marked Reidemeister moves.
\end{proposition}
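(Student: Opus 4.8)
The plan is to prove the invariance one generating move at a time, exploiting a clean dichotomy between the two families of marked Reidemeister moves. Let $D$ be a marked Gauss diagram whose nodes carry signs $\epsilon_1, \epsilon_2, \ldots, \epsilon_k \in \{+1, -1\}$. The three quantities in question, namely the number of nodes $k$, the sum $\sum_{j=1}^{k} \epsilon_j$, and the product $\prod_{j=1}^{k} \epsilon_j$, are each completely determined by the multiset $\{\epsilon_1, \ldots, \epsilon_k\}$ of signed nodes. It therefore suffices to show that every generating marked Reidemeister move preserves this multiset.

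First I would treat the moves of Figure \ref{fig:GaussMoves}, the classical Reidemeister moves on Gauss diagrams. These are local modifications supported near the heads and tails of arrows, and they involve only arrows together with the arcs joining them. Since, by definition of a marked Gauss diagram, a node never touches an arrow, each such move leaves every node fixed in place along with its sign; the signed-node multiset is thus literally unchanged. Next I would inspect the additional moves of Figure \ref{F: AddGaussMoves}, which are the only moves that interact with nodes. The point to verify is that each of these moves is \emph{conservative}: it neither creates nor deletes a node, and it does not alter the sign of any node, but merely repositions nodes along a single circle (for instance sliding a node past an arrow endpoint, or reordering two adjacent nodes). Granting this, the multiset $\{\epsilon_1, \ldots, \epsilon_k\}$ is again preserved by every move in this family.

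Combining the two cases, every generator of the marked Reidemeister equivalence preserves the signed-node multiset, and hence preserves its cardinality, its sum, and its product. Since the equivalence relation is generated by these moves, all three quantities descend to invariants of the equivalence class, which is precisely the assertion.

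The main obstacle is the explicit check in the second step: one must confirm directly from Figure \ref{F: AddGaussMoves} that no additional move flips a node's sign or changes the node count. Were some move to push a node through a crossing and reverse its sign, the sum (and possibly the product) would fail to be invariant; conversely, the very fact that the number of nodes is claimed invariant already forces each move to be free of node creation and cancellation, leaving only repositioning. This verification amounts to a routine inspection of the finitely many local pictures in Figure \ref{F: AddGaussMoves}, and requires no further computation.
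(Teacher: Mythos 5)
Your argument is correct and follows essentially the same route as the paper: both proofs observe that the moves of Figure \ref{fig:GaussMoves} involve no nodes at all, and that the additional moves of Figure \ref{F: AddGaussMoves} merely reposition nodes without creating, deleting, or re-signing them, so the signed-node multiset (hence its cardinality, sum, and product) is preserved. The only difference is cosmetic — you package the three quantities as functions of the multiset of signs — so nothing further is needed.
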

\begin{proof}
Recall that the marked Reidemeister moves consist of moves shown in figures \ref{fig:GaussMoves} and \ref{F: AddGaussMoves}. Since the moves in Figure \ref{fig:GaussMoves} do not have any nodes and the moves shown in Figure \ref{F: AddGaussMoves} affect neither the number of nodes nor the signs of the nodes in a given marked Gauss diagram. Hence, the proof follows.
\end{proof}

\begin{figure}[H]
	\centering
	\includegraphics[scale=1.4]{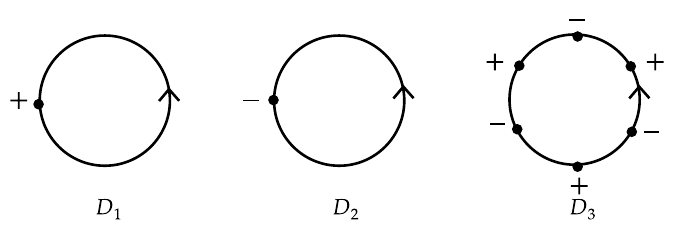}
	\caption{Non-equivalent marked Gauss diagrams.}
	\label{fig:general-Gauss-diagrams-with-isomorphic-group}
\end{figure}

\begin{example}
Consider the diagrams in Figure \ref{fig:general-Gauss-diagrams-with-isomorphic-group}. Note that the group $\Pi_{D_1}$ has presentation $\langle a,v ~~|~~a=a^{v^{-1}} \rangle$
and $\Pi_{D_2}$ has presentation $\langle b,v ~~|~~b=b^{v} \rangle$. Clearly $\Pi_{D_1} \cong \Pi_{D_2} \cong \mathbb{Z}^2$ but diagrams $D_1$ and $D_2$ are not equivalent since the sum of signs of the nodes are not equal. Furthermore, in $D_3$ there are equal number of positive and negative nodes and no chords, and hence $\Pi_{D_3} \cong \mathbb{Z}*\mathbb{Z}$.
\end{example}

\section{Marked virtual link diagrams}\label{S: Marked virtual link diagrams}
In this section, we give an interpretation of marked Gauss diagrams in terms of planar diagrams. 
\medskip

Let $G=(V,E)$ be a directed graph, where $V$ denotes the set of vertices and $E$ denotes the set of directed edges. A {\it diwalk}
is an alternating sequence of vertices and edges $v_0, e_1, v_1, \ldots, v_{n-1}, e_n, v_n$ with edge $e_i$ directed from $v_{i-1} $ to $v_i$, for every  $v_i \in V$ and $e_i \in E$ . A {\it directed cycle} is a diwalk in which all the vertices except the first and last are different. From now onwards, by a {\it cycle} we mean a directed cycle.
\medskip

Beineke and Harary  \cite{BH-1} defined a {\it marked graph} as a directed graph in which each vertex is assigned either positive or negative sign. We define {\it marked cycles} as a marked graph consisting only of cycles and no two cycles share a common vertex. For example, see Figure \ref{F: Illustration of marked cycles}.

\begin{figure}[H]
	\centering
	\includegraphics[scale=1.4]{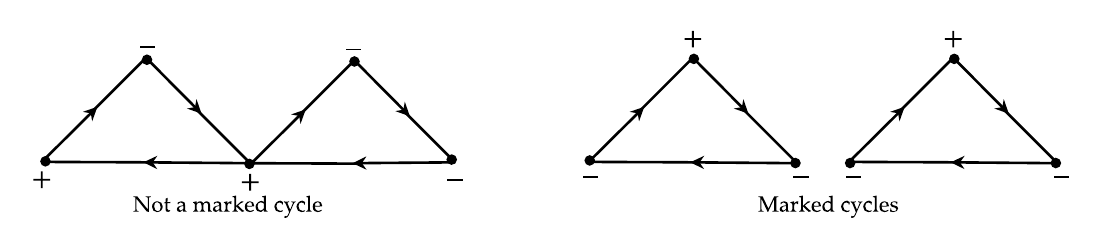}
	\caption{Illustration of marked cycles.}
	\label{F: Illustration of marked cycles}
\end{figure}

Further, Fleming and Mellor \cite{FM-1} defined a {\it virtual spatial graph diagram} as a generic immersion of a directed graph in $\mathbb{R}^2$, where each double point is either a classical crossing or a virtual crossing. Analogously, we can define the following.

\begin{definition}
A {\it marked virtual link diagram} is a generic immersion of marked cycles in $\mathbb{R}^2$ with the information of virtual and classical crossings at double points. If it is a one component diagram, then it is said to be a {\it marked virtual knot diagram}.
\end{definition}

Note that for any given marked Gauss diagram, we can draw a marked virtual link diagram, and the converse also holds. Please refer to  Figure \ref{F: MGDandMVLD} for an illustration.

\begin{figure}[H]
	\centering
	\includegraphics[scale=1.2]{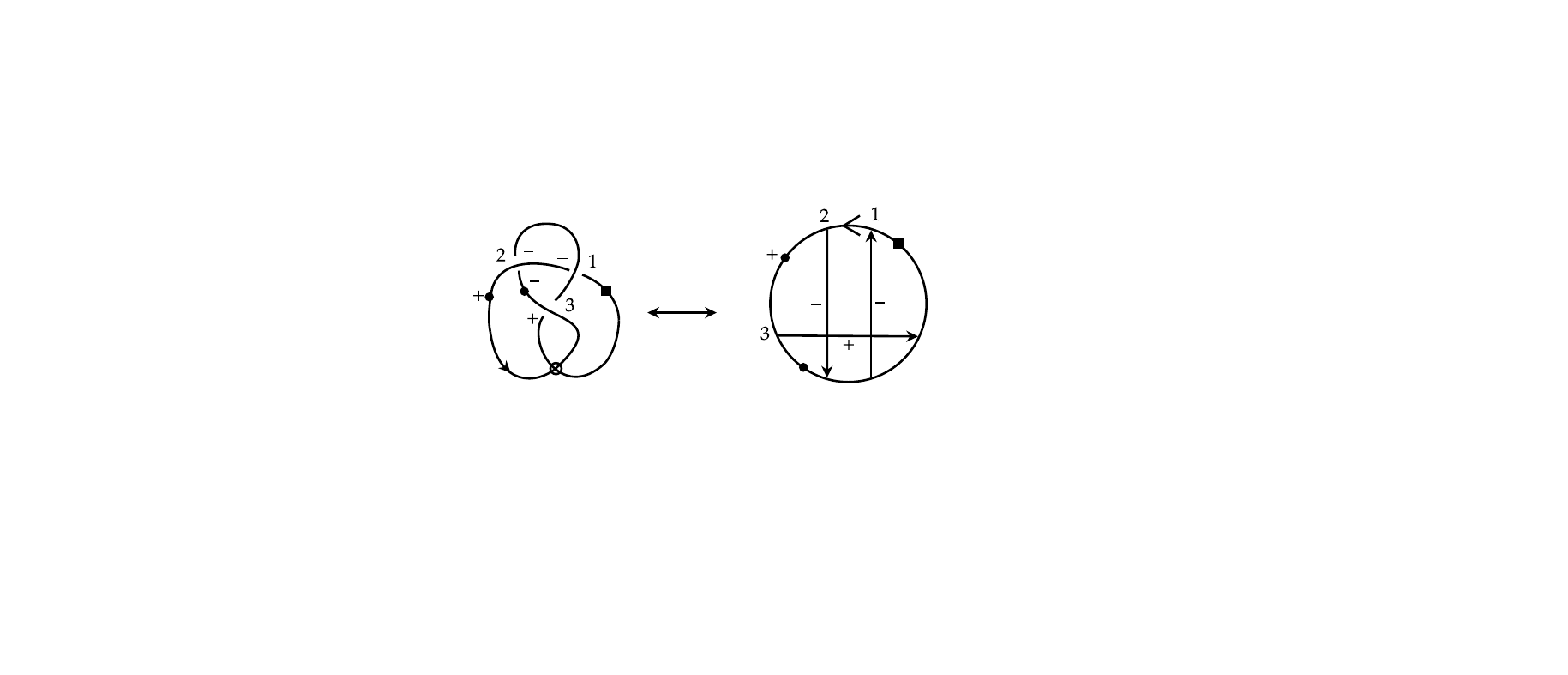}
	\caption{Marked virtual knot diagram and its marked Gauss diagram.} \label{F: MGDandMVLD}
\end{figure}
We say that two marked virtual link diagrams are {\it equivalent} if they are related by a finite sequence of moves shown in Figure \ref{F: MarkedReidemeisterMoves}, and are called as the {\it marked Reidemeister moves}. 
\begin{figure}[H]
	\centering
	\includegraphics[scale=1.2]{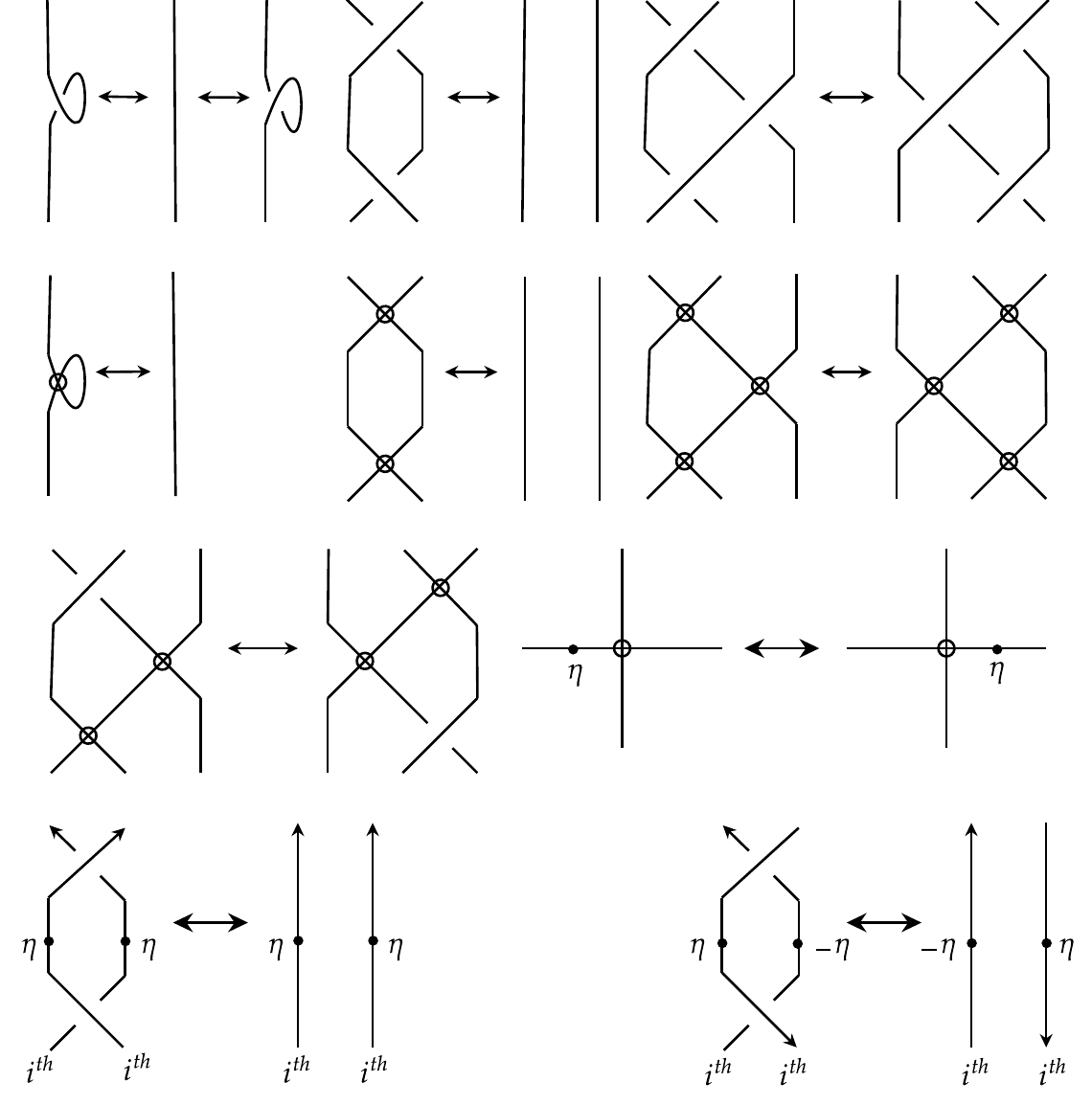}
	\caption{Marked Reidemeister moves.} 
	\label{F: MarkedReidemeisterMoves}
\end{figure}
It is clear that there is a one-to-one correspondence between the set of equivalence classes of marked Gauss diagrams and the set of equivalence classes of marked virtual link diagrams. We note that the moves shown in Figure \ref{F: ForbiddenReidMoves} are forbidden and can not be obtained from the moves shown in Figure \ref{F: MarkedReidemeisterMoves}.
\begin{figure}[H]
	\includegraphics[scale=1]{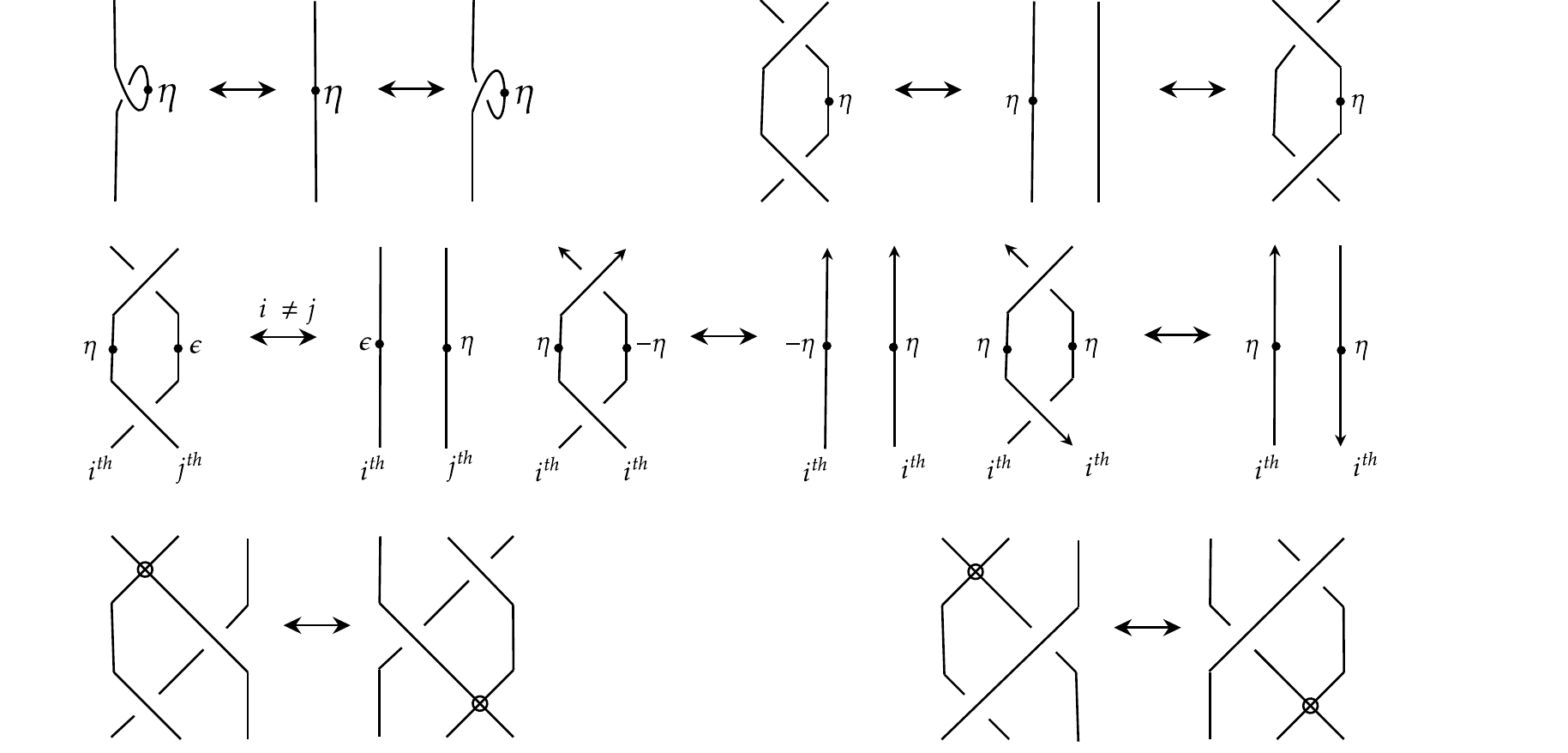}
	\caption{Forbidden Reidemeister moves on marked virtual link diagrams.} \label{F: ForbiddenReidMoves}
\end{figure}

Let $D$ be a marked Gauss diagram. Then we can write a presentation of the group $\Pi_D$ from a marked virtual link diagram representing $D$ as follows. Let $D_L$ be a marked virtual link diagram with $m$ components representing $D$. We begin by enumerating all the components of $D_L$ with numbers from $1$ to $m$ and label the $i^{th}$ component with $v_i$. After this, we divide the diagram into arcs: from one classical crossing to the next classical crossing, from one classical crossing to the next node point, from one node point to the next classical crossing and from one node point to the next node point. Label the arcs as $x_1,x_2,\ldots, x_n$. Then the group $\Pi_D$ has a presentation with generators $x_1, x_2,\ldots,x_n, v_1,v_2, \ldots, v_m$ and defining relations at crossings and nodes as shown in Figure \ref{F: Marked Crossings Relations}, along with the commutativity of all $v_i$'s $(1 \leq i \leq m)$ with each other. Note that there are no relations at virtual crossings.
\begin{figure}[H]
\includegraphics[width=11cm]{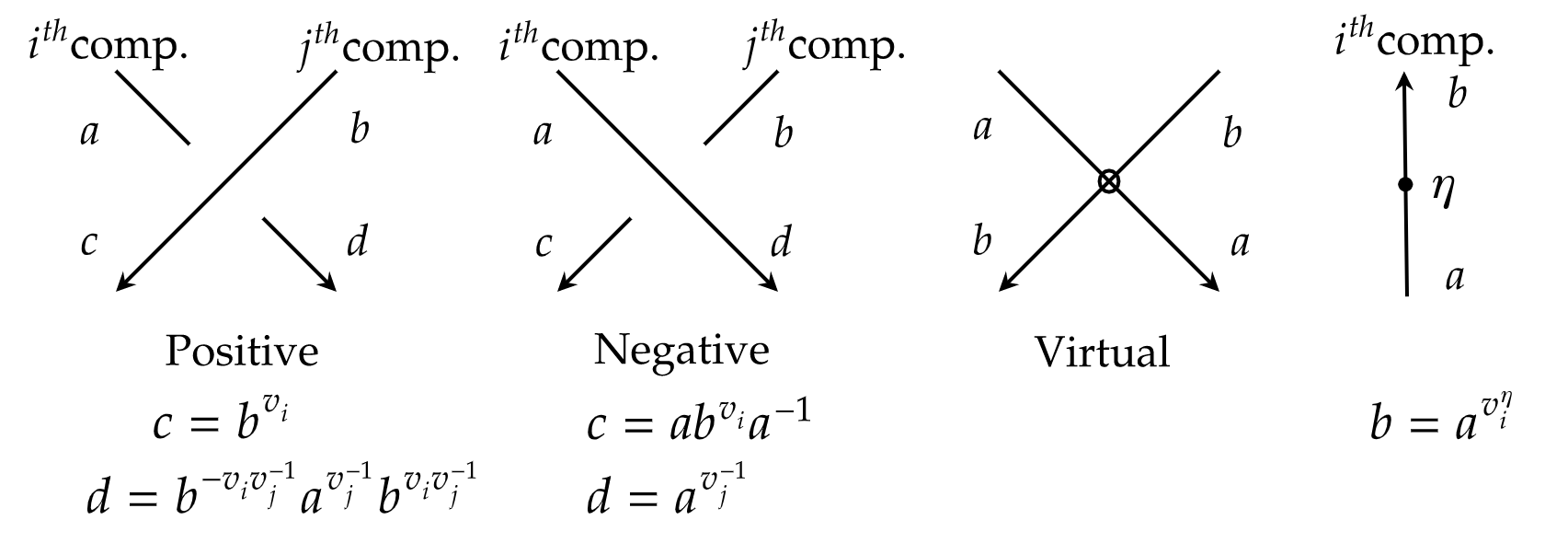}
\caption{Defining relations obtained from crossings and nodes of a marked virtual link diagram.}\label{F: Marked Crossings Relations}
\end{figure}

\section{Realization of irreducible $C_1$-groups}\label{S: Realization-of-irreducible $C_1$-group}
In this section, we prove that every $1$-irreducible $C_1$-presentation of deficiency $1$ or $2$ is the group of some $1$-circle marked Gauss diagram. We first recall the definition of $C$-groups given by Kulikov \cite{Kulikov-1, Kulikov-2}. A group $G$ is called a $C$-{\it group} if it admits a presentation $\langle X~~|~~\mathcal{R} \rangle$, where $X=\{x_1, x_2, \ldots, x_n\}$ and relations $\mathcal{R}$ are of the type $$w_{i,j}^{-1} x_i w_{i,j} = x_j$$ for some $x_i, x_j  \in X$ and some words $w_{i,j}$ in $X^{\pm 1}$. Such a presentation is known as a $C$-{\it presentation}. Gilbert and Howie \cite{GH-1} called these groups as LOG groups. It is established in  \cite{Kulikov-2} that  every $C$-group can be realized as the fundamental group of complement of some $n$-dimensional $(n \geq 2)$ compact orientable manifold without boundary embedded in $\mathbb{S}^{n+2}$. In particular, any classical link group is a $C$-group.
\medskip

We now define $C_m$-groups which are a specific type of $C$-groups. For a non-negative integer $m$, a group $G$ is called a $C_m$-{\it group} if it can be defined by a set of generators $Y = X \sqcup V_m$, where
$X = \{ x_1, x_2, \ldots, x_n\}$, $V_m = \{ v_1, v_2, \ldots, v_m  \}$ and a set of relations $\mathcal{R}$ given by
\begin{align*}
	w_{i,j}^{-1} x_i w_{i,j} &= x_j,~~\mbox{for some}~x_i, x_j  \in X ~\mbox{and some words}~~w_{i,j}~\mbox{in}~Y^{\pm 1},\\
	v_i v_j &= v_j v_i,~~\mbox{for all}~v_i, v_j  \in V_m.
\end{align*}
We call the presentation $\langle Y~~|~~\mathcal{R} \rangle$ as a $C_m$-{\it presentation}. 
\medskip

Notice that all $C_m$-groups are $C_{m-1}$-groups for $m \geq 1$. In particular, all $C_m$-groups are $C$-groups. It is easy to see that the abelianization of a $C_m$-group is a free abelian group. If we put $x_i=1$ for all $i=1,2, \ldots,n$, then we get the free abelian group of rank $m$ whereas, if we put $v_j=1$ for all $j=1,2, \ldots, m$, then we get a $C$-group.
\medskip

A $C$-group is called {\it irreducible} if all generators in its $C$-presentation are conjugates of each other. Analogously, we say that a finitely generated $C_1$-group is {\it $1$-irreducible} if all $x_i \in X$ in a $C_1$-presentation $\langle Y~~|~~\mathcal{R} \rangle$, where $Y=X \sqcup V_1$,  are conjugate to each other. Equivalently, its abelianization is the free abelian group of rank $2$. Next we extend this definition to $C_m$-groups for $m \geq 1$ as follows. 
\medskip

Associate a graph (not directed) $\Gamma_m$ to a $C_m$-presentation $\langle Y~~|~~\mathcal{R} \rangle$ with vertices $x_1, x_2, \ldots, x_n$ and edges $e^{j}_{i}$ between vertices $x_i$ and $x_j$ if there is a relation $w_{i,j}^{-1} x_iw_{i,j}=x_j$ in $\mathcal{R}$. We then say that the $C_m$-presentation $\langle Y~~|~~R \rangle$ with $m\geq 1$ is {\it $m$-irreducible} if the associated graph $\Gamma_m$ has $m$-connected components and the corresponding $C_m$-group is called an {\it $m$-irreducible} group. Equivalently, its abelianization is of rank $2m$. Note that in this case $n \geq m$.

\begin{remark}
	Note that an $m$-irreducible $C_m$-group $(m>0)$ is not an $(m-i)$-irreducible $C_{m-i}$-group, where $1 \leq i \leq m$.
\end{remark}
\medskip

The {\it deficiency} of a group presentation is the number of generators minus the number of relations. The {\it deficiency} of a finitely presented group $G$ is defined as the maximum deficiency of finite group presentations for $G$. It is easy to see that the group associated to a marked Gauss diagram having $m$ number of components is an $m$-irreducible $C_m$-group. The following result is about the deficiency of the groups associated to $1$-circle marked Gauss diagrams.
\medskip

\begin{proposition}\label{P: one-way}
	The group associated to a given $1$-circle marked Gauss diagram is a $1$-irreducible $C_1$-group of deficiency $1$ or $2$ and its second integral homology group is cyclic.
\end{proposition}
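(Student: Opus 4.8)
The plan is to take the $C_1$-group structure and the $1$-irreducibility as already understood (these follow from the conjugation form of the relations in Figure~\ref{Marked-Gauss-Diagram-Relations} and from the fact, noted before the statement, that the abelianisation of $\Pi_D$ is $\mathbb{Z}^2$), and to concentrate on the two quantitative assertions: that $\mathrm{def}(\Pi_D)\in\{1,2\}$ and that $H_2(\Pi_D;\mathbb{Z})$ is cyclic. Both will be extracted from the presentation of $\Pi_D$ read off the diagram, together with one upper bound coming from $H_1$.

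First I would carry out the generator/relation count. Let the given $1$-circle marked Gauss diagram $D$ have $c$ arrows and $d$ nodes. Cutting the single circle at the head and tail of every arrow and at every node produces $2c+d$ cut points, hence $n=2c+d$ arcs $x_1,\dots,x_n$; adjoining the circle label $v$ gives $2c+d+1$ generators, and the commutativity relations among $V_1=\{v\}$ are vacuous. The point to verify against Figure~\ref{Marked-Gauss-Diagram-Relations} is that each of the $2c+d$ cut points contributes exactly one defining relation: a node and an arrow-tail each impose a single relation conjugating the two adjacent arcs by a power of $v$, while an arrow-head imposes a single relation conjugating the two adjacent under-arcs by the over-arc together with $v$. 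Thus this presentation has $2c+d+1$ generators and $2c+d$ relations, so its deficiency equals $1$, and therefore $\mathrm{def}(\Pi_D)\ge 1$. The degenerate diagram with $c=d=0$ must be handled separately: there $\Pi_D=\langle x_1,v\rangle$ is free of rank $2$ and has deficiency $2$.

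Next I would bound the deficiency from above and compute $H_2$. Since $\Pi_D$ is $1$-irreducible, $H_1(\Pi_D;\mathbb{Z})\cong\mathbb{Z}^2$, and the deficiency of a finitely presented group never exceeds the free rank of its abelianisation (abelianising any presentation gives $\mathrm{rank}\,H_1\ge g-r$); hence $\mathrm{def}(\Pi_D)\le 2$, which yields $\mathrm{def}(\Pi_D)\in\{1,2\}$. For the homology, let $K$ be the presentation $2$-complex of the deficiency-$1$ presentation above. Then $\chi(K)=1-(2c+d+1)+(2c+d)=0$, and with $\beta_0(K)=1$ and $\beta_1(K)=2$ this forces $\beta_2(K)=1$; as $H_2$ of a $2$-dimensional complex is free abelian, $H_2(K;\mathbb{Z})\cong\mathbb{Z}$. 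Building a $K(\Pi_D,1)$ from $K$ by attaching cells of dimension $\ge 3$ can only kill classes in $H_2$, so $H_2(\Pi_D;\mathbb{Z})$ is a quotient of $H_2(K;\mathbb{Z})\cong\mathbb{Z}$ and is therefore cyclic.

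I expect the main obstacle to be the relation bookkeeping in the middle step: one must check, reading Figure~\ref{Marked-Gauss-Diagram-Relations} (equivalently Figure~\ref{F: Marked Crossings Relations}), that no cut point produces a second independent relation and that none is missing, so that the count $g-r=1$ is exact rather than a mere inequality; the degenerate configurations (no arrows and no nodes, or a circle carrying only nodes) should be verified by hand. Once the deficiency-$1$ presentation is secured, the passage to $\mathrm{def}\le 2$ via $H_1\cong\mathbb{Z}^2$ and the identification of $H_2(\Pi_D)$ as a quotient of $\mathbb{Z}$ are routine, and the dichotomy between deficiency $1$ and $2$ corresponds precisely to whether $H_2(\Pi_D)$ has rank $1$ or is finite.
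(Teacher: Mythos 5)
Your proof is correct and follows essentially the same route as the paper: read a deficiency-one presentation off the diagram, then use $H_1(\Pi_D)\cong\mathbb{Z}\oplus\mathbb{Z}$ together with the cellular chain complex of the presentation $2$-complex to force the deficiency to be at most $2$ and to control $H_2$. Your explicit arc/relation count and your remark that $H_2(\Pi_D)$ is a quotient of $H_2(K)\cong\mathbb{Z}$ (rather than isomorphic to it, as the paper's proof somewhat loosely states) make the argument, if anything, slightly more careful than the published one.
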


\begin{proof}
	It is easy to see that the group associated to a $1$-circle marked Gauss diagram is a $1$-irreducible $C_1$-group with deficiency greater than or equal to $1$. Let $D$ be a $1$-circle marked Gauss diagram and $\Pi_D$ the group associated to $D$ having deficiency $d$. Then the group $\Pi_D$ has a presentation $\mathcal{P}$ with $n+d$ generators $x_i$ and $n$ relations $r_j$. Let $X$ be the
	$2$-skeleton of the Eilenberg-MacLane space $K(\Pi_D,1)$. To be precise, $X$ is obtained by gluing $n$ many $2$-disks to the one-point union of $n+d$ circles along the relations $r_j$. By construction, $\pi_1(X) \cong \Pi_D$. Then the cellular chain complex of $X$ is
	
	$$\ldots\to 0\to \mathbb{Z} ^n \stackrel{\partial_2}{\to} \mathbb{Z}^{n+d}
	\stackrel{\partial_1}{\to}\mathbb{Z},$$
	where $\partial_1$ is the zero map.	Since $\Pi_D$ is a $1$-irreducible $C_1$-group, $H_1(X)\cong \mathbb{Z}\oplus \mathbb{Z}$. As rank$($coker $\partial_2)+$rank$(\partial_2(\mathbb{Z}^n))=n+d$ and rank$(\partial_2(\mathbb{Z}^n))\leq n$, we have $d \leq 2$. Thus $\Pi_D$ has deficiency either $1$ or $2$.
	Also, it follows that $H_2(X)$ is $0$ for $d=2$ and $\mathbb{Z}$ for $d=1$. Hence, $H_2(\Pi_D) \cong H_2(X)$ is cyclic.
\end{proof}

\begin{corollary}
	Every virtual knot group has deficiency $1$ or $2$, and its second homology group is cyclic.
\end{corollary}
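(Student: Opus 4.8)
The plan is to deduce this directly from Proposition~\ref{P: one-way} after identifying a virtual knot group with the group of a suitable $1$-circle marked Gauss diagram.

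First I would use that a virtual knot $L$ is a one-component virtual link, so any Gauss diagram $D$ representing $L$ consists of a single circle. By the correspondence recorded in Subsection~\ref{GaussDiagramsAndGroups}, namely the proposition asserting $\pi_D \cong G_S(L)$, the virtual knot group $G_S(L)$ is isomorphic to $\pi_D$ for such a $1$-circle Gauss diagram $D$.

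Next I would observe that a Gauss diagram is precisely a marked Gauss diagram with no nodes, and that in this case $\Pi_D \cong \pi_D$, as noted in Section~\ref{S: marked Gauss Diagrams}. Regarding $D$ as a $1$-circle marked Gauss diagram therefore gives $G_S(L) \cong \Pi_D$.

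Finally, I would apply Proposition~\ref{P: one-way} to $D$: it shows that $\Pi_D$ is a $1$-irreducible $C_1$-group of deficiency $1$ or $2$ whose second integral homology group is cyclic, and transporting this conclusion along the isomorphism $G_S(L) \cong \Pi_D$ yields the corollary. There is essentially no obstacle here, since all the genuine content---the bound on the deficiency coming from a rank count on the cellular chain complex of the associated $2$-complex, together with the cyclicity of $H_2$---is already established in Proposition~\ref{P: one-way}. The only point requiring attention is the chain of identifications $G_S(L) \cong \pi_D \cong \Pi_D$, which is routine.
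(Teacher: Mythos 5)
Your proposal is correct and matches the paper's (implicit) argument exactly: the corollary is stated as an immediate consequence of Proposition \ref{P: one-way}, obtained by viewing the $1$-circle Gauss diagram of a virtual knot as a $1$-circle marked Gauss diagram and using the identifications $G_S(L)\cong \pi_D\cong \Pi_D$. Nothing further is needed.
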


It is well-known that if $K$ is a classical knot, then the fundamental group of its complement $\pi_1(\mathbb{S}^3 - K)$ has deficiency $1$.

\begin{example}
	It follows from Remark \ref{R: one component link}, \cite[Proposition 4]{BB-1} and Theorem \ref{isomorphism-of-G_M-and-G_S} that $G_S(K) \cong \pi_1(\mathbb{S}^3 - K) * \langle v \rangle$, and hence it is of deficiency $2$ and $H_2\big(G_S(K)\big)=0$.
\end{example}

\begin{example}\label{ex:Group-of-general-Gauss-diagram}
	Let $G$ be a group having an irreducible $C$-presentation of deficiency $0$ such that $H_2(G) \neq 0$. Let $G^*=G * \langle v \rangle$ be the free product of $G$ and $\langle v \rangle$. Then $H_2(G^*) \neq 0$ since $H_2(G^*) \cong H_2(G) \oplus H_2(\mathbb{Z}) \cong H_2(G)$. Assuming Theorem \ref{RealizationOfirreducible $C_1$-presentation}, we have that $G^*$ is the group of some marked Gauss diagram, so we get that $G^*$ has deficiency $1$. This illustrates the existence of marked Gauss diagrams whose associated groups have deficiency $1$. Gordon \cite{Gordon-1} gave a family of irreducible $C$-presentations of deficiency $0$ whose second homology groups are $\mathbb{Z}$. Moreover, one can find an irreducible $C$-presentation of deficiency $0$ with second homology group of order $2$ in \cite{BMS-1}.
\end{example}

Let $T$ be a Gauss diagram corresponding to the trivial knot. The following result is of independent interest. 
\begin{proposition}
	There are infinitely many marked Gauss diagrams with associated group isomorphic to $\Pi_T = \mathbb{Z}* \mathbb{Z}$.
\end{proposition}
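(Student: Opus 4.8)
The plan is to exhibit an explicit infinite family of pairwise non-equivalent $1$-circle marked Gauss diagrams, all carrying no chords, whose associated groups are free of rank $2$. Recall first that $\Pi_T = \langle x, v \rangle \cong \mathbb{Z} * \mathbb{Z}$, since the Gauss diagram $T$ of the trivial knot is a single circle with no arrows and no nodes, so that its only generators are one arc $x$ and the circle label $v$, with no relations. I would build directly on the computation in the example following Figure \ref{Marked-Gauss-Diagram-Relations}, where a $1$-circle diagram with equal numbers of positive and negative nodes and no chords was already observed to yield $\mathbb{Z} * \mathbb{Z}$.

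For each integer $k \geq 1$, let $D_k$ be the $1$-circle marked Gauss diagram having no arrows and exactly $2k$ nodes placed around the circle, of which $k$ are positive and $k$ are negative. First I would compute $\Pi_{D_k}$. Cutting the circle at the $2k$ nodes produces arcs $x_1, x_2, \ldots, x_{2k}$ in cyclic order, and since there are no chords the set $\mathcal{R}$ consists only of the node relations of Figure \ref{Marked-Gauss-Diagram-Relations}, each of which expresses the outgoing arc as a $v^{\pm 1}$-conjugate of the incoming arc, the sign of the exponent being determined by the sign of the node. Using these relations as Tietze transformations one eliminates $x_2, \ldots, x_{2k}$ in turn, writing each as a conjugate of $x_1$ by a power of $v$; traversing the whole circle then leaves a single relation of the form $x_1 = v^{S} x_1 v^{-S}$, where $S$ is the sum of the signs of the nodes. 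Because $D_k$ has equally many positive and negative nodes, $S = k - k = 0$, so this relation is trivial and $\Pi_{D_k} = \langle x_1, v \rangle \cong \mathbb{Z} * \mathbb{Z} = \Pi_T$.

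It then remains to check that $T, D_1, D_2, \ldots$ are pairwise non-equivalent, so that they furnish infinitely many genuinely distinct diagrams rather than infinitely many copies of one. This is immediate from the invariance of the number of nodes under the marked Reidemeister moves established earlier: $T$ has $0$ nodes while $D_k$ has $2k$ nodes, so no two members of the family are related by a finite sequence of marked Reidemeister moves. Hence we obtain infinitely many marked Gauss diagrams whose associated group is isomorphic to $\Pi_T = \mathbb{Z} * \mathbb{Z}$.

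The only delicate point I anticipate is the bookkeeping of the node relations, namely verifying that the conjugating word telescopes correctly to $v^{S}$ as one goes once around the circle. Once the convention of Figure \ref{Marked-Gauss-Diagram-Relations} is fixed, the exponents of $v$ simply add (the powers of $v$ commute with one another), and the vanishing of $S$ forces the surviving relation to collapse; the sign convention itself is irrelevant to the conclusion, since opposite-signed nodes always contribute opposite powers and the family is balanced by construction. No further obstacle is expected, as the non-equivalence is handed to us directly by the node-count invariant.
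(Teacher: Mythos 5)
Your proposal is correct and follows essentially the same route as the paper, which simply takes the family of $1$-circle marked Gauss diagrams with no chords and equal numbers of positive and negative nodes and observes that each has group $\mathbb{Z} * \mathbb{Z}$. Your additional care in telescoping the node relations and in invoking the node-count invariant to distinguish the diagrams pairwise makes explicit what the paper leaves implicit, but introduces nothing new in substance.
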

\begin{proof}
	Let $D$ be a $1$-circle marked Gauss diagram with equal number of positive and negative nodes and having no chords. Then clearly, $\Pi_D \cong \Pi_T = \mathbb{Z} * \mathbb{Z}$.
\end{proof}

The aim of this section is to prove that every $1$-irreducible $C_1$-presentation of deficiency $1$ or $2$ can be realized as the group of some $1$-circle marked Gauss diagram. But before this, we need to define some terms.  
\begin{definition}\label{cyclic irreducible $C_1$-presentation}
A \textit{cyclic irreducible $C_1$-presentation} is a $1$-irreducible $C_1$-presentation of the form
$$
\langle x_1, x_2, \ldots, x_n,v~~|~~r_1, r_2, \ldots , r_n \rangle,
$$
where the relation $r_j$ is of the form $x_{j+1} ^{-1} x_j ^{w_j}$ for $j \in \mathbb{Z}_n$, and  $w_j$ is a word in alphabets $x_i^{\pm 1}$ and $v^{\pm 1}$.
\end{definition}

Now considering a cyclic irreducible $C_1$-presentation defined in Definition \ref{cyclic irreducible $C_1$-presentation}, we define a realizable irreducible $C_1$-presentation as follows.
\begin{definition}\label{realizable irreducible $C_1$-presentation}
A \textit{realizable irreducible $C_1$-presentation} is a cyclic irreducible $C_1$-presentation where each $w_j$ is an element of the set $\{ v^{-\epsilon}x_i^{\epsilon}, v^{\epsilon}~~|~~ i=1, 2, \dots , n \text{ and } \epsilon=\pm 1 \}$ and satisfies the following conditions:
\begin{enumerate}
	\item If $w_k=v^{-1}x_p$ for some $p$, then $w_p=v$ and the word $w_j$ is neither equal to $v^{-\epsilon} x_p^{\epsilon}$ nor $v^{-\epsilon} x_k^{\epsilon}$ for any $j \neq k$.
	\item If $w_k=vx_p^{-1}$ for some $p$, then $w_p=v^{-1}$ and the word $w_j$ is neither equal to $v^{-\epsilon} x_p^{\epsilon}$ nor $v^{-\epsilon} x_k^{\epsilon}$ for any $j \neq k$. 
\end{enumerate}
\end{definition}

The proof of the following result is similar to \cite[Lemma 2]{Kim-1}.

\begin{proposition}\label{irreducible $C_1$-presentation-to-cyclic irreducible $C_1$-presentation}
Any $1$-irreducible $C_1$-presentation of deficiency $1$ or $2$ can be transformed to a cyclic irreducible $C_1$-presentation.
\end{proposition}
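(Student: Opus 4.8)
The plan is to argue by Tietze transformations on the graph $\Gamma_1$ of conjugation relations, reducing an arbitrary such presentation to the cyclic normal form. First I would record the combinatorial setup. Since $m=1$, the generating set is $Y=\{x_1,\dots,x_n\}\sqcup\{v\}$ and, as $V_1$ is a singleton, there are no commutativity relations; hence every relation is a conjugation relation $x_j=x_i^{\,w_{i,j}}$ and contributes exactly one edge of $\Gamma_1$. The presentation has $n+1$ generators and $E$ relations, so its deficiency is $d=(n+1)-E$. By $1$-irreducibility $\Gamma_1$ is connected, whence $E\ge n-1$; deficiency $2$ forces $E=n-1$ (so $\Gamma_1$ is a spanning tree) and deficiency $1$ forces $E=n$ (so $\Gamma_1$ is connected with a single independent cycle). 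Throughout, reindexing the $x_i$ and replacing a relation by its product or inverse with others are Tietze transformations that preserve the presented group, keep the relations of conjugation type, and preserve connectedness of $\Gamma_1$, hence $1$-irreducibility.

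The core step is to linearise a spanning tree. Fix $x_1$, choose a spanning tree $T$ of $\Gamma_1$, and for each $i$ follow the unique $T$-path from $x_1$ to $x_i$, composing (and inverting where needed) the conjugators of the tree relations to obtain a word $u_i$ in $Y^{\pm1}$ with $x_i=x_1^{\,u_i}$ (so $u_1=1$). The set $\{x_i=x_1^{\,u_i}:2\le i\le n\}$ is Tietze-equivalent to the $n-1$ tree relations. After fixing any ordering $x_1,\dots,x_n$ of the vertices with $x_1$ as base, I would rewrite these as the path relations
$$ r_j:\quad x_{j+1}=x_j^{\,w_j},\qquad w_j=u_j^{-1}u_{j+1},\quad j=1,\dots,n-1, $$
which present the same group and realise $T$ as the directed Hamiltonian path $x_1\to x_2\to\cdots\to x_n$.

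It remains to close the path into a cycle, and here the two deficiencies diverge. If $d=1$, the unique non-tree edge yields one further relation $x_b=x_a^{\,w'}$; substituting $x_a=x_1^{\,u_a}$, $x_b=x_1^{\,u_b}$ and $x_n=x_1^{\,u_n}$ rewrites it as $x_1=x_n^{\,w_n}$ for a suitable word $w_n$ (for instance $w_n=u_n^{-1}u_a\,w'\,u_b^{-1}$), giving the closing relation $r_n=x_{1}^{-1}x_n^{\,w_n}$. If $d=2$, the graph is already a tree, so I would simply append the relation $r_n=x_1^{-1}x_n^{\,u_n^{-1}}$; since $x_n=x_1^{\,u_n}$ is a consequence of $r_1,\dots,r_{n-1}$, this relation is redundant and does not change the group. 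In both cases the relations $r_1,\dots,r_n$ form the directed $n$-cycle $x_1\to\cdots\to x_n\to x_1$, each of the required form $x_{j+1}^{-1}x_j^{\,w_j}$ with $x_{n+1}=x_1$, and $\Gamma_1$ is now a single $n$-cycle, hence connected; this is the desired cyclic irreducible $C_1$-presentation.

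The main obstacle is bookkeeping rather than conceptual: one must check that each rewriting is a genuine Tietze transformation preserving both the conjugation-relation form and $1$-irreducibility, and one must treat the two cases correctly. In the $d=2$ case the cyclic presentation is obtained by \emph{adding a redundant relation}, so it has deficiency $1$ even though the original had deficiency $2$ (the group, not the deficiency, is what is preserved), while in the $d=1$ case the single extra relation, possibly a self-loop $x_i=x_i^{\,w}$, must be converted into the cycle-closing relation via the expressions $x_i=x_1^{\,u_i}$. Small values of $n$ (e.g. the degenerate case $n=1$ giving $\mathbb{Z}*\mathbb{Z}$) should be verified separately.
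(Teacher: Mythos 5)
Your argument is correct and rests on the same elementary move as the paper's: composing two conjugation relations $x_j = x_i^{w}$ and $x_k = x_j^{u}$ into $x_k = x_i^{wu}$ is a Tietze transformation that re-routes an edge of $\Gamma_1$, and $1$-irreducibility supplies the connectivity both proofs need. The global organization differs, though. The paper first pads the relation set to exactly $n$ relations (duplicating one relation in the deficiency-$2$ case) so that the connected graph $\Gamma_1$ is unicyclic, and then grows its unique cycle one vertex at a time by edge-sliding until the cycle is Hamiltonian. You instead collapse a spanning tree to the star relations $x_i = x_1^{u_i}$, re-expand these as a Hamiltonian path $x_{j+1} = x_j^{u_j^{-1}u_{j+1}}$, and only then close the cycle --- transporting the unique non-tree relation to $x_1 = x_n^{u_n^{-1}u_a w' u_b^{-1}}$ when the deficiency is $1$, or appending the redundant relation $x_1 = x_n^{u_n^{-1}}$ when it is $2$. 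Your version makes the two deficiency cases, and the degenerate configurations (self-loops, doubled edges, $n=1$), completely explicit, which is where the paper's cycle-growing step is slightly delicate; the cost is heavier bookkeeping with the words $u_i$. Both are valid, and the resulting cyclic presentations agree up to the choice of Hamiltonian ordering.
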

\begin{proof}
Let $\mathcal{P}= \langle x_1, \ldots, x_n,v ~~|~~r_1, \ldots, r_m \rangle$ be a $1$-irreducible $C_1$-presentation of deficiency $1$ or $2$, that is either $m=n$ or $n-1$. If $m=n-1$, then we add a relation $r_{m+1}=r_m$ and therefore, we assume $m=n$. Next, we consider the graph $\Gamma$ associated to the presentation $\mathcal{P}$ as defined in Section \ref{S: prelim}. We observe that if $\Gamma$ has two edges $e_j^i$ and $e_k^j$ meeting at the vertex $x_j$, then there are relations of the form $x_i^{-1}x_j^{w_{j,i}}$ and $x_j^{-1} x_k^{w_{k,j}}$ in $\mathcal{P}$. Note that removing relation $x_i^{-1}x_j^{w_{j,i}}$ and adding relation $x_i^{-1}x_k^{w_{k,j}w_{j,i}}$ corresponds to an operation on $\Gamma$ of removing the edge $e_j^i$ and adding edge $e_k^i$. Clearly, this operation does not change the underlying group. Since $\Gamma$ is a connected graph and the number of edges is equal to the  number of vertices, $\Gamma$ has exactly one cycle $C$. Now, if the length $l(C)$ of cycle $C$ is $n$, then $\mathcal{P}$ is cyclic irreducible $C_1$-presentation and if not, then because $\Gamma$ is connected, there is an edge $e_j ^i$ such that $x_i$ is in $C$ and $x_j$ is not in $C$. Using the above operation, we get a graph $\Gamma '$ with cycle $C'$ containing all vertices of $C$ and $x_j$, and $l(C')=l(C)+1$. Thus, after finitely many steps we obtain a graph with cycle of length $n$, which gives the desired result.
\end{proof}

\begin{proposition}\label{cyclic irreducible $C_1$-presentation-to-realizable-irreducible $C_1$-presentation}
Every cyclic irreducible $C_1$-presentation can be transformed to a realizable irreducible $C_1$-presentation.
\end{proposition}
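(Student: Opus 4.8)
The plan is to start from a cyclic irreducible $C_1$-presentation $\langle x_1, \ldots, x_n, v \mid r_1, \ldots, r_n\rangle$ with relations $r_j : x_{j+1}^{-1} x_j^{w_j}$ (indices mod $n$) and to reach the realizable form through a finite sequence of Tietze transformations that preserve both the group and the cyclic irreducible $C_1$ structure, so that at every stage the relations continue to form a single cycle $x_1 \to x_2 \to \cdots \to x_n \to x_1$ and all the $x_i$ stay conjugate. I would use two elementary moves repeatedly. The first is \emph{subdivision}: a relation $x_{j+1} = x_j^{uu'}$ is replaced by the pair $z = x_j^{u}$ and $x_{j+1} = z^{u'}$ after adjoining a new generator $z$; this lengthens the cycle by one and is a Tietze transformation. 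The second is \emph{node-pair insertion}: one adjoins two generators $x', x''$ together with the node relations $x' = x^{v^{\epsilon}}$ and $x'' = (x')^{v^{-\epsilon}}$, so that $x'' = x$, inserting a canceling pair of nodes into the cycle. The latter is precisely the algebraic shadow of the node moves of Figure \ref{F: AddGaussMoves}, and since the two inserted relations compose to the identity on $x$, it leaves the group unchanged.

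First I would reduce every conjugating word to a single letter of the alphabet $S = \{\, v^{\epsilon},\ v^{-\epsilon} x_i^{\epsilon} \,\}$. The key observation is that $S$ generates the whole group on $\{x_i, v\}$, since $x_i = v\cdot(v^{-1} x_i)$ and $x_i^{-1} = (v x_i^{-1})\cdot v^{-1}$, while $v^{\pm 1}\in S$ already. Writing each $w_j$ as a product $u_1 u_2 \cdots u_\ell$ of elements of $S$ and applying subdivision $\ell-1$ times, I replace $r_j$ by a chain of elementary relations in which every conjugating word lies in $S$. After doing this for all $j$, the presentation is still cyclic irreducible $C_1$ and each $w_j$ is now either a \emph{node word} $v^{\pm 1}$ or a \emph{crossing word} $v^{-\epsilon} x_p^{\epsilon}$.

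Next I would enforce the pairing and distinctness of conditions (1) and (2). Reading along the strand carrying the over-arc of a crossing word $w_k = v^{-\epsilon} x_p^{\epsilon}$, I insert a canceling node pair whose first node has sign $\epsilon$, adjoining $x_p', x_p''$ with $x_p' = x_p^{v^{\epsilon}}$ and $x_p'' = (x_p')^{v^{-\epsilon}} = x_p$; then $x_p$ is immediately followed by a node relation $w_p = v^{\epsilon}$, exactly as conditions (1) and (2) demand, and the group is unchanged. If several crossings pass under the same strand, I give each its own inserted node pair and rewrite the corresponding crossing word to reference the distinct over-arc created for it (which equals the original over-arc in the group via the inserted node relations); this makes all conjugating words $v^{-\epsilon} x_p^{\epsilon}$ pairwise distinct and, by confining each under-arc $x_k$ between its incoming cut and its under-crossing point, prevents any $x_k$ from also occurring as an over-arc. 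Since every insertion and rewriting is group-preserving and keeps the presentation cyclic irreducible $C_1$, finitely many of them yield a realizable irreducible $C_1$-presentation of the same group.

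The main obstacle is the simultaneous bookkeeping of this last step: inserting nodes to pair one crossing word with its node, or to separate two coincident conjugating words, reshuffles the cyclic indexing and can in principle create new coincidences or disturb pairings already achieved. The delicate point is therefore to order the insertions and choose the signs of the inserted node pairs so that a pairing, once established for a crossing, survives all later moves, and so that the number of remaining violations of the distinctness clauses strictly decreases; an induction on the number of crossing words, modelled on the argument of \cite[Lemma 2]{Kim-1}, then guarantees termination. Checking that each node-pair insertion is genuinely a Tietze transformation (i.e.\ that $x'' = x$) and that $1$-irreducibility is preserved throughout is routine, since all relations introduced are conjugations.
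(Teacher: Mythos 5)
Your proposal is correct and follows essentially the same route as the paper: subdivide each conjugating word into single-letter pieces by Tietze transformations that lengthen the cycle, then insert canceling node pairs $x'=x^{v^{\epsilon}}$, $x''=(x')^{v^{-\epsilon}}$ at the over-arcs (redirecting other occurrences to the fresh generator $x''$) to enforce the pairing and distinctness conditions. The only cosmetic difference is that you decompose directly into the alphabet $\{v^{\epsilon}, v^{-\epsilon}x_i^{\epsilon}\}$, whereas the paper first reduces to single generators and then converts each bare $x_k^{\epsilon}$-conjugation into a node followed by a crossing word; your explicit attention to termination and to preserving already-established pairings is, if anything, more careful than the paper's.
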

\begin{proof}
Consider a cyclic irreducible $C_1$-presentation $\mathcal{P}$ as defined in Definition \ref{cyclic irreducible $C_1$-presentation}. Then $\mathcal{P}$ can be transformed into a realizable irreducible $C_1$-presentation in the following steps:
\begin{itemize}
\item\label{Step1} \textit{Step $1$:} Make each $w_j$ one of the letter in $\{ x_1^{\pm 1}, \ldots, x_n^{\pm 1}, v^{\pm 1} \}$. For example, let $w_j= x_4 x_3 v^{-1}$ and $r_j= x_{j+1}^{-1}x_j^{w_j}$. Then in $\mathcal{P}$, add two more generators say $x_j'$ and $x_j''$, remove the relation $r_j$ and add three more relations $x_j'^{-1} x_j^{x_4}$, ${x_j''}^{-1} x_j'^{x_3}$ and $x_{j+1}^{-1}{x_j''}^{v^{-1}}$. Thus we get a new cyclic irreducible $C_1$-presentation presenting the same group. Moreover, we can assume that in the new cyclic irreducible $C_1$-presentation there are no relations of the type $r_j=x_{j+1}^{-1} x_j^{x_j ^{\epsilon}}$.

\item \textit{Step $2$:} If $r_j=x_{j+1}^{-1} x_j^{x_k^{\epsilon}}$  $(x_k \neq x_j)$, then remove the relations $r_j$ and $r_k=x_{k+1}^{-1} x_k^{w_k}$, and add three generators $x_{k,1}, x_{k,2}, X_j$, five relations $x_{k,1}=x_k^{v^{\epsilon}}, x_{k,2}=x_{k,1}^{v^{-\epsilon}}, X_j=x_j^{v^{\epsilon}},x_{j+1}=X_j^{v^{-\epsilon}x_k^{\epsilon}}, x_{k+1}=x_{k,2}^{w_k}$ in $\mathcal{P}$. Moreover, replace $x_k$ by $x_{k,2}$ in words $w_i$ in $\mathcal{P}$ for $i \neq j$.
\end{itemize}
\end{proof}

As a consequence, we have the following result.

\begin{corollary}\label{irreducible $C_1$-presentation-to-realizable-irreducible $C_1$-presentation}
Any $1$-irreducible $C_1$-presentation of deficiency $1$ or $2$ can be transformed to a realizable irreducible $C_1$-presentation.
\end{corollary}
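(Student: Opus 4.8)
The plan is to obtain this corollary as a direct composition of the two preceding propositions, since each asserts that one class of presentations can be transformed into another while preserving the underlying group. Because the notion of being ``transformed to'' is realized concretely by Tietze-type operations (adding or removing generators together with defining relations, and the edge-replacement operation on the associated graph $\Gamma$ described in the proof of Proposition \ref{irreducible $C_1$-presentation-to-cyclic irreducible $C_1$-presentation}), such transformations compose: applying one sequence followed by another again yields a presentation of the same group. Thus the corollary requires no new construction, only the observation that transitivity holds.

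Concretely, I would start from an arbitrary $1$-irreducible $C_1$-presentation $\mathcal{P}$ of deficiency $1$ or $2$. First I would invoke Proposition \ref{irreducible $C_1$-presentation-to-cyclic irreducible $C_1$-presentation} to transform $\mathcal{P}$ into a cyclic irreducible $C_1$-presentation $\mathcal{P}'$; this step is where the graph-theoretic argument does its work, normalizing the shape of the relations so that the relation indices run cyclically through $\mathbb{Z}_n$ and the associated graph $\Gamma$ has a single cycle passing through all $n$ vertices. Then I would apply Proposition \ref{cyclic irreducible $C_1$-presentation-to-realizable-irreducible $C_1$-presentation} to $\mathcal{P}'$, which through its two-step procedure (first reducing each conjugating word $w_j$ to a single letter, then forcing every $w_j$ into the prescribed set $\{\, v^{-\epsilon}x_i^{\epsilon},\, v^{\epsilon}\,\}$ subject to conditions (1) and (2)) produces a realizable irreducible $C_1$-presentation $\mathcal{P}''$ presenting the same group.

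Since each of these transformations preserves the isomorphism type of the presented group, the composite transformation carries $\mathcal{P}$ to the realizable irreducible $C_1$-presentation $\mathcal{P}''$, which is exactly the assertion of the corollary. I do not anticipate any genuine obstacle here: the substance of the argument is entirely contained in the two propositions, and the only thing to verify is that the class of cyclic irreducible $C_1$-presentations produced by the first proposition is precisely the input class required by the second, which is immediate from Definition \ref{cyclic irreducible $C_1$-presentation}. The one point worth stating explicitly in the write-up is that these operations are group-preserving, so that chaining them is legitimate.
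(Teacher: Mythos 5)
Your proposal is correct and matches the paper exactly: the corollary is stated there as an immediate consequence of Propositions \ref{irreducible $C_1$-presentation-to-cyclic irreducible $C_1$-presentation} and \ref{cyclic irreducible $C_1$-presentation-to-realizable-irreducible $C_1$-presentation}, obtained by composing the two group-preserving transformations just as you describe.
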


The proof of the following theorem is along the same lines as in \cite[Theorem 3]{Kim-1}.

\begin{theorem}\label{RealizationOfirreducible $C_1$-presentation}
Any $1$-irreducible $C_1$-presentation of deficiency $1$ or $2$ can be realized as the group of a marked Gauss diagram.
\end{theorem}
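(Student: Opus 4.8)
The plan is to reduce the general statement to a concrete combinatorial construction, using the machinery already assembled in this section. By Corollary~\ref{irreducible $C_1$-presentation-to-realizable-irreducible $C_1$-presentation}, any $1$-irreducible $C_1$-presentation of deficiency $1$ or $2$ can be transformed, without changing the underlying group, into a \emph{realizable} irreducible $C_1$-presentation in the sense of Definition~\ref{realizable irreducible $C_1$-presentation}. Hence it suffices to show that every realizable irreducible $C_1$-presentation is the group $\Pi_D$ of some $1$-circle marked Gauss diagram $D$. This is the crux, and the whole proof becomes a dictionary between the relators $r_j = x_{j+1}^{-1} x_j^{w_j}$ and the local pictures (arrows and nodes) that generate relations in Figure~\ref{Marked-Gauss-Diagram-Relations}.

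First I would fix the cyclic structure. Because the presentation is cyclic, the generators $x_1, \dots, x_n$ are naturally arranged around a single oriented circle, each $x_j$ labelling one arc, with the cut points between consecutive arcs $x_j$ and $x_{j+1}$ (indices mod $n$) being exactly the features that must produce the relation $r_j$. The key observation is that, after passing to a realizable presentation, each word $w_j$ lies in the restricted alphabet $\{\, v^{-\epsilon} x_i^{\epsilon},\, v^{\epsilon} \mid i=1,\dots,n,\ \epsilon = \pm 1 \,\}$, and these are precisely the two shapes of relation that a marked Gauss diagram can manufacture: a relation of the form $x_{j+1} = x_j^{\,v^{\epsilon}}$ comes from a single signed \emph{node} sitting between the two arcs, while a relation of the form $x_{j+1} = x_j^{\,v^{-\epsilon} x_p^{\epsilon}}$ records an arc passing over/under a crossing whose other strand is $x_p$, with a node accounting for the $v^{\pm 1}$ factor. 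So the construction is: place a node (of the appropriate sign) at cut points of the first type, and place the head (or tail) of a signed arrow, together with an adjacent node, at cut points of the second type.

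The step I expect to be the main obstacle is \emph{consistency of the arrow data}. An arrow in a Gauss diagram has both a head and a tail, and each endpoint imposes one relation on the two arcs it separates; so I cannot freely decorate each cut point in isolation — the appearances of a given $x_p$ as a ``crossing partner'' must pair up into genuine arrows with matching signs and a matching node structure at the opposite endpoint. This is exactly what conditions (1) and (2) of Definition~\ref{realizable irreducible $C_1$-presentation} are engineered to guarantee: when $w_k = v^{-1} x_p$ appears (the head of an arrow on arc $x_k$ with partner $x_p$), the requirement that $w_p = v$ supplies the matching tail relation at the arc $x_p$, and the clause forbidding $w_j = v^{-\epsilon} x_p^{\epsilon}$ or $v^{-\epsilon} x_k^{\epsilon}$ for $j \neq k$ ensures that no arc is asked to be an endpoint of two different arrows at once. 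Thus each crossing-type word and its mate assemble into a single signed chord, and each remaining $v^{\pm 1}$ word becomes a lone node.

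Once the diagram $D$ is built, I would close the argument by reading off its group. Cutting the circle at every arrow endpoint and every node recovers the arcs $x_1, \dots, x_n$ and the single circle label $v$, and Figure~\ref{Marked-Gauss-Diagram-Relations} yields exactly the relations $r_j$ of the realizable presentation (a node gives $x_{j+1} = x_j^{v^{\pm 1}}$, a chord endpoint gives the conjugation by $v^{-\epsilon} x_p^{\epsilon}$), together with the automatic relation $v v = v v$, which is vacuous for a single circle. Therefore $\Pi_D$ is presented by precisely the realizable irreducible $C_1$-presentation we started from, and composing with Corollary~\ref{irreducible $C_1$-presentation-to-realizable-irreducible $C_1$-presentation} shows that the original arbitrary $1$-irreducible $C_1$-group of deficiency $1$ or $2$ is the group of the marked Gauss diagram $D$. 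I would present the bookkeeping as an explicit labelling algorithm around the circle and, following \cite[Theorem~3]{Kim-1}, illustrate the two local replacements with a small worked example rather than spelling out every index.
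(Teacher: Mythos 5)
Your proposal follows essentially the same route as the paper: reduce to a realizable irreducible $C_1$-presentation via Corollary~\ref{irreducible $C_1$-presentation-to-realizable-irreducible $C_1$-presentation}, arrange the $x_j$ as arcs of a single oriented circle, turn each $w_j=v^{\epsilon}$ into a signed node and each $w_j=v^{-\epsilon}x_k^{\epsilon}$ into the head of a signed chord whose tail sits at the $x_k/x_{k+1}$ junction, with conditions (1)--(2) of Definition~\ref{realizable irreducible $C_1$-presentation} guaranteeing the head/tail pairing. One small inaccuracy: the phrase ``together with an adjacent node'' is not needed (and would add a spurious relation), since the $v^{\pm1}$ factor at an arrowhead is already built into the arrow relations of Figure~\ref{Marked-Gauss-Diagram-Relations}, as your final paragraph correctly reads off.
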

\begin{proof}
Let $\mathcal{P}$ be a $1$-irreducible $C_1$-presentation of deficiency $1$ or $2$. By Corollary \ref{irreducible $C_1$-presentation-to-realizable-irreducible $C_1$-presentation}, we can assume that $\mathcal{P}$ is a realizable irreducible $C_1$-presentation having $n+1$ generators $\{x_i, v ~|~ i= 1, 2, \dots, n\}$ and $n$ relations $r_1, \ldots, r_n$. Next, we consider a circle with anticlockwise orientation and mark $n$ points on it, thereby dividing the circle into $n$ arcs. We then label the obtained arcs as $x_1, \ldots, x_n$ successively in the anticlockwise direction. Based on the type of a relation, we perform the following steps on the circle.
\begin{itemize}
\item If $r_j = x_{j+1} ^{-1} x_j ^{v^{-\epsilon} x_k ^{\epsilon}}$, then we attach the tail of a chord at the point where the arcs $x_k$ and $x_{k+1}$ meet, and attach the head of the same chord at the point where the arcs $x_j$ and $x_{j+1}$ meet. Also, assign sign $\epsilon$ to the chord.
\item If $r_j = x_{j+1}^{-1} x_j ^{v^{\epsilon}}$ and there is no relation of the type $x_{k+1}=x_k^{v^{-\epsilon} x_j^{\epsilon}}$, then put a node with $\epsilon$ sign on the point where arcs $x_j$ and $x_{j+1}$ meet.
\end{itemize}
Further, it is easy to check that the group of the obtained marked Gauss diagram has presentation $\mathcal{P}$.
\end{proof}

As a result of the previous theorem, it is clear that the group $G^*$ in Example \ref{ex:Group-of-general-Gauss-diagram} corresponds to some marked Gauss diagram.

\begin{corollary}
If $G$ has an irreducible $C$-presentation of deficiency $1$, then the group $G^*=G * <v>$ is a $1$-irreducible $C_1$-group of deficiency $2$.
\end{corollary}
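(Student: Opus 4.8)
The plan is to exhibit an explicit $C_1$-presentation of $G^*$ of deficiency $2$, verify that it is $1$-irreducible, and then pin the deficiency at exactly $2$ using the homological constraints already assembled in Proposition \ref{P: one-way} (equivalently, via the realization Theorem \ref{RealizationOfirreducible $C_1$-presentation}). First I would start from the hypothesised irreducible $C$-presentation $\langle x_1, \ldots, x_n \mid r_1, \ldots, r_{n-1}\rangle$ of $G$ (deficiency $1$ forces $n-1$ relations on $n$ generators) and form $G^* = G * \langle v\rangle$ by adjoining $v$ with no new relations, obtaining $\langle x_1, \ldots, x_n, v \mid r_1, \ldots, r_{n-1}\rangle$. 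I would check this is a genuine $C_1$-presentation: each $r_k$ is a conjugation relation whose conjugating word lies in $X^{\pm 1}\subset Y^{\pm 1}$ where $Y = X\sqcup\{v\}$, and the commutativity requirement on $V_1=\{v\}$ is vacuous. This presentation has deficiency $(n+1)-(n-1)=2$, so $\operatorname{def}(G^*)\geq 2$.

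Next I would establish $1$-irreducibility. Since every $x_i$ is conjugate to every other in $G$ and the conjugating elements survive in $G^*$, the $x_i$ remain mutually conjugate, so the presentation is $1$-irreducible by definition. Equivalently, I would compute abelianizations: irreducibility forces all $x_i$ to coincide in $G^{ab}$, so $G^{ab}$ is cyclic; since the abelianized relations are the differences $x_{i_k}-x_{j_k}$, we have $G^{ab}\cong \mathbb{Z}^n/\langle e_{i_k}-e_{j_k}\rangle$, which is torsion-free of rank equal to the number of connected components of the associated graph $\Gamma$. Cyclicity then gives $G^{ab}\cong \mathbb{Z}$, whence $(G^*)^{ab}\cong \mathbb{Z}\oplus\mathbb{Z}$, i.e. the free abelian group of rank $2$, confirming $G^*$ is a $1$-irreducible $C_1$-group.

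The hard part is the reverse inequality $\operatorname{def}(G^*)\leq 2$. Here I would invoke the realization already proved: the deficiency-$2$ presentation above is a $1$-irreducible $C_1$-presentation, so by Theorem \ref{RealizationOfirreducible $C_1$-presentation} the group $G^*$ is isomorphic to $\Pi_D$ for some $1$-circle marked Gauss diagram $D$ (the construction in that proof uses a single oriented circle for a one-component diagram). Proposition \ref{P: one-way} then bounds $\operatorname{def}(\Pi_D)\leq 2$, and combined with $\operatorname{def}(G^*)\geq 2$ this yields $\operatorname{def}(G^*)=2$. Alternatively, and this is the content underlying Proposition \ref{P: one-way}, I would argue directly from the standard bound $\operatorname{def}(G^*)\leq b_1(G^*)-d\big(H_2(G^*)\big)$, where $d$ denotes the minimal number of generators: here $b_1(G^*)=2$ and $H_2(G^*)\cong H_2(G)$ (since $H_2(\mathbb{Z})=0$ under the free-product decomposition), so the claim reduces to $H_2(G)=0$. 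This last vanishing is forced by the chain-complex count for the deficiency-$1$ irreducible $C$-presentation of $G$: there the identity $g-r=b_1(G)-\operatorname{rank} H_2$ together with $g-r=1$ and $b_1(G)=1$ gives $\operatorname{rank} H_2(G)=0$, hence $H_2(G)=0$.

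I expect the only genuine obstacle to be making this upper bound airtight, since the construction of the presentation and the verification of $1$-irreducibility are routine. Both routes to the bound rest on the same homological input, so I would present the argument via Proposition \ref{P: one-way} and Theorem \ref{RealizationOfirreducible $C_1$-presentation} for brevity, remarking that it parallels the reasoning already used in Example \ref{ex:Group-of-general-Gauss-diagram}, where a deficiency-$0$ irreducible $C$-presentation with $H_2(G)\neq 0$ produces $\operatorname{def}(G^*)=1$.
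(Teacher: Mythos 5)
Your proposal is correct and follows the route the paper intends: the corollary is stated without proof precisely because it is the concatenation of the evident deficiency-$2$ $1$-irreducible $C_1$-presentation of $G^*$ (giving $\operatorname{def}(G^*)\geq 2$) with Theorem \ref{RealizationOfirreducible $C_1$-presentation} and Proposition \ref{P: one-way} (giving $\operatorname{def}(G^*)\leq 2$), which is exactly your main argument. Your alternative direct bound via $H_2(G)=0$ is also sound, but it is the same homological computation that underlies Proposition \ref{P: one-way}, so it is not a genuinely different proof.
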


\section{Peripheral structures of $\Pi_D$}\label{S: Peripheral subgroup and peripheral structure}
It is well established that the knot group along with a peripheral subgroup and the meridian of a classical knot is a complete invariant in the class of classical knots up to the orientation of the knot and the ambient space. However, this is not the case for virtual knots (see, for example, \cite{Kauffman-1, GPV-1, Kim-1, BB-1}). In particular, if we consider the Kishino knot $K$, then the pair $(G_{\tilde{\varphi}_A}(K); (m,l))$ (refer to Remark \ref{R: one component link}) defined in \cite{BB-1} does not distinguish it from the trivial knot.
\medskip

In this section, we extend the notion of a meridian, longitude, peripheral subgroup and peripheral structure to the groups associated to marked Gauss diagrams and note down their properties modelled on the results of \cite[Subsection 3.3]{Kim-1}
\medskip

Let $D$ be a marked Gauss diagram and fix a base point on the $k^{th}$ circle of $D$ such that it does not lie at the end points of arrows and nodes. Then we define a {\it meridian} $m_k$ to be the generator of $\Pi_D$ corresponding to the arc over which the base point lies. We now describe a procedure to write a {\it longitude} $l_k$ corresponding to the meridian $m_k$. We start moving along the circle from the base point in the anticlockwise direction, and write $v_t^{\epsilon}$ when passing the tail of an arrow whose sign is $\epsilon$ and its head lies on the $t^{th}$ circle, and when passing the head of an arrow we use the following rule:
\begin{itemize}
\item if the sign of the arrow is $+1$, and the end point of its tail is the meeting point of the arcs $x_i$ and $x_{i+1}$ on the $n^{th}$ circle, then we write $v_n^{-1} x_i^{v_k v_n ^{-1}}$.
\item if the sign of an arrow is $-1$, and the end point of its tail is the meeting point of the arcs $x_i$ and $x_{i+1}$ on the $n^{th}$ circle, then we write  $v_n x_i ^{-1}$.
\end{itemize}

Furthermore, if we pass a node, then we write $v_{k}^{\epsilon}$, where $\epsilon$ is the sign of the node. On arriving at the base point, we write $m_k^{-\alpha}$, where $\alpha$ is the sum of signs of arrows whose head lies on the $k^{th}$ circle. Note that $l_k$ is an element of $\Pi_D$.
\par

\begin{remark}
Hereafter, throughout the paper by a marked Gauss diagram we mean a $1$-circle marked Gauss diagram.
\end{remark}
\par

Let $D$ be a given marked Gauss diagram, with $m$ being its meridian and $l$ the corresponding longitude. A \textit{peripheral pair} of a marked Gauss diagram $D$ is the pair $(m, l)$ and the \textit{peripheral subgroup} corresponding to the meridian $m$ of $D$ is the subgroup of $\Pi_D$ generated by $m$ and $l$.
Two pairs $(m,l)$ and $(m',l')$ are said to be \textit{conjugates} if there is an element $g$ in the group $\Pi_D$ such that $m'=m^g$ and $l'=l^g$. We then define the \textit{peripheral structure} as the conjugacy class of a peripheral pair of $D$.
\medskip

We now prove that the peripheral structure of a marked Gauss diagram $D$ is unique and invariant under the marked Reidemeister moves.
Let us consider a presentation $$\Pi_D= \langle x_1, \ldots, x_n, v~~|~~r_1=x_2^{-1}x_1^{w_1}, \ldots, r_n=x_1^{-1}x_n^{w_n} \rangle$$ of $\Pi_D$ which is written as per the procedure described in Section \ref{S: marked Gauss Diagrams}. If $x_1$ is a meridian of $D$, then $l=w_1\ldots w_n x_1^{-\alpha}$ is the corresponding longitude, where $\alpha$ is the sum of signs of chords in $D$.

\begin{proposition}
The peripheral pair and the peripheral subgroup of a marked Gauss diagram are unique up to conjugacy. Moreover, the peripheral structure is invariant under the marked Reidemeister moves.
\end{proposition}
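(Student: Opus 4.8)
The plan is to split the statement into two parts: well-definedness up to conjugacy (this covers both the peripheral pair and the peripheral subgroup at once, since conjugating a pair conjugates the subgroup it generates), and invariance under the marked Reidemeister moves. Throughout I work with the cyclic presentation $\Pi_D = \langle x_1, \ldots, x_n, v \mid x_{j+1}^{-1} x_j^{w_j},\ j \in \mathbb{Z}_n \rangle$ recorded just before the statement, so that each arc generator satisfies $x_{j+1} = x_j^{w_j}$ and, composing cyclically, $x_1 = x_1^{w_1 w_2 \cdots w_n}$. In particular $w_1 w_2 \cdots w_n$ centralises $x_1$, and since $x_1^{-\alpha}$ commutes with $x_1$, the longitude $l = w_1 \cdots w_n x_1^{-\alpha}$ commutes with the meridian $m = x_1$; this is the algebraic shadow of the classical fact that a meridian and its longitude generate an abelian peripheral subgroup, and it is worth recording although it is not strictly needed for the statement.

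First I would show that moving the base point changes $(m,l)$ only by conjugation. It suffices to treat the elementary move of pushing the base point across a single cut point, from arc $x_j$ to arc $x_{j+1}$; the meridian then becomes $m' = x_{j+1} = x_j^{w_j}$ and the longitude $l = w_j w_{j+1} \cdots w_{j-1} x_j^{-\alpha}$ read from $x_j$ becomes the cyclic shift $l'$ read from $x_{j+1}$, which starts at $w_{j+1}$. A short computation shows that conjugation by $w_j$ realises this: one has $m^{w_j} = x_{j+1} = m'$, and since $x_{j+1} = x_j^{w_j}$ yields $x_j^{-\alpha} w_j = w_j x_{j+1}^{-\alpha}$, one gets $l^{w_j} = l'$. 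Iterating over the cut points, any two base points produce conjugate peripheral pairs, so the peripheral pair, the peripheral subgroup, and the peripheral structure are all well-defined.

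For invariance I would exploit this freedom to place the base point in an arc disjoint from the region where a given marked Reidemeister move is performed. Then the meridian is literally the same generator before and after the move, the longitude words agree outside that region, and it remains to check move by move that the local contribution to $w_1 \cdots w_n x_1^{-\alpha}$ represents the same element of $\Pi_{D'} \cong \Pi_D$, the isomorphism $\Pi_D \cong \Pi_{D'}$ having already been noted in Section \ref{S: marked Gauss Diagrams}. For the moves of Figure \ref{F: AddGaussMoves} this is transparent: a cancelling pair of oppositely signed nodes contributes $v^{\epsilon} v^{-\epsilon} = 1$, and sliding a node along an arc does not change the order in which its letter is read. For the classical moves of Figure \ref{fig:GaussMoves}, the R2 and R3 cases reduce to rewriting the local word by the relations $x_{j+1} = x_j^{w_j}$, while for R1 one must also track the change of $\alpha$ by $\pm 1$ and verify that the new head/tail contributions absorb the extra $x_1^{\mp 1}$.

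The main obstacle I anticipate is precisely this last piece of bookkeeping. Unlike the classical longitude, here each crossing and each node decorates the longitude word with powers of $v$, and the R1 and node-cancellation moves alter both the number of arc generators and (for R1) the exponent $\alpha$; so the delicate point is to confirm that after reading the modified local word and re-expressing it in $\Pi_{D'}$, the net effect on the longitude is trivial up to the conjugation already permitted by the choice of base point. Each individual verification is a self-contained calculation using only $x_{j+1} = x_j^{w_j}$ and the commutativity of $v$, but assembling them uniformly across all the moves of Figures \ref{fig:GaussMoves} and \ref{F: AddGaussMoves} is where the real work lies.
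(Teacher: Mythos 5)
Your argument is essentially the paper's: for uniqueness the paper conjugates by the single word $g_1$ read from one base point to the other, writing $l_1=g_1g_2m_1^{-\alpha}$, $l_2=g_2g_1m_2^{-\alpha}$ and observing $l_2=l_1^{g_1}$, which is exactly your one-cut-point step iterated around the circle; and the invariance under the marked Reidemeister moves, which you outline and flag as the remaining bookkeeping, is likewise dismissed by the paper as ``easy to check'' without the case-by-case verification.
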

\begin{proof}
Let $D$ be a marked Gauss diagram. We first choose two meridians $m_1$, $m_2$ corresponding to two different arcs of $D$. By construction of the group $\Pi_D$, there exist elements $g_1$ and $g_2$ in $\Pi_D$ such that $m_1=m_2^{g_2}$, $m_2=m_1^{g_1}$, $l_1=g_1g_2m_1^{-\alpha}$ and $l_2=g_2g_1m_2^{-\alpha}$, where $l_1$ and $l_2$ are longitudes corresponding to meridians $m_1$ and $m_2$, respectively. It is not difficult to see that $l_2=l_1^{g_1}$ and so $(m_1, l_1)^{g_1}=(m_2,l_2)$. This implies that the peripheral pair and the peripheral subgroup of $D$ are unique up to conjugacy. Hence peripheral structure of $D$ is independent of choice of meridian. At last, it is easy to check the invariance of the peripheral structure under the marked Reidemeister moves.
\end{proof}

\begin{proposition}
A peripheral subgroup of $\Pi_D$ is abelian.
\end{proposition}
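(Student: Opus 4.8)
The plan is to exhibit the peripheral subgroup as being generated by two commuting elements, namely a meridian $m$ and its corresponding longitude $l$. By the preceding proposition the peripheral subgroup is unique up to conjugacy, and conjugation preserves commutativity, so it suffices to verify the claim for one convenient choice of meridian. I would take the presentation
$$\Pi_D= \langle x_1, \ldots, x_n, v~~|~~x_2^{-1}x_1^{w_1}, \ldots, x_1^{-1}x_n^{w_n} \rangle$$
recorded earlier in Section~\ref{S: Peripheral subgroup and peripheral structure}, set $m=x_1$, and use the explicit longitude $l=w_1w_2\cdots w_n\, x_1^{-\alpha}$, where $\alpha$ is the sum of the signs of the chords of $D$.

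First I would rewrite the defining relations in the equivalent form $x_{j+1}=x_j^{w_j}$ for $j\in\mathbb{Z}_n$ (cyclic indices) and compose them as one travels around the single circle. Iterating yields $x_{j+1}=x_1^{\,w_1w_2\cdots w_j}$, and one full loop produces the identity $x_1=x_1^{\,w_1w_2\cdots w_n}$. Writing $W=w_1w_2\cdots w_n$, this says precisely that $W^{-1}x_1W=x_1$, i.e.\ the meridian $m=x_1$ commutes with $W$ in $\Pi_D$. This cyclic composition is the crux of the argument; everything else is formal.

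Finally, since $l=Wx_1^{-\alpha}=Wm^{-\alpha}$ and $m$ commutes both with $W$ (just shown) and trivially with its own power $m^{-\alpha}$, we conclude $[m,l]=1$. Hence the peripheral subgroup $\langle m,l\rangle$ is generated by two commuting elements and is therefore abelian, which is the assertion.

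I do not anticipate a genuine obstacle: the only point demanding care is to confirm, directly from the construction of the longitude in Section~\ref{S: Peripheral subgroup and peripheral structure}, that the longitude word is exactly $Wm^{-\alpha}$ with $W$ the product $w_1\cdots w_n$ of the conjugating words that appear in the relations. Once this bookkeeping is in place, reading the cyclic relation as $x_1=x_1^{W}$ delivers the commutation, and hence the result, immediately.
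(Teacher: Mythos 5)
Your argument is correct and is essentially the paper's own proof: both reduce to the standard presentation via conjugacy-invariance of the peripheral subgroup, derive $x_1=x_1^{w_1\cdots w_n}$ by composing the cyclic relations, and conclude that the meridian commutes with $l=w_1\cdots w_n x_1^{-\alpha}$. Your write-up simply makes the cyclic composition step more explicit.
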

\begin{proof}
Using the previous proposition, it suffices to prove that the subgroup generated by the meridian $x_1$ and the corresponding longitude $l=w_1 \ldots w_n x_1^{-\alpha}$ is abelian. By considering relations in the presentation of $\Pi_D$, we have $x_1=x_1^{w_1 \ldots w_n}$. This implies that the meridian $x_1$ commutes with the longitude $l$.
\end{proof}

Let $G$ be a group presenting a $1$-irreducible $C_1$-presentation $\mathcal{P}=\langle x_1, \ldots, x_n, v~~|~~r_1, \ldots, r_m \rangle$. Let $G_v$ denotes the group with presentation $\langle x_1, \ldots, x_n, v~~|~~r_1, \ldots, r_m, v \rangle $, and for $g \in G$, $g_v$ denotes the image of $g$ in $G_v$. It is easy to observe that for any marked Gauss diagram $D$, the image $l_v$ of longitude $l$ belongs to the commutator subgroup of ${\Pi_D}_v$.

\begin{theorem}\label{sufficiency-of-longitude}
Let $G$ be a group presenting a $1$-irreducible $C_1$-presentation $\langle x_1, \ldots, x_n, v~~|~~r_1, \ldots, r_{n-1} \rangle$ of deficiency $2$ and $l$ an element of $G$. If the image of $l$ in $G_v$ belongs to the commutator subgroup of $G_v$, and $l$ commutes with some conjugate of $x_1$ say $x_0$, then $G$ is the group of a marked Gauss diagram with a peripheral pair $(x_0, l)$.
\end{theorem}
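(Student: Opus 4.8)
The plan is to reduce the statement to the realization procedure of Theorem \ref{RealizationOfirreducible $C_1$-presentation}, carrying the meridian and the longitude through the normalization of the presentation so that the longitude that is finally read off the diagram is exactly the prescribed element $l$. First I would normalize the meridian. Since $x_0$ is a conjugate of $x_1$, write $x_0 = x_1^g$ for some $g \in G$. Both hypotheses on $l$, namely that $[l, x_0]=1$ and that $l_v$ lies in the commutator subgroup of $G_v$, are invariant under simultaneous conjugation of the pair $(x_0,l)$, and peripheral pairs are only defined up to conjugacy. Hence, after replacing $(x_0,l)$ by $(x_1, l^{g^{-1}})$, I may assume $x_0=x_1$ and $[l,x_1]=1$. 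It is worth recording what the second hypothesis says concretely: as $G$ is $1$-irreducible, $H_1(G_v)\cong\mathbb{Z}$ is generated by the common image of the $x_i$, so $l_v$ lying in the commutator subgroup of $G_v$ is equivalent to the total $x$-exponent of $l$ being zero, which is exactly the numerical condition needed for the endpoints of the longitude to close up.

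Next I would normalize the presentation. By Corollary \ref{irreducible $C_1$-presentation-to-realizable-irreducible $C_1$-presentation}, transform the given presentation into a realizable irreducible $C_1$-presentation $\langle x_1,\dots,x_n,v \mid r_1,\dots,r_n\rangle$ with $r_j=x_{j+1}^{-1}x_j^{w_j}$ and each $w_j$ lying in the admissible set of Definition \ref{realizable irreducible $C_1$-presentation}, keeping $x_1$ as a distinguished arc generator (all arc generators being conjugate, this is harmless). In any such presentation the longitude attached to the meridian $x_1$ reads $l_0 = w_1 w_2\cdots w_n\, x_1^{-\alpha}$, where $\alpha$ is the sum of the signs of the chords, and the single closing relation is precisely $[x_1, w_1\cdots w_n]=1$, that is $[x_1,l_0]=1$.

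The heart of the argument, and the step I expect to be the main obstacle, is to arrange the cyclic presentation so that this longitude word becomes the prescribed $l$ rather than some uncontrolled $l_0$. The key point is that in the deficiency $2$ case the associated graph is a spanning tree (Proposition \ref{P: one-way}), so the closing relation $[x_1, w_1\cdots w_n]=1$ is a consequence of the remaining relations; this redundancy is the freedom that allows one to prescribe the longitude. Concretely, I would decompose the word $l x_1^{\alpha}$ into syllables, each lying in $\{v^{-\epsilon}x_i^{\epsilon},\, v^{\epsilon}\}$, and then reroute the cycle and insert conjugation detours and cancelling arrow--node pairs, all of which are moves that do not change the presented group, so that the resulting product of conjugating words $w_1\cdots w_n$ is exactly $l x_1^{\alpha}$. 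Here both hypotheses are used essentially: $[l,x_1]=1$ guarantees that the redundant closing relation $[x_1, l x_1^{\alpha}]=[x_1,l]=1$ still holds in $G$, so that no new group relation is imposed, while the vanishing of the $x$-exponent of $l$ pins down $\alpha$ and forces the arrow-heads and nodes to balance so that the diagram closes up. Verifying that these modifications leave the group isomorphic to $G$, that is, that prescribing the longitude costs nothing beyond the two hypotheses, is the delicate bookkeeping, and it runs parallel to the corresponding step in \cite{Kim-1}.

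Finally, with a realizable irreducible $C_1$-presentation of $G$ in hand whose longitude for the meridian $x_1$ is $l$, I would invoke the drawing procedure of Theorem \ref{RealizationOfirreducible $C_1$-presentation}: place the $n$ arcs on an anticlockwise circle and attach a chord or a node for each relation according to the two cases there. The resulting $1$-circle marked Gauss diagram $D$ satisfies $\Pi_D\cong G$, its meridian is $x_1$, and by construction its longitude is $l$, so its peripheral pair is $(x_1,l)$. Conjugating back by the element $g$ from the first step exhibits $(x_0,l)$ as a representative of the peripheral structure of $D$, which completes the proof.
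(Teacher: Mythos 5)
Your proposal is correct and follows essentially the same route as the paper: pass to a chain presentation, exploit the redundancy of the closing relation (justified by $[l,x_0]=1$) to prescribe the longitude, use the hypothesis that $l_v$ lies in the commutator subgroup of $G_v$ to make the chord-sign sum $\alpha$ vanish, and finish with Corollary \ref{irreducible $C_1$-presentation-to-realizable-irreducible $C_1$-presentation} and the realization theorem. The only difference is cosmetic: where you ``reroute the cycle and insert detours,'' the paper keeps $x_0$ as an extra generator of the chain $x_0 \to x_1 \to \cdots \to x_n$ and simply appends the single redundant relation $r_n = x_0^{-1}x_n^{(w_0 w_1 \cdots w_{n-1})^{-1}l}$, which forces the product of conjugating words to be $l$ in one step.
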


\begin{proof}
Since $x_0$ is conjugate to $x_1$ in $G$, there exists some $w$ in $G$ such that $r_0:=x_1^{-1}x_0^{w}=1$. Thus, $\mathcal{P}=\langle x_0,x_1, \ldots, x_n, v~~|~~r_0,r_1, \ldots, r_{n-1} \rangle$ is a presentation of the group $G$. We may assume that each relation in $\mathcal{P}$ is of the form $r_i=x_{i+1}^{-1}x_i^{w_i}$, $i=0,1, \ldots, n-1$. On adding a redundant relation $r_n=x_0^{-1}x_n^{(w_0 w_1 \ldots w_{n-1})^{-1}l}$ to the presentation $\mathcal{P}$, we get a cyclic irreducible $C_1$-presentation of $G$. By Proposition \ref{irreducible $C_1$-presentation-to-realizable-irreducible $C_1$-presentation}, we can assume that $\mathcal{P}$ is a realizable irreducible $C_1$-presentation. Thus, it is the group of a marked Gauss diagram with a peripheral pair $(x_0, l)$.
\end{proof}

As a consequence, we have the following results. 

\begin{corollary}
Let $G$ be a group with an irreducible $C$-presentation $\langle x_1, \ldots, x_n~~|~~r_1, \ldots, r_{n-1} \rangle$ of deficiency $1$ and $l$ an element of $G$. If $l$ belongs to the commutator subgroup of $G$ and commutes with some conjugate of $x_1$ in $G$ say $x_0$, then $G * \langle v \rangle$ is the group of a marked Gauss diagram with a peripheral pair $(x_0', l')$, where $x_0'$ and $l'$ are natural images of $x_0$ and $l$ in $G * \langle v \rangle$, respectively.
\end{corollary}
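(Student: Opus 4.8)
The plan is to deduce this directly from Theorem~\ref{sufficiency-of-longitude} by passing from $G$ to the free product $G^*=G*\langle v\rangle$. First I would take the given irreducible $C$-presentation $\langle x_1,\ldots,x_n\mid r_1,\ldots,r_{n-1}\rangle$ of $G$ of deficiency $1$ and regard the \emph{same} relations together with the extra free generator $v$ as a presentation
$$
G^* \;=\; G*\langle v\rangle \;=\; \langle x_1,\ldots,x_n,v \mid r_1,\ldots,r_{n-1}\rangle .
$$
This has $n+1$ generators and $n-1$ relations, hence deficiency $2$, and it is a $C_1$-presentation with $V_1=\{v\}$ (the commutativity relation among the $v_i$'s is vacuous since there is a single $v$). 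Because the $C$-presentation of $G$ is irreducible, all the $x_i$ are conjugate, so the associated graph $\Gamma_1$ is connected; thus this is a \emph{$1$-irreducible} $C_1$-presentation of deficiency $2$, which is exactly the hypothesis on $G^*$ required by Theorem~\ref{sufficiency-of-longitude}.

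Next I would verify the two conditions of that theorem for the natural image $l'$ of $l$ in $G^*$. Since $G$ sits inside $G^*$ as a free factor, the relation $[x_0,l]=1$ holding in $G$ persists in $G^*$, so $l'$ commutes with $x_0'$, and $x_0'$ remains a conjugate of $x_1$ in $G^*$. For the commutator condition I would use that $(G^*)_v$, obtained by adjoining the relation $v=1$, is precisely $G^*/\langle\!\langle v\rangle\!\rangle \cong G$; under this canonical identification the image of $l'$ in $(G^*)_v$ is simply $l$, which by hypothesis lies in the commutator subgroup of $G$. Hence the image of $l'$ lies in the commutator subgroup of $(G^*)_v$, as needed.

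With both hypotheses confirmed, Theorem~\ref{sufficiency-of-longitude} applies to $G^*$ and the element $l'$, and yields that $G^*$ is the group of a marked Gauss diagram with peripheral pair $(x_0',l')$, which is the assertion. I expect no genuine obstacle here: the entire content is the bookkeeping that adjoining a free generator $v$ raises the deficiency from $1$ to $2$ while preserving $1$-irreducibility, together with the observation that killing $v$ recovers $G$, so that the ``commutator subgroup of $G_v$'' hypothesis of the theorem matches the given ``commutator subgroup of $G$'' hypothesis exactly. The one point worth stating carefully is this isomorphism $(G^*)_v\cong G$, since it is what makes the translation of the two conditions transparent.
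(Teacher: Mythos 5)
Your proposal is correct and is exactly the deduction the paper intends: the corollary is stated as an immediate consequence of Theorem~\ref{sufficiency-of-longitude}, obtained by viewing $\langle x_1,\ldots,x_n,v\mid r_1,\ldots,r_{n-1}\rangle$ as a $1$-irreducible $C_1$-presentation of $G*\langle v\rangle$ of deficiency $2$ and noting $(G*\langle v\rangle)_v\cong G$. Your explicit verification of the two hypotheses (persistence of $[x_0,l]=1$ in the free product, and the identification of the commutator-subgroup condition under killing $v$) is the right bookkeeping and matches the paper's approach.
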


\begin{corollary}
Let $G$ be a group with a $1$-irreducible $C_1$-presentation of deficiency $2$. Then $G$ is the group of a marked Gauss diagram with trivial longitude. In particular, if $K$ is a classical knot, then $G_S(K)$ is the group of some marked Gauss diagram with a trivial longitude.
\end{corollary}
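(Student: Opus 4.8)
The plan is to obtain this corollary as the special case $l = 1$ of Theorem \ref{sufficiency-of-longitude}. First I would observe that the identity element $1 \in G$ vacuously satisfies both hypotheses of that theorem: its image in $G_v$ is the identity and hence lies in the commutator subgroup of $G_v$, and $1$ commutes with every element of $G$, in particular with $x_1$. Taking $x_0 = x_1$, which is trivially a conjugate of $x_1$, Theorem \ref{sufficiency-of-longitude} then produces a marked Gauss diagram whose associated group is $G$ and whose peripheral pair is $(x_1, 1)$. Since the longitude of this pair is the identity, the resulting diagram has trivial longitude, which is exactly the first assertion.

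For the classical-knot statement, I would first place $G_S(K)$ inside the required framework. Using Remark \ref{R: one component link}, \cite[Proposition 4]{BB-1} and Theorem \ref{isomorphism-of-G_M-and-G_S}, one has $G_S(K) \cong \pi_1(\mathbb{S}^3 - K) * \langle v \rangle$. The Wirtinger presentation of the classical knot group $\pi_1(\mathbb{S}^3 - K)$ is an irreducible $C$-presentation of deficiency $1$, since every Wirtinger relation has the conjugation form $w^{-1} x_i w = x_j$ and all meridian generators are mutually conjugate. By the corollary immediately preceding this one, freely multiplying such a group by $\langle v \rangle$ yields a $1$-irreducible $C_1$-group of deficiency $2$; hence $G_S(K)$ admits a $1$-irreducible $C_1$-presentation of deficiency $2$.

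With this presentation established, the first part of the corollary applies directly and yields a marked Gauss diagram whose group is $G_S(K)$ and whose longitude is trivial.

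I do not anticipate a genuine obstacle, since the substantive work is carried out in Theorem \ref{sufficiency-of-longitude}. The only point demanding care is the bookkeeping that situates $G_S(K)$ correctly as a $1$-irreducible $C_1$-group of deficiency $2$, namely recognizing the Wirtinger presentation as an irreducible $C$-presentation of deficiency $1$ and accounting for the extra free factor $\langle v \rangle$.
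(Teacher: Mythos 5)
Your proposal is correct and matches the paper's intent: the corollary is stated there as an immediate consequence of Theorem \ref{sufficiency-of-longitude}, obtained exactly by taking $l=1$ and $x_0=x_1$, and the classical-knot case follows from $G_S(K)\cong \pi_1(\mathbb{S}^3-K)*\langle v\rangle$ together with the Wirtinger presentation, just as you argue. The only nitpick is that the fact that $G*\langle v\rangle$ is a $1$-irreducible $C_1$-group of deficiency $2$ is the corollary at the end of Section \ref{S: Realization-of-irreducible $C_1$-group}, not the one immediately preceding this statement.
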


\begin{remark}
	In \cite[Corollary $9$]{Kim-1}, it is proved that there exists a non-trivial virtual knot $K$ with a trivial longitude in the group $G_0(K)$. We do not know an example of a non-trivial virtual knot $K'$ having a trivial longitude in the group $G_S(K')$. But using the above corollary, one can construct a non-trivial marked Gauss diagram with a trivial longitude. 
\end{remark}

\medskip 

\section{Peripherally specified homomorphs}\label{S: Peripherally specified homomorphs}
The {\it weight} of a group $G$ is defined as the minimum number of elements required to normally generate $G$. It was asked in \cite{Neuwirth-1} whether a finitely generated group of weight one is a homomorph $($homomorphic image$)$ of a knot group. This was answered positively in \cite{Acuna-1, Johnson-1} where it was proved that for any element $\mu$ in a group $G$ which is finitely generated by the conjugates of $\mu$, there exists a knot $K$ in $\mathbb{S}^3$  and an onto homomorphism $\rho: \pi_1(\mathbb{S}^3 - K) \to G$ such that $\rho(m)=\mu$, where $m$ is a meridian of $K$.
\medskip

Necessary and sufficient conditions for a pair of elements in the symmetric group $S_n$ to be realized as the image of a meridian-longitude pair for some knot $K$ in $\mathbb{S}^3$ can be found in \cite{EL-1}, which was later extended to knot group representations into general groups \cite{JL-1}. Later on,  Kim \cite{Kim-1} extended these results to homomorphs of virtual knot groups $G_0(K)$.
\medskip

In this section, we investigate the following analogous problem.

\begin{problem}
Let $G$ be a group, and $\mu$ and $\nu$ be elements in $G$. Does there exist a marked Gauss diagram $D$ and an onto homomorphism $\rho: \Pi_D \to G$ such that $\rho(m)=\mu$ and $\rho(v)=\nu$, where $m$ is a meridian of $D$ and $v$ corresponds to the component of $D$?
\end{problem}

It is easy to see that $G$ must be finitely generated by $\nu$ and conjugates of $\mu$. We fix $\mu$ and $\nu$ in $G$ with these properties. We say that $\lambda \in G$ is realizable if there is a marked Gauss diagram $D$ and a representation $\rho: \Pi_D \to G$ with above properties such that $\rho(l)=\lambda$, where $l$ is the longitude of $D$ corresponding to the meridian $m$. We denote the set of realizable elements by $\Lambda_G$ and prove the following result.

\begin{theorem}\label{NonEmptyRealizableSet}
The set $\Lambda_G$ is non-empty.
\end{theorem}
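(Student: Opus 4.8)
The plan is to show that the trivial element $1 \in G$ is realizable, which immediately gives $\Lambda_G \neq \emptyset$. The guiding observation is that the definition of $\Lambda_G$ only requires an \emph{onto} homomorphism $\rho : \Pi_D \to G$, not an isomorphism; this gives the freedom to build a large, tautological group $H$ carrying a $1$-irreducible $C_1$-presentation that surjects onto $G$, and then to invoke the realization machinery of Section \ref{S: Realization-of-irreducible $C_1$-group}.

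First I would fix a finite generating set for $G$ of the prescribed shape. Since $G$ is finitely generated by $\nu$ and conjugates of $\mu$, I can write $G = \langle \nu, c_1, \ldots, c_n \rangle$ with $c_1 = \mu$ and $c_i = \mu^{a_i}$ for suitable $a_i \in G$. Defining $\rho$ first on the free group $F = F(x_1, \ldots, x_n, v)$ by $x_i \mapsto c_i$ and $v \mapsto \nu$, the map $F \to G$ is already onto, so for each $i$ I may choose a word $W_i$ in these generators with $\rho(W_i) = a_i$. I then set
$$
H = \langle x_1, \ldots, x_n, v \mid x_2 = x_1^{W_2}, \ldots, x_n = x_1^{W_n} \rangle.
$$
Each relation is of $C_1$-type, since it conjugates $x_1$ to $x_i$, and the associated graph $\Gamma_1$ is a star centred at $x_1$, hence connected, so $H$ is a $1$-irreducible $C_1$-group; with $n+1$ generators and $n-1$ relations it has deficiency $2$. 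By construction $\rho$ descends to a surjection $H \to G$, because $\rho(x_1^{W_i}) = \mu^{a_i} = c_i = \rho(x_i)$ and the images $c_i, \nu$ generate $G$.

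Next I would produce the diagram. Taking $l = 1 \in H$, the image of $l$ in $H_v$ lies in the commutator subgroup and $l$ commutes with $x_1$ trivially, so the hypotheses of Theorem \ref{sufficiency-of-longitude} are met with $x_0 = x_1$. That theorem, through Corollary \ref{irreducible $C_1$-presentation-to-realizable-irreducible $C_1$-presentation} and Theorem \ref{RealizationOfirreducible $C_1$-presentation}, yields a marked Gauss diagram $D$ with $\Pi_D \cong H$ and peripheral pair $(x_1, 1)$, so the meridian is $m = x_1$ and its longitude is $l = 1$. Composing with $\rho$ gives an onto homomorphism $\rho : \Pi_D \to G$ with $\rho(m) = \mu$, $\rho(v) = \nu$ and $\rho(l) = 1$; thus $1 \in \Lambda_G$.

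The only genuinely delicate point is the bookkeeping that $H$ is \emph{simultaneously} a valid $C_1$-presentation, $1$-irreducible (the star graph must be connected with a single component), and of deficiency exactly $2$, so that Theorem \ref{sufficiency-of-longitude} applies verbatim; the choice of the words $W_i$ as $\rho$-preimages of the $a_i$ avoids any circularity, since $\rho$ is defined on the free group before $H$ is formed. Everything else is a direct application of the results already established, and the argument in fact proves the stronger statement that the trivial longitude always lies in $\Lambda_G$.
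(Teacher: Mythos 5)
Your proof is correct, but it takes a genuinely different route from the paper's. The paper works with a \emph{cyclic} presentation of deficiency $1$: it writes $G=\langle\mu_1,\dots,\mu_n,\nu\rangle$ with $\mu_1=\mu$, arranges (via the transformation steps of Propositions \ref{irreducible $C_1$-presentation-to-cyclic irreducible $C_1$-presentation} and \ref{cyclic irreducible $C_1$-presentation-to-realizable-irreducible $C_1$-presentation}) that the relations $\mu_{i+1}=\mu_i^{w_i}$, $i\in\mathbb{Z}_n$, are already in realizable form, builds the diagram $D$ directly from Theorem \ref{RealizationOfirreducible $C_1$-presentation}, and defines the surjection $x_i\mapsto\mu_i$, $v\mapsto\nu$ by hand; it never identifies which $\lambda$ is realized, only that some $\lambda$ is. You instead build a deficiency-$2$ star presentation $H$ surjecting onto $G$ and feed it to Theorem \ref{sufficiency-of-longitude} with $l=1$, which gives the sharper conclusion $1\in\Lambda_G$. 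Both arguments are valid. What your route buys is the explicit statement that the trivial longitude is always realizable (which the paper only recovers indirectly once Theorem \ref{lambda-is-a-subgroup} shows $\Lambda_G$ is a subgroup); what it costs is a dependence on the heavier machinery of Theorem \ref{sufficiency-of-longitude}, in particular on the implicit claim there that the passage from a cyclic to a realizable presentation preserves the peripheral pair, whereas the paper's proof needs only the bare realization theorem and exhibits the homomorphism concretely. Your construction of $H$ itself is sound: the $W_i$ are chosen as preimages under the map already defined on the free group, so there is no circularity, the star graph is connected so $H$ is $1$-irreducible, and the relation count gives the required deficiency-$2$ presentation.
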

\begin{proof}
Let $G=\langle \mu_1, \ldots, \mu_n, \nu \rangle$, where $\mu_1=\mu$ and $\mu_i=\mu_1^{w_i}$, $2\leq i \leq n$ and $w_i$ are words in $\mu_1^{\pm 1}, \ldots, \mu_n^{\pm 1}, \nu^{\pm 1}$. Using the techniques described in the proofs of theorems \ref{irreducible $C_1$-presentation-to-cyclic irreducible $C_1$-presentation} and \ref{cyclic irreducible $C_1$-presentation-to-realizable-irreducible $C_1$-presentation}, we can assume the following in $G$.
\begin{itemize}
\item $\mu_{i+1}=\mu_i^{w_i}$ for $ i \in \mathbb{Z}_n$.
\item Each $w_i$ is either $\nu^{\epsilon}$ or $\nu^{-\epsilon} \mu_j^{\epsilon}$, where $\epsilon=\pm1$. If $w_k= \nu^{-1} \mu_p$, then $w_p=\nu$ and if $w_k= \nu \mu_p^{-1}$, then $w_p=\nu^{-1}$. Moreover, $w_j \neq \nu^{-\epsilon} \mu_p^{\epsilon}, \nu^{-\epsilon} \mu_k^{\epsilon}$, where $k \neq j$.
\end{itemize}
By Theorem \ref{RealizationOfirreducible $C_1$-presentation}, we construct a marked Gauss diagram $D$ corresponding to the following realizable irreducible $C_1$-presentation
$$
\langle x_1, \ldots, x_n, v~~|~~x_{2}^{-1}x_1^{u_1}, \ldots, x_1^{-1}x_n^{u_n} \rangle,
$$ where $u_j$ is obtained by replacing $\mu_i$ and $\nu$ in $w_j$ with $x_i$ and $v$, respectively for every $1 \leq i,j \leq n$. Clearly, we have a well-defined onto homomorphism $\rho: \Pi_D \to G$ mapping $x_1$ to $\mu_1$ and $v$ to $\nu$.
\end{proof}

\par
Let us now consider two marked Gauss diagrams $D_1$ and $D_2$. Let $p_1$ and $p_2$ be two points on $D_1$ and $D_2$, respectively, which are not meeting any chord or node. The \textit{connected sum} $D$ of $D_1$ and $D_2$ at $p_1$ and $p_2$ is a marked Gauss diagram obtained by removing a small interval around $p_1$ and $p_2$ not intersecting a chord or a node, and then joining the end points of remaining diagrams while respecting the orientations. Let $\Pi_{D_1}=\langle x_1, \ldots, x_n, v_1~~|~~x_2^{-1}x_1^{\alpha_1}, \ldots, x_1^{-1}x_n^{\alpha_n} \rangle$ and
$\Pi_{D_2}=\langle y_1, \ldots, y_m, v_2~~|~~y_2^{-1}y_1^{\beta_1}, \ldots, y_1^{-1} y_m^{\beta_m} \rangle$ be group presentations of $D_1$ and $D_2$, respectively. If $p_1$ and $p_2$ are on the arcs $x_1$ and $y_1$, respectively, then a group presentation of $\Pi_D$ is
$$\langle x_1, \ldots, x_n, y_1, \ldots, y_m, v~~|~~ x_2^{-1} x_1^{{\alpha}'_1}, \ldots, x_n^{-1}x_{n-1}^{\alpha'_{n-1}}, y_1^{-1} x_n^{\alpha'_n}, y_2^{-1}y_1^{\beta'_1}, \ldots, y_m^{-1}y_{m-1}^{\beta'_{m-1}}, x_1^{-1}y_m^{\beta'_m} \rangle.$$ Note that here $\alpha'_i$ and $\beta_j'$ are obtained from $\alpha_i$ $( 1 \leq i \leq n)$ and $\beta_j$ $(1 \leq j \leq m)$ by replacing $v_1$ and $v_2$ by $v$.

\begin{example}
Figure \ref{F: ConnectedSum} shows the marked Gauss diagram $D_3$ as a connected sum of two non-trivial marked Gauss diagrams $D_1$ and $D_2$ at $p_1$ and $p_2$. It is easy to check that $\Pi_{D_3} \cong \mathbb{Z} * \mathbb{Z}$. Let us now consider $T$ to be a marked Gauss diagram without chords and nodes, then $\Pi_T \cong \mathbb{Z} * \mathbb{Z}$. However, it can be easily seen that $D_3$ is not equivalent to $T$.

\begin{figure}[H]
	\centering
	\includegraphics[scale=1.2]{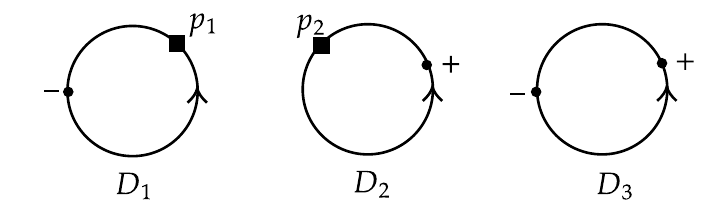}
\caption{$D_3$ as the connected sum of $D_1$ and $D_2$ at points $p_1$ and $p_2$.} \label{F: ConnectedSum}
\end{figure}
\end{example}

Our final result shows that the non-empty set $\Lambda_G$ is, in fact, a subgroup of $G$.

\begin{theorem}\label{lambda-is-a-subgroup}
The set $\Lambda_G$ is a subgroup of $G$.
\end{theorem}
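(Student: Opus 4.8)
The plan is to apply the subgroup criterion. By Theorem~\ref{NonEmptyRealizableSet} the set $\Lambda_G$ is non-empty, so it remains to show that $\Lambda_G$ is closed under multiplication and under taking inverses; closure under both then yields $\lambda\lambda^{-1}=1\in\Lambda_G$ and hence that $\Lambda_G$ is a subgroup of $G$.

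For closure under multiplication I would use the connected sum $D=D_1\# D_2$ of marked Gauss diagrams, whose presentation is recorded above. Suppose $\lambda_1,\lambda_2\in\Lambda_G$ are realized by onto homomorphisms $\rho_i:\Pi_{D_i}\to G$ sending the meridian to $\mu$, the component generator to $\nu$, and the longitude $l_i$ to $\lambda_i$. Forming the connected sum at the meridian arcs $x_1$ of $D_1$ and $y_1$ of $D_2$, define $\rho:\Pi_D\to G$ by $\rho(x_i)=\rho_1(x_i)$, $\rho(y_j)=\rho_2(y_j)$ and $\rho(v)=\nu$. The first step is to verify that $\rho$ respects the two spliced relations $y_1^{-1}x_n^{\alpha'_n}$ and $x_1^{-1}y_m^{\beta'_m}$: this holds because the relations of $D_1$ (resp. $D_2$) that were broken by the connected sum forced $\mu$ to equal the relevant conjugate of $\rho_1(x_n)$ (resp. $\rho_2(y_m)$), while every remaining relation of $\Pi_D$ is inherited verbatim from $D_1$ or $D_2$; the map $\rho$ is clearly onto. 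Reading the longitude of $D$ at the connected-sum base point yields the conjugating words of $D_1$ followed by those of $D_2$, so that $\rho(l)=(\lambda_1\mu^{s_1})(\lambda_2\mu^{s_2})\mu^{-(s_1+s_2)}$, where $s_i$ is the sum of the signs of the chords of $D_i$. The decisive point is that $\mu$ commutes with $\lambda_2$: by the proposition asserting that peripheral subgroups of $\Pi_{D_2}$ are abelian, the meridian commutes with $l_2$ in $\Pi_{D_2}$, whence $\mu$ commutes with $\lambda_2$ in $G$. This collapses the expression to $\rho(l)=\lambda_1\lambda_2$, so $\lambda_1\lambda_2\in\Lambda_G$.

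For closure under inversion I would introduce a reversal operation $D\mapsto\overbar{D}$, obtained by reversing the orientation of the circle together with a matching change of the arrows (interchanging head and tail) and of the signs of the chords and nodes. The aim is to produce an isomorphism $\Pi_{\overbar{D}}\to\Pi_D$ that fixes $v$, carries the meridian to $m$, and sends the longitude of $\overbar{D}$ to $l^{-1}$ up to conjugation by a power of $m$ (which is harmless, since $m$ and $l$ commute). Pre-composing a realization of $\lambda$ with this isomorphism then realizes $\lambda^{-1}$, giving $\lambda^{-1}\in\Lambda_G$. I expect this inversion step to be the main obstacle: one must fix the exact combination of orientation reversal and sign changes so that a traversal of $\overbar{D}$ reads off precisely the formal inverse of the longitude word of $D$, and then check that the induced correspondence on generators is genuinely an isomorphism compatible with the meridian and with $v$. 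Once both closure properties are in hand, $\Lambda_G$ is a subgroup of $G$, as claimed.
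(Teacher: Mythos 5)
Your proposal takes essentially the same route as the paper: it realizes $\lambda_1\lambda_2$ by the connected sum of $D_1$ and $D_2$ along the meridian arcs and $\lambda^{-1}$ by a reversed diagram, and your explicit collapse of $\rho(l)=\lambda_1\mu^{s_1}\lambda_2\mu^{-s_1}$ to $\lambda_1\lambda_2$ via the commutativity of the meridian with the longitude is a detail the paper leaves implicit. The only divergence is in the reversal step, where the paper's recipe is simply to reverse the orientation of the circle and the signs of all chords and nodes (heads and tails of arrows are not interchanged); this is the combination you were seeking.
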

\begin{proof}
Let $\lambda_1, \lambda_2$ be two elements of $\Lambda_G$. Since $\lambda_1, \lambda_2$ are realizable, there exist marked Gauss diagrams $D_1$, $D_2$ and onto homomorphisms $\rho_1: \Pi_{D_1} \to G$, $\rho_2: \Pi_{D_2} \to G$ such that $\rho_1(x_1)=\rho_2(y_1)=\mu$, $\rho_1(v_1)=\rho_2(v_2)=\nu$ and $\rho_1(l_1)=\lambda_1$, $\rho_2(l_2)=\lambda_2$, where $(x_1,l_1)$ and $(y_1,l_2)$ are meridian-longitude pairs of $D_1$ and $D_2$, respectively. Let $D$ be a connected sum of $D_1$ and $D_2$ made over the points lying on the arcs $x_1$ and $y_1$. Define a map $\rho: \Pi_D \to G$ such that $\rho(x_i)=\rho_1(x_i)$, $\rho(y_j)=\rho_2(y_j)$ and $\rho(v)=\nu$ for every $1 \leq i \leq n$ and $1 \leq j \leq m$. It is easy to see that the map $\rho$ is a well-defined onto homomorphism and that $\rho(l)=\lambda_1 \lambda_2$, where $l$ is the longitude corresponding to the meridian $x_1$ for $D$. Let $\overline{D}_1$ be the marked Gauss diagram obtained from $D_1$ by reversing the orientation of the circle and signs of chords and nodes. If $\bar{x}_i$ denote the arc in $\overline{D}_1$ which was labelled $x_i$ in $D_1$, then the map $\bar{\rho}_1: \Pi_{\overline{D}_1} \to G$ defined by $\bar{\rho}_1(\bar{x}_i)=\rho_1(x_i)$ is well-defined and $\bar{\rho}_1(l)=\lambda_1^{-1}$, where $l$ is the longitude of $\overline{D}_1$ corresponding to the meridian $\bar{x}_1$ for $\overline{D}_1$. This completes the proof.
\end{proof}

\medskip

\section{Summary and questions}\label{S: Problems}
In this paper, we have mainly introduced the notion of marked Gauss diagrams and marked virtual link diagrams. The motivation behind marked Gauss diagrams is to extend the domain of virtual link groups introduced in this paper to knot-like diagrams.
We have studied the algebraic results of the groups associated to marked Gauss (link) diagrams by keeping the paper \cite{Kim-1} as an important point of reference. Here we collect some open questions for future work in this area.

\begin{itemize}
	\item Does there exist a representation of $VB_n$ which is not equivalent to a virtually symmetric representation? If the answer is no, then in some sense, it means that virtual crossings do not affect the invariants arising from virtual braid group representations. 
	\item It would be interesting to compare the peripheral structures of virtual link groups introduced in this paper with the peripheral structure of virtual link groups introduced by Kauffman \cite{Kauffman-1} in terms of their ability to distinguish virtual links.
	\item Analogous to the concept of abstract link diagrams for virtual links (see \cite{KK-1}), we expect that marked virtual link diagrams can be interpreted as marked link diagrams on surfaces. It would be interesting to know whether marked virtual link diagrams can be interpreted as some kind of codimension two embeddings.
	\item Under what conditions an $m$-irreducible $C_m$-group, $m \geq 2$, can be realized as the group of a marked Gauss diagram?
\end{itemize}

\begin{ack}
Valeriy G. Bardakov is supported by Ministry of Science and Higher Education of Russia (agreement No. 075-02-2021-1392) and the Russian Science Foundation grant 19-41-02005. Mikhail V. Neshchadim is supported by the Russian Science Foundation grant 19-41-02005. Manpreet Singh was supported by IISER Mohali for the PhD Research fellowship. Manpreet Singh also thanks to his supervisor Dr. Mahender Singh for giving him the opportunity to attend VI Russian-Chinese Conference on Knot Theory and Related Topics at NSU (Novosibirsk) and 2nd International Conference on Groups and Quandles in low-dimensional topology at TSU (Tomsk) using his grant, where he had discussions with the first two authors. His visit to Russia was supported by the DST grant INT/RUS/RSF/P-02.
\end{ack}

\newpage

\end{document}